\newtheorem{thm}{Theorem}[section]
\newtheorem{lem}[thm]{Lemma}
\newtheorem{cor}[thm]{Corollary}
\newtheorem{pro}[thm]{Proposition}
\newtheorem{ex}[thm]{Example}
\newtheorem{rmk}[thm]{Remark}
\newtheorem{defi}[thm]{Definition}
\newcommand {\emptycomment}[1]{}
\newcommand{\lon }{\,\rightarrow\,}
\newcommand{\be }{\begin{equation}}
\newcommand{\ee }{\end{equation}}
\newcommand{\g}{\mathfrak g}
\newcommand{\h}{\mathfrak h}
\newcommand{\huaS}{\mathcal{S}}
\newcommand{\huaG}{\mathcal{G}}
\newcommand{\huaO}{{\mathcal{O}}}
\newcommand{\frke}{\mathfrak e}
\newcommand{\half}{\frac{1}{2}}
\newcommand{\Id}{{\rm{Id}}}
\newcommand{\br}[1]{   [ \cdot,    \cdot  ]   }
\newcommand{\Hom}{\mathrm{Hom}}
\newcommand{\gl}{\mathfrak {gl}}
\newcommand{\Op}{$\mathcal O$-operator}
\newcommand{\Ops}{$\mathcal O$-operators}
\newcommand{\End}{\mathrm{End}}
\newcommand{\ad}{\mathrm{ad}}
\newcommand{\K}{\mathbb{K}}
\newcommand{\U}{\mathrm{U}}
\newcommand{\Li}{\mathsf{3Lie}}
\begin{document}

\title{Twilled 3-Lie algebras, generalized matched pairs of 3-Lie algebras and $\huaO$-operators}

\author{Shuai Hou}
\address{Department of Mathematics, Jilin University, Changchun 130012, Jilin, China}
\email{hshuaisun@163.com}

\author{Yunhe Sheng}
\address{Department of Mathematics, Jilin University, Changchun 130012, Jilin, China}
\email{shengyh@jlu.edu.cn}

\author{Rong Tang}
\address{Department of Mathematics, Jilin University, Changchun 130012, Jilin, China}
\email{tangrong@jlu.edu.cn}


\begin{abstract}
In this paper, first we introduce the notion of a twilled 3-Lie algebra, and construct an $L_\infty$-algebra, whose Maurer-Cartan elements give rise to new twilled 3-Lie algebras by twisting. In particular, we recover the   Lie $3$-algebra  whose Maurer-Cartan elements are $\huaO$-operators (also called relative Rota-Baxter operators) on 3-Lie algebras. Then we introduce the notion of   generalized matched pairs of 3-Lie algebras using generalized representations of 3-Lie algebras, which will give rise to   twilled 3-Lie algebras. The usual matched pairs of 3-Lie algebras correspond to a special class of twilled 3-Lie algebras, which we call strict twilled 3-Lie algebras.  Finally, we use $\huaO$-operators to construct explicit twilled 3-Lie algebras, and explain why an $r$-matrix for a 3-Lie algebra can not give rise to a double construction 3-Lie bialgebra. Examples of twilled 3-Lie algebras are given to illustrate the various interesting phenomenon.
\end{abstract}

\subjclass[2010]{17B62, 17B40, 17A42}

\keywords{Twilled $3$-Lie algebra, $L_\infty$-algebra, $\huaO$-operator, generalized matched pair of 3-Lie algebras}

\maketitle

\tableofcontents

\allowdisplaybreaks


\section{Introduction}

 3-Lie
algebras and more generally, $n$-Lie
algebras (also called Filippov algebras)~\cite{Filippov}, have attracted attention
from both mathematics and physics. The $n$-Lie algebra is the algebraic
structure corresponding to Nambu mechanics \cite{N}. See the review article \cite{review,Makhlouf} for more details. The notion of an $\huaO$-operator on a 3-Lie algebra with respect to a representation was introduced in \cite{BGS-3-Bialgebras} to construct solutions of the classical 3-Lie Yang-Baxter equation. When the representation is the adjoint representation, one obtain Rota-Baxter operators on 3-Lie algebras introduced in \cite{BaiRGuo}. In \cite{THS}, a Lie 3-algebra was constructed to characterize $\huaO$-operators on 3-Lie algebras as Maurer-Cartan elements. Moreover, cohomologies and deformations of $\huaO$-operators on 3-Lie algebras were also studied there.

Various twilled algebras were studied in the literature. Roughly speaking, a twilled algebra is an algebra that decomposed into two subalgebras (as direct sum of vector spaces). Twilled Lie algebras were studied in  \cite{Kosmann,Kosmann-1,Lu} with applications to Lie bialgebras and  integrable  systems.  Twilled associative algebras were studied in \cite{CGM,Uchino} with connections to quantization and Rota-Baxter type operators. The notion of a twilled Leibniz algebra was introduced in  \cite{ST} in the study of Leibniz bialgebras.  Twilled algebras are closely related with the twisting theory \cite{Drinfeld,Uchino}. One common property for the above twilled algebras is that they are equivalent to matched pairs. As we will see, this property does not hold anymore for 3-Lie algebras.


In this paper, we introduce the notion of a twilled 3-Lie algebra, which is a 3-Lie algebra that being the direct sum of   subspaces underlying two subalgebras. This condition is also equivalent to the existence of a product structure on the 3-Lie algebra \cite{Sheng-Tang}. We construct an $L_\infty$-algebra from a twilled 3-Lie algebra, and recover the Lie 3-algebra given in \cite{THS} when one consider the semidirect product 3-Lie algebra. Different from the cases of twilled associative algebras, twilled Lie algebras and twilled Leibniz algebras, a twilled 3-Lie algebra is not equivalent to a (usual) matched pair of 3-Lie algebras anymore. This force us to define a strict twilled 3-Lie algebra, which is equivalent to a matched pair of 3-Lie algebra. This also motivate us to define a generalized matched pair of 3-Lie algebras using the generalized representations introduced in \cite{Jiefeng}. It follows that a generalized matched pair of 3-Lie algebras gives rise to a twilled 3-Lie algebra. However, the converse is still not true. We also study the twisting theory of twilled 3-Lie algebras. We show that the twisting of a twilled 3-Lie algebra by a Maurer-Cartan element of the aforementioned  $L_\infty$-algebra is still a twilled 3-Lie algebra. This allows us to construct explicit twilled 3-Lie algebras using $\huaO$-operators on 3-Lie algebras.

The bialgebra theory for 3-Lie algebras was developed  using different approaches \cite{BGS-3-Bialgebras,BRP,BRP-1,HBS,RS}.
  In particular, two kinds of bialgebras, namely a local cocycle 3-Lie bialgebra  and a double construction 3-Lie bialgebra  were introduced in \cite{BGS-3-Bialgebras}, and an $r$-matrix for a 3-Lie algebra gives rise to the former, while the latter is equivalent to a Manin triple (a special matched pair) of 3-Lie algebras. However, it is still unclear there why an $r$-matrix for a 3-Lie algebra can not give rise to a double construction 3-Lie bialgebra. The study in this paper answer this question intrinsically. This is because as aforementioned, different from the binary case, a twilled 3-Lie algebra  is not equivalent to a matched pair of 3-Lie algebras anymore, while  as a special $\huaO$-operator, an  $r$-matrix can only give  rise to a twilled 3-Lie algebra. This suggest that there might be a more general   bialgebra theory for 3-Lie algebras. We will explore this direction in a future work.

The paper is organized as follows. In Section \ref{sec:L}, we introduce the notion of a twilled 3-Lie algebra, and construct an $L_\infty$-algebra using Uchino's higher derived brackets. In Section \ref{sec:GM}, we introduce the notion of a generalized matched pair of 3-Lie algebras and show that a generalized matched pair gives rise to a twilled 3-Lie algebra. We further introduce the notion of a strict twilled 3-Lie algebra and show that it is equivalent to a (usual) matched pair of 3-Lie algebras. In Section \ref{sec:T}, we study twisting of twilled 3-Lie algebras, and show that the twisting of a twilled 3-Lie algebra by a Maurer-Cartan element of the $L_\infty$-algebra given in Section \ref{sec:L} is still a twilled 3-Lie algebra. Finally in Section \ref{sec:OT}, we construct twilled 3-Lie algebras using $\huaO$-operators on 3-Lie algebras and give examples.

\vspace{2mm}

In this paper, we work over an algebraically closed field $\K$ of characteristic 0 and all the vector spaces are over $\K$ and finite-dimensional.

\vspace{2mm}
\noindent
{\bf Acknowledgements. } Y. Sheng is  supported by NSFC
(11922110). R. Tang is supported by NSFC
(12001228) and China Postdoctoral Science
Foundation (2020M670833).

\section{Twilled $3$-Lie algebras and the associated $L_\infty$-algebras}\label{sec:L}

In this section, we introduce the notion of a twilled 3-Lie algebra, from which we construct an $L_\infty$-algebra using Uchino's higher derived brackets. First we recall 3-Lie algebras and the graded Lie algebra that characterize 3-Lie algebras as Maurer-Cartan elements.

\begin{defi}{\rm (\cite{Filippov})}
A {\bf 3-Lie algebra} is a vector space $\g$ together with a skew-symmetric linear map $[\cdot,\cdot,\cdot]_{\g}:\otimes^{3}\g\rightarrow \g$, such that for $ x_{i}\in \g, 1\leq i\leq 5$, the following {\bf Fundamental Identity} holds:
\begin{eqnarray*}
\qquad [x_1,x_2,[x_3,x_4, x_5]_{\g}]_{\g}=[[x_1,x_2, x_3]_{\g},x_4,x_5]_{\g}+[x_3,[x_1,x_2, x_4]_{\g},x_5]_{\g}+[x_3,x_4,[x_1,x_2, x_5]_{\g}]_{\g}.
 \label{eq:jacobi1}
\end{eqnarray*}
\end{defi}

A permutation $\sigma\in \huaS_n$ is called an $(i,n-i)$-{\bf shuffle} if $\sigma(1)<\cdots <\sigma(i)$ and $\sigma(i+1)<\cdots <\sigma(n)$. If $i=0$ or $n$ we assume $\sigma=\Id$. The set of all $(i,n-i)$-shuffles will be denoted by $\mathbb \huaS_{(i,n-i)}$. The notion of an $(i_1,\cdots,i_k)$-shuffle and the set $\huaS_{(i_1,\cdots,i_k)}$ are defined analogously.

Let $\g$ be a vector space. We consider the graded vector space $$C^*_{\Li}(\g,\g)=\oplus_{n\ge 0}C^n_{\Li}(\g,\g)=\oplus_{n\ge 0}\Hom (\underbrace{\wedge^{2} \g\otimes \cdots\otimes \wedge^{2}\g}_{n}\wedge \g, \g),~(n\geq 0).$$ It is known that $C^*_{\Li}(\g,\g)$ equipped with the {\bf grade commutator bracket}
\begin{eqnarray}\label{3-Lie-bracket}
[P,Q]_{\Li}=P{\circ}Q-(-1)^{pq}Q{\circ}P,\quad \forall~ P\in C^{p}_{\Li}(\g,\g),Q\in C^{q}_{\Li}(\g,\g),
\end{eqnarray}
is a graded Lie algebra \cite{NR bracket of n-Lie}, where $P{\circ}Q\in C_{\Li}^{p+q}(\g,\g)$ is defined by

{\footnotesize
\begin{equation*}
\begin{aligned}
&(P{\circ}Q)(\mathfrak{X}_1,\cdots,\mathfrak{X}_{p+q},x)\\
=&\sum_{k=1}^{p}(-1)^{(k-1)q}\sum_{\sigma\in \huaS(k-1,q)}(-1)^\sigma P(\mathfrak{X}_{\sigma(1)},\cdots,\mathfrak{X}_{\sigma(k-1)},
Q(\mathfrak{X}_{\sigma(k)},\cdots,\mathfrak{X}_{\sigma(k+q-1)},x_{k+q})\wedge y_{k+q},\mathfrak{X}_{k+q+1},\cdots,\mathfrak{X}_{p+q},x)\\
&+\sum_{k=1}^{p}(-1)^{(k-1)q}\sum_{\sigma\in \huaS(k-1,q)}(-1)^\sigma P(\mathfrak{X}_{\sigma(1)},\cdots,\mathfrak{X}_{\sigma(k-1)},x_{k+q}\wedge
Q(\mathfrak{X}_{\sigma(k)},\cdots,\mathfrak{X}_{\sigma(k+q-1)},y_{k+q}),\mathfrak{X}_{k+q+1},\cdots,\mathfrak{X}_{p+q},x)\\
&+\sum_{\sigma\in \huaS(p,q)}(-1)^{pq}(-1)^\sigma P(\mathfrak{X}_{\sigma(1)},\cdots,\mathfrak{X}_{\sigma(p)},
Q(\mathfrak{X}_{\sigma(p+1)},\cdots,\mathfrak{X}_{\sigma(p+q-1)},\mathfrak{X}_{\sigma(p+q)},x)),\\
\end{aligned}
\end{equation*}
}
where   $\mathfrak{X}_{i}=x_i\wedge y_i\in \wedge^2 \g$, $i=1,2,\cdots,p+q$ and $x\in\g.$

The following result is well known.
\begin{pro}\label{pro:3LieMC}
  Let  $\pi \in C^{1}_{\Li}(\g,\g)=\Hom(\wedge^3\g,\g)$. Then $\pi $ defines a $3$-Lie algebra structure on $\g$ if and only if $\pi$ satisfies the Maurer-Cartan equation $[\pi ,\pi ]_{\Li}=0$.

Moreover, $(C^*_{\Li}(\g,\g),[\cdot,\cdot]_{\Li},d_{\pi})$ is a differential graded Lie algebra, where $d_\pi$ is defined by
\begin{eqnarray}
d_\pi:=[\pi,\cdot]_{\Li}.
\end{eqnarray}
\end{pro}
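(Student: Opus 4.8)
The plan is to reduce the first statement to one explicit evaluation of $[\pi,\pi]_{\Li}$, and to obtain the ``moreover'' part formally from the graded Jacobi identity for $[\cdot,\cdot]_{\Li}$.

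First I would note that any $\pi\in C^1_{\Li}(\g,\g)=\Hom(\wedge^3\g,\g)$ is automatically totally skew-symmetric, so $\pi$ is a $3$-Lie bracket if and only if it satisfies the Fundamental Identity. Since $\pi$ has degree $1$, formula \eqref{3-Lie-bracket} gives $[\pi,\pi]_{\Li}=\pi\circ\pi-(-1)^{1\cdot 1}\pi\circ\pi=2\,\pi\circ\pi$, so it is enough to show that $\pi\circ\pi=0$ is equivalent to the Fundamental Identity. To this end I would specialize the definition of $\circ$ to $p=q=1$. Then the relevant shuffle sets are $\huaS(0,1)=\{\Id\}$ and $\huaS(1,1)=\{\Id,(1\,2)\}$, so the first two sums each contribute a single term and the third contributes two; writing $\mathfrak{X}_1=x_1\wedge y_1$ and $\mathfrak{X}_2=x_2\wedge y_2$ and keeping track of the signs $(-1)^{(k-1)q}$, $(-1)^{\sigma}$ and $(-1)^{pq}$, one obtains
\begin{align*}
(\pi\circ\pi)(\mathfrak{X}_1,\mathfrak{X}_2,x)={}&[[x_1,y_1,x_2]_{\g},y_2,x]_{\g}+[x_2,[x_1,y_1,y_2]_{\g},x]_{\g}\\
&{}+[x_2,y_2,[x_1,y_1,x]_{\g}]_{\g}-[x_1,y_1,[x_2,y_2,x]_{\g}]_{\g}.
\end{align*}
Relabelling $(x_1,y_1,x_2,y_2,x)$ as $(x_1,x_2,x_3,x_4,x_5)$, this expression is exactly the right-hand side minus the left-hand side of the Fundamental Identity; hence $\pi\circ\pi=0$ holds if and only if $\pi$ satisfies the Fundamental Identity, which proves the first claim.

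For the ``moreover'' part I would argue formally. Since $(C^*_{\Li}(\g,\g),[\cdot,\cdot]_{\Li})$ is a graded Lie algebra by \cite{NR bracket of n-Lie}, the operator $d_\pi=[\pi,\cdot]_{\Li}$ is a graded derivation of $[\cdot,\cdot]_{\Li}$ of degree $1$ by the graded Jacobi identity. Applying the graded Jacobi identity once more, for $c\in C^*_{\Li}(\g,\g)$ one gets $d_\pi(d_\pi(c))=[\pi,[\pi,c]_{\Li}]_{\Li}=\tfrac12[[\pi,\pi]_{\Li},c]_{\Li}$ because $\pi$ has odd degree; by the first claim $[\pi,\pi]_{\Li}=0$, so $d_\pi\circ d_\pi=0$. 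Therefore $(C^*_{\Li}(\g,\g),[\cdot,\cdot]_{\Li},d_\pi)$ is a differential graded Lie algebra. I expect the only real obstacle to be the explicit evaluation of $\pi\circ\pi$: one must correctly read the general multi-sum formula for $\circ$ (stated for arbitrary $p,q$, with each $\wedge^2\g$-entry written as $x_i\wedge y_i$) in the case $p=q=1$ and verify that the four resulting terms recombine into the Fundamental Identity using only the skew-symmetry already built into $\wedge^2\g$ and $\wedge^3\g$; everything else is standard graded-algebra manipulation.
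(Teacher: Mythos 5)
Your proof is correct. The paper itself offers no argument here (it simply records the proposition as well known), and your computation is the standard one: the specialization of the $\circ$-formula to $p=q=1$ correctly yields the four terms $[[x_1,y_1,x_2]_{\g},y_2,x]_{\g}+[x_2,[x_1,y_1,y_2]_{\g},x]_{\g}+[x_2,y_2,[x_1,y_1,x]_{\g}]_{\g}-[x_1,y_1,[x_2,y_2,x]_{\g}]_{\g}$, whose vanishing on decomposables is exactly the Fundamental Identity, and the ``moreover'' part follows, as you say, from the graded Jacobi identity via $d_\pi^2=\tfrac12[[\pi,\pi]_{\Li},\cdot]_{\Li}=0$.
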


The differential $d_\pi$ can be served as the coboundary operator of 3-Lie algebras. See \cite{Takhtajan1} for the cohomology theory for $n$-Lie algebras.

\subsection{Lift and bidegree}\hspace{2mm}

Let $\g_1$ and $\g_2$ be vector spaces. We denote by $\g^{l,k}$ the subspace of $\wedge^{2} (\g_1\oplus\g_2)\otimes$$\overset{(n)}{\cdots}$$ \otimes \wedge^{2} (\g_1\oplus\g_2)\wedge (\g_1\oplus\g_2)$
which contains the number of $\g_1$ (resp. $\g_2$) is $l$ (resp. $k$).
Then the vector space $\wedge^{2} (\g_1\oplus\g_2)\otimes$$\overset{(n)}{\cdots}$$ \otimes \wedge^{2} (\g_1\oplus\g_2)\wedge (\g_1\oplus\g_2)$ is expended into the direct sum of $\g^{l,k},~l+k=2n+1$. For instance,
$$
\wedge^3(\g_1\oplus\g_2)=\g^{3,0}\oplus\g^{2,1}\oplus\g^{1,2}\oplus\g^{0,3}.
$$
It is straightforward to see that we have the following natural isomorphism
\begin{eqnarray}\label{decomposition}
C^n_{\Li}(\g_1\oplus\g_2,\g_1\oplus\g_2)\cong\sum_{l+k=2n+1}\Hom(\g^{l,k},\g_1)\oplus\sum_{l+k=2n+1}\Hom(\g^{l,k},\g_2).
\end{eqnarray}
An element $f\in\Hom(\g^{l,k},\g_1)$ (resp. $f\in\Hom(\g^{l,k},\g_2)$) naturally gives an element $\hat{f}\in C^n_{\Li}(\g_1\oplus\g_2,\g_1\oplus\g_2)$, which is called its  lift.
For example, the lifts of linear maps $\alpha:\wedge^3\g_1\lon\g_1,~\beta:\wedge^2\g_1\otimes\g_2\lon\g_2$ are defined by
\begin{eqnarray}
\label{semidirect-1}\hat{\alpha}\big((x,u),(y,v),(z,w)\big)&=&(\alpha(x,y,z),0),\\
\label{semidirect-2}\hat{\beta}\big((x,u),(y,v),(z,w)\big)&=&(0,\beta(x,y,w)+\beta(y,z,u)+\beta(z,x,v)),
\end{eqnarray}
respectively. Let $H:\g_2\lon\g_1$ be a linear map. Its lift is given by
$
\hat{H}(x,u)=(H(u),0).
$
Obviously we have $\hat{H}\circ\hat{H}=0.$

\begin{defi}\label{Bidegree}
A linear map $f\in \Hom(\underbrace{\wedge^{2} (\g_1\oplus\g_2)\otimes{\cdots} \otimes \wedge^{2} (\g_1\oplus\g_2)}_n\wedge (\g_1\oplus\g_2),\g_1\oplus\g_2)$ has a {\bf bidegree} $l|k$, which is denoted by $||f||=l|k$,   if $f$ satisfies the following four conditions:
\begin{itemize}
\item[\rm(i)] $l+k=2n;$
\item[\rm(ii)] If $X$ is an element in $\g^{l+1,k}$, then $f(X)\in\g_1;$
\item[\rm(iii)] If $X$ is an element in $\g^{l,k+1}$, then $f(X)\in\g_2;$
\item[\rm(iv)] All the other case, $f(X)=0.$
\end{itemize}
\end{defi}
A linear map $f$ is said to be homogeneous  if $f$ has a bidegree.
We have $l+k\ge0,~k,l\ge-1$ because $n\ge0$ and $l+1,~k+1\ge0$. For instance, the lift $\hat{H}\in C^0_{\Li}(\g_1\oplus\g_2,\g_1\oplus\g_2)$ of $H:\g_2\lon\g_1$ has the bidegree $-1|1$. Linear maps $\hat{\alpha},~\hat{\beta}\in C^1_{\Li}(\g_1\oplus\g_2,\g_1\oplus\g_2)$ given in \eqref{semidirect-1} and \eqref{semidirect-2} have the bidegree  $||\hat{\alpha}||=||\hat{\beta}||=2|0$. Thus, the sum
\begin{eqnarray}
\label{semidirect}\hat{\mu}:=\hat{\alpha}+\hat{\beta}
\end{eqnarray}
is a homogeneous linear map of the bidegree $2|0$, which is also a multiplication of the semidirect product type,
$$
\hat{\mu}\big((x,u),(y,v),(z,w)\big)=(\alpha(x,y,z),\beta(x,y,w)+\beta(y,z,u)+\beta(z,x,v)).
$$

It is obvious that we have the following lemmas:
\emptycomment{
\begin{lem}
  The bidegree of $f\in C^n(\g_1\oplus\g_2,\g_1\oplus\g_2)$ is $l|k$ if and only if the following four conditions hold:
\begin{itemize}
\item[\rm(i)] $l+k+1=n;$
\item[\rm(ii)] If $X$ is an element in $\g^{l+1,k}$, then $f(X)\in\g_1;$
\item[\rm(iii)] If $X$ is an element in $\g^{l,k+1}$, then $f(X)\in\g_2;$
\item[\rm(iv)] All the other case, $f(X)=0.$
\end{itemize}
\end{lem}
}

\begin{lem}\label{Zero-condition-1}
Let $f_1,\cdots,f_k\in C^n_{\Li}(\g_1\oplus\g_2,\g_1\oplus\g_2)$ be homogeneous linear maps and the bidegrees of $f_i$ are
different. Then $f_1+\cdots+f_k=0$ if and only if $f_1=\cdots=f_k=0.$
\end{lem}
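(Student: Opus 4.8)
The plan is to read off the lemma directly from the decomposition \eqref{decomposition}. The ``if'' implication is trivial, so assume $f_1+\cdots+f_k=0$ with pairwise distinct bidegrees $||f_i||=l_i|m_i$ (so $l_i+m_i=2n$ for each $i$). First I would restate the bidegree conditions of Definition \ref{Bidegree} in terms of
$$
C^n_{\Li}(\g_1\oplus\g_2,\g_1\oplus\g_2)\cong\bigoplus_{p+q=2n+1}\big(\Hom(\g^{p,q},\g_1)\oplus\Hom(\g^{p,q},\g_2)\big):
$$
a homogeneous map $f$ of bidegree $l|k$ has at most two nonzero components under this isomorphism, namely the one in $\Hom(\g^{l+1,k},\g_1)$ (condition (ii)) and the one in $\Hom(\g^{l,k+1},\g_2)$ (condition (iii)), all other components being zero by (iv). Note that these two components lie in genuinely different summands, since $(l+1,k)\neq(l,k+1)$.

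Next I would project the identity $f_1+\cdots+f_k=0$ onto each summand of the above direct sum. Fix $(p,q)$ with $p+q=2n+1$. By the first paragraph, the $\Hom(\g^{p,q},\g_1)$-component of $f_i$ can be nonzero only when $(l_i+1,m_i)=(p,q)$, i.e.\ $||f_i||=(p-1)|q$; since the bidegrees are pairwise distinct there is at most one such index, say $a=a(p,q)$. Likewise the $\Hom(\g^{p,q},\g_2)$-component of $f_i$ can be nonzero only when $||f_i||=p|(q-1)$, and there is at most one such index $b=b(p,q)$. Projecting $\sum_i f_i=0$ onto $\Hom(\g^{p,q},\g_1)$ then yields $f_{a}|_{\g^{p,q}}=0$, and projecting onto $\Hom(\g^{p,q},\g_2)$ yields $f_{b}|_{\g^{p,q}}=0$.

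Finally, for a fixed index $i$, I would apply the previous step twice: with $(p,q)=(l_i+1,m_i)$, which forces the $\g_1$-valued component of $f_i$ to vanish, and with $(p,q)=(l_i,m_i+1)$, which forces its $\g_2$-valued component to vanish. Since these are the only potentially nonzero components of $f_i$, we conclude $f_i=0$, and as $i$ was arbitrary, $f_1=\cdots=f_k=0$. I do not expect a genuine obstacle here; the one point that requires care is that two maps with the distinct bidegrees $l|k$ and $(l+1)|(k-1)$ both have a component supported on $\g^{l+1,k}$, but one of them takes values in $\g_1$ and the other in $\g_2$, so the $\g_1\oplus\g_2$ splitting prevents any cancellation between them --- which is precisely what makes the componentwise projection argument go through.
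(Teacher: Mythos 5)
Your proof is correct. The paper states this lemma without proof (it is introduced with ``It is obvious that we have the following lemmas''), so there is nothing to compare against; your argument via the decomposition \eqref{decomposition} --- each homogeneous map occupying exactly the two summands $\Hom(\g^{l+1,k},\g_1)$ and $\Hom(\g^{l,k+1},\g_2)$, with distinctness of bidegrees forcing at most one $f_i$ per summand --- is precisely the justification the authors leave implicit, and you correctly flag the one delicate point, namely that bidegrees $l|k$ and $(l+1)|(k-1)$ share the source $\g^{l+1,k}$ but land in different factors of $\g_1\oplus\g_2$.
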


\begin{lem}\label{Zero-condition-2}
If $||f||=-1|l$ (resp. $l|-1$) and $||g||=-1|k$ (resp. $k|-1$), then $[f,g]_{\Li}=0.$
\end{lem}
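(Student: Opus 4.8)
The plan is to translate the bidegree hypotheses into a statement about the \emph{support} of $f$ and $g$, and then to check that the circle products $f\circ g$ and $g\circ f$ vanish term by term, so that no signs or shuffle combinatorics enter the argument.

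First I would unwind Definition~\ref{Bidegree}. Suppose $f\in C^n_{\Li}(\g_1\oplus\g_2,\g_1\oplus\g_2)$ has bidegree $-1|l$. Then condition (i) forces $l=2n+1$, which is exactly the number of (multiplicity-counted) arguments of $f$; condition (ii) says that $f$ can be nonzero only on $\g^{0,2n+1}$, i.e.\ when \emph{all} of its arguments lie in $\g_2$, and that in this case $f$ takes values in $\g_1$; conditions (iii)--(iv) say $f$ kills every other homogeneous component. Symmetrically, if $f$ has bidegree $l|-1$ then $f$ is supported on $\g^{2n+1,0}$ (all arguments in $\g_1$) with values in $\g_2$. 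The same description applies to $g$, with $n$ replaced by the degree $m$ of $g$.

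Next I would run through the three sums in the defining formula for $P\circ Q$ with $P=f$ and $Q=g$. In each summand, $g$ is evaluated on a shuffled block of the arguments, and its value is substituted back into $f$: either as the first or the second slot of one of the $\wedge^2$-entries of $f$ (the first two sums), or as the final single $\g$-entry of $f$ (the third sum). Consider the case $||f||=-1|l$, $||g||=-1|k$. Whenever the value produced by $g$ is nonzero it lies in $\g_1$, so the entry of $f$ into which it is inserted acquires a $\g_1$-component; hence the total argument of $f$ lies in some $\g^{a,b}$ with $a\ge 1$, which by the support description of the first step is annihilated by $f$. Thus every term of $f\circ g$ is zero, that is, $f\circ g=0$; interchanging the roles of $f$ and $g$ gives $g\circ f=0$ as well. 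By \eqref{3-Lie-bracket} we conclude $[f,g]_{\Li}=f\circ g-(-1)^{nm}g\circ f=0$. The case $||f||=l|-1$, $||g||=k|-1$ is verbatim the same after interchanging $\g_1\leftrightarrow\g_2$ throughout.

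I do not expect a genuine obstacle here: all the content is in the support reformulation of the first step, after which each term vanishes for a trivial reason and no estimate on signs or shuffle coefficients is needed. The only point requiring a moment's care is verifying that, in each of the three families of terms, the output of $g$ really does occupy an argument slot of $f$ (rather than being discarded), so that it genuinely pushes the input of $f$ off its support; this is immediate from the shape of the circle-product formula.
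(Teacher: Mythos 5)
Your proof is correct and follows essentially the same route as the paper: the paper's one-line argument observes that maps of bidegree $-1|l$ are exactly lifts of elements of $\oplus_{m\ge 0}\Hom(\wedge^2\g_2\otimes\cdots\otimes\wedge^2\g_2\wedge\g_2,\g_1)$, so that $f\circ g=g\circ f=0$, which is precisely your support argument spelled out termwise. No issues.
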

\begin{proof}
Assume that $||f||=-1|l$ and $||g||=-1|k$. Then $f$ and $g$ are both lifts of linear maps in $\oplus_{m\geq0}\Hom (\underbrace{\wedge^{2} \g_2\otimes \cdots\otimes \wedge^{2}\g_2}_{m}\wedge \g_2, \g_1)$. Thus, we have $f\circ g=g\circ f=0$ and $[f,g]_{\Li}=0.$
  \end{proof}

\begin{lem}\label{important-lemma-1}
Let $f\in C^{n}_{\Li}(\g_1\oplus\g_2,\g_1\oplus\g_2)$ and $g\in C^{m}_{\Li}(\g_1\oplus\g_2,\g_1\oplus\g_2)$ be homogeneous linear maps with bidegrees $l_f|k_f$ and $l_g|k_g$ respectively. Then the composition $f\circ g\in C^{n+m}_{\Li}(\g_1\oplus\g_2,\g_1\oplus\g_2)$ is  a homogeneous linear map of the bidegree $(l_f+l_g)|(k_f+k_g).$
\end{lem}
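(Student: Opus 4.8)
The plan is to track, for each summand in the explicit formula for $f\circ g$, how many elements of $\g_1$ and $\g_2$ are fed into $f$ and into $g$, and where the output lands. Recall from the formula defining $P\circ Q$ that $(f\circ g)(\mathfrak{X}_1,\cdots,\mathfrak{X}_{n+m},x)$ is a sum of three types of terms: terms where $g$ is inserted into the $\wedge^2$-slot of $f$ (two families, according to whether $g$'s output sits in the first or second factor of a wedge), and terms where $g$ occupies the last slot of $f$. In every case $g$ is applied to a string built from some of the $\mathfrak{X}_i$'s together with one of the ``$x$-type'' or ``$y$-type'' arguments, producing an element of $\g_1$ or $\g_2$ depending on the bidegree constraint on $g$, and then $f$ is applied to the remaining $\mathfrak{X}_i$'s together with the output of $g$ (and the final $x$).

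First I would fix an input $X\in\g^{L+1,K}$ (resp. $\g^{L,K+1}$) with $L=l_f+l_g$, $K=k_f+k_g$, and check condition (ii) (resp. (iii)) of Definition \ref{Bidegree}: I must show $(f\circ g)(X)\in\g_1$ (resp. $\g_2$). Consider a single nonzero term. The inner application $g(\cdots)$ consumes a sub-collection of the slots of $X$; since $g$ is homogeneous of bidegree $l_g|k_g$, a nonzero contribution forces this sub-collection to contain exactly $l_g+1$ copies of $\g_1$ and $k_g$ copies of $\g_2$ (then $g(\cdots)\in\g_1$), or exactly $l_g$ copies of $\g_1$ and $k_g+1$ copies of $\g_2$ (then $g(\cdots)\in\g_2$). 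In the first case the output of $g$ is a $\g_1$-element, so the arguments fed to $f$ consist of the remaining slots of $X$ — which contain $(L+1)-(l_g+1)=l_f$ copies of $\g_1$ and $K-k_g=k_f$ copies of $\g_2$ — together with the $\g_1$-element $g(\cdots)$, giving $l_f+1$ copies of $\g_1$ and $k_f$ copies of $\g_2$; by homogeneity of $f$ this lands in $\g_1$, as required. In the second case $f$ receives $l_f$ copies of $\g_1$ and $k_f+1$ copies of $\g_2$ and so $f(\cdots)=0$. The symmetric bookkeeping handles $X\in\g^{L,K+1}$, and any input not of the form $\g^{L+1,K}$ or $\g^{L,K+1}$ forces every term to vanish, giving condition (iv). Condition (i), $L+K=2(n+m)$, is immediate from $l_f+k_f=2n$ and $l_g+k_g=2m$.

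The only mild subtlety — and the step I would be most careful with — is the bookkeeping in the terms where $g$ is plugged into a $\wedge^2$-slot of $f$: there the output of $g$ is paired with one extra original argument ($x_{k+q}$ or $y_{k+q}$) to form a new element of $\wedge^2(\g_1\oplus\g_2)$, and one must verify that the count of $\g_1$- and $\g_2$-constituents entering $f$ is exactly as claimed, i.e. that this extra argument was already accounted for among the $L+K+1$ constituents of $X$. A clean way to organize this is to say: $X$ has $L+K+1$ ``atoms'' (the $2(n+m)$ wedge-entries plus the final vector), $g$ absorbs $2m+1$ of them and returns one atom, so $f$ receives $(L+K+1)-(2m+1)+1 = L+K+1-2m = 2n+1$ atoms, consistent with $f\in C^n_{\Li}$; then the $\g_1/\g_2$-counting above is forced. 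Since $f$ and $g$ are homogeneous and each term of $f\circ g$ is shown to satisfy (i)--(iv) with the single pair of values $(l_f+l_g)|(k_f+k_g)$, the sum $f\circ g$ is homogeneous of that bidegree. $\square$
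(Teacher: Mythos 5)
Your strategy is exactly the bookkeeping this lemma requires, and it matches what the paper has in mind (the paper's own proof is just ``It is straightforward''): conditions (i) and (iv), your first case, and the atom count $\bigl(2(n+m)+1\bigr)-(2m+1)+1=2n+1$ are all correct. However, your second case contains an arithmetic slip. With $X\in\g^{L+1,K}$, $L=l_f+l_g$, $K=k_f+k_g$, if $g$ is applied to a sub-collection lying in $\g^{l_g,k_g+1}$, then $g$ consumes $l_g$ of the $l_f+l_g+1$ atoms from $\g_1$ and $k_g+1$ of the $k_f+k_g$ atoms from $\g_2$, and returns a single $\g_2$-atom; hence $f$ receives $(l_f+l_g+1)-l_g=l_f+1$ atoms from $\g_1$ and $(k_f-1)+1=k_f$ atoms from $\g_2$, i.e.\ an element of $\g^{l_f+1,k_f}$, so by homogeneity of $f$ this term lands in $\g_1$ --- it need not vanish. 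Your claim that $f$ receives $l_f$ copies of $\g_1$ and $k_f+1$ copies of $\g_2$ (and hence gives $0$) is false, and if taken seriously it would wrongly kill every term of the composition in which $g$ outputs into $\g_2$. Your final conclusion for condition (ii) survives only by accident, because $0\in\g_1$; once the count is corrected, both cases genuinely land in $\g_1$, the symmetric computation for $X\in\g^{L,K+1}$ gives $\g_2$, and the lemma follows as you intend.
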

\begin{proof}
  It is straightforward.
\end{proof}

\emptycomment{

\begin{proof}
We show that conditions \rm(i)-\rm(iv) in Definition \ref{Bidegree} hold. The condition \rm(i) holds because $(l_f+l_g)+(k_f+k_g)=2n+2m=2(m+n).$ We show that conditions \rm(ii) and \rm(iii) hold.
Take an element $\mathfrak{X}_1\otimes\cdots\otimes\mathfrak{X}_{m+n}\wedge x\in\g^{l_f+l_g+1,k_f+k_g}$, where
$\mathfrak{X}_i=x_{i}\wedge y_i$, $\mathfrak{X}_i, x\in\g_1$ or $\mathfrak{X}_i, x\in\g_2$.

\begin{small}
\begin{equation}\label{composition}
\begin{aligned}
&(P{\circ}Q)(\mathfrak{X}_1,\cdots,\mathfrak{X}_{m+n},x)\\
=&\sum_{k=1}^{n}(-1)^{(k-1)m}\sum_{\sigma\in \huaS(k-1,m)}(-1)^\sigma f(\mathfrak{X}_{\sigma(1)},\cdots,\mathfrak{X}_{\sigma(k-1)},
g(\mathfrak{X}_{\sigma(k)},\cdots,\mathfrak{X}_{\sigma(k+m-1)},x_{k+m})\wedge y_{k+m},\mathfrak{X}_{k+m+1},\cdots,\mathfrak{X}_{m+n},x)\\
&+\sum_{k=1}^{n}(-1)^{(k-1)m}\sum_{\sigma\in \huaS(k-1,m)}(-1)^\sigma f(\mathfrak{X}_{\sigma(1)},\cdots,\mathfrak{X}_{\sigma(k-1)},x_{k+m}\wedge
g(\mathfrak{X}_{\sigma(k)},\cdots,\mathfrak{X}_{\sigma(k+m-1)},y_{k+m}),\mathfrak{X}_{k+m+1},\cdots,\mathfrak{X}_{m+n},x)\\
&+\sum_{\sigma\in \huaS(n,m)}(-1)^{nm}(-1)^\sigma f(\mathfrak{X}_{\sigma(1)},\cdots,\mathfrak{X}_{\sigma(n)},
g(\mathfrak{X}_{\sigma(n+1)},\cdots,\mathfrak{X}_{\sigma(n+m-1)},\mathfrak{X}_{\sigma(n+m)},x))\\
\end{aligned}
\end{equation}
\end{small}

If \eqref{composition} is zero, then it is in $\g_1$ for \rm(ii) is satisfied.
So we assume \eqref{composition} $\not=0$. Thus, there are some $\sigma\in\mathbb S_{(k-1,m)}$ and
 $\sigma\in\mathbb S_{(n,m)}$, so that $g(\mathfrak{X}_{\sigma(k)},\cdots,\mathfrak{X}_{\sigma(k+m-1)},x_{k+m})\not=0$, $g(\mathfrak{X}_{\sigma(k)},\cdots,\mathfrak{X}_{\sigma(k+m-1)},y_{k+m})\not=0$,
 $g(\mathfrak{X}_{\sigma(n+1)},\cdots,\mathfrak{X}_{\sigma(m+n)},x)\not=0$.

First, we suppose $g(\mathfrak{X}_{\sigma(k)},\cdots,\mathfrak{X}_{\sigma(k+m-1)},x_{k+m})\in\g_1.$

In this case, $\mathfrak{X}_{\sigma(k)}\otimes\cdots\otimes\mathfrak{X}_{\sigma(k+m-1)}\wedge x_{k+m}\in\g^{l_g+1,k_g}$.
Thus
$$\mathfrak{X}_{\sigma(1)}\otimes\cdots\otimes\mathfrak{X}_{\sigma(k-1)}\otimes g(\mathfrak{X}_{\sigma(k)},\cdots,\mathfrak{X}_{\sigma(k+m-1)},x_{k+m})\wedge y_{k+m} \otimes\mathfrak{X}_{k+m+1}\cdots\otimes\mathfrak{X}_{n+m}\wedge x\in\g^{l_f+1,k_f},
$$
which gives
$$f(\mathfrak{X}_{\sigma(1)},\cdots,\mathfrak{X}_{\sigma(k-1)},
g(\mathfrak{X}_{\sigma(k)},\cdots,\mathfrak{X}_{\sigma(k+m-1)},x_{k+m})\wedge y_{k+m},\mathfrak{X}_{k+m+1},\cdots,\mathfrak{X}_{m+n},x)\in \g_1.$$
If $g(\mathfrak{X}_{\sigma(k)},\cdots,\mathfrak{X}_{\sigma(k+m-1)},x_{k+m})\in\g_2,$
we have $\mathfrak{X}_{\sigma(k)}\otimes\cdots\otimes\mathfrak{X}_{\sigma(k+m-1)}\wedge x_{k+m}\in\g^{l_g,k_g+1}$. Thus $$
\mathfrak{X}_{\sigma(1)}\otimes\cdots\otimes\mathfrak{X}_{\sigma(k-1)}\otimes g(\mathfrak{X}_{\sigma(k)},\cdots,\mathfrak{X}_{\sigma(k+m-1)},x_{k+m})\wedge y_{k+m} \otimes\mathfrak{X}_{k+m+1}\cdots\otimes\mathfrak{X}_{n+m}\wedge x\in\g^{l_f+1,k_f},
$$
which gives
$$
f(\mathfrak{X}_{\sigma(1)},\cdots,\mathfrak{X}_{\sigma(k-1)},
g(\mathfrak{X}_{\sigma(k)},\cdots,\mathfrak{X}_{\sigma(k+m-1)},x_{k+m})\wedge y_{k+m},\mathfrak{X}_{k+m+1},\cdots,\mathfrak{X}_{m+n},x)\in\g_1.
$$

Though the similarly method, we can discuss the case of $g(\mathfrak{X}_{\sigma(k)},\cdots,\mathfrak{X}_{\sigma(k+m-1)},x_{k+m})\not=0$, $g(\mathfrak{X}_{\sigma(k)},\cdots,\mathfrak{X}_{\sigma(k+m-1)},y_{k+m})\not=0$,

and prove when $$\mathfrak{X}_1\otimes\cdots\otimes\mathfrak{X}_{m+n}\wedge x\in\g^{l_f+l_g+1,k_f+k_g},$$ the
Eq.\eqref{composition}$~\in\g_1$.
This deduce that condition \rm(ii) holds. Similarly, for $\mathfrak{X}_1\otimes\cdots\otimes\mathfrak{X}_{m+n}\wedge x\in\g^{l_f+l_g,k_f+k_g+1}$, the condition \rm(iii) holds.

If $\mathfrak{X}_1\otimes\cdots\otimes\mathfrak{X}_{m+n}\wedge x\in\g^{l_f+l_g+i,k_f+k_g+1-i}$, here $i\not=0,1$. For some $\sigma\in\mathbb S_{(k-1,m)}$ and
 $\sigma\in\mathbb S_{(n,m)}$, so that $g(\mathfrak{X}_{\sigma(k)},\cdots,\mathfrak{X}_{\sigma(k+m-1)},x_{k+m})\not=0$, $g(\mathfrak{X}_{\sigma(k)},\cdots,\mathfrak{X}_{\sigma(k+m-1)},y_{k+m})\not=0$,
 $g(\mathfrak{X}_{\sigma(n+1)},\cdots,\mathfrak{X}_{\sigma(m+n)},x)\not=0$.

If $g(\mathfrak{X}_{\sigma(k)},\cdots,\mathfrak{X}_{\sigma(k+m-1)},x_{k+m})\in\g_1$, thus we have
$$
\mathfrak{X}_{\sigma(1)}\otimes\cdots\otimes\mathfrak{X}_{\sigma(k-1)}\otimes g(\mathfrak{X}_{\sigma(k)},\cdots,\mathfrak{X}_{\sigma(k+m-1)},x_{k+m})\wedge y_{k+m} \otimes\mathfrak{X}_{k+m+1}\cdots\otimes\mathfrak{X}_{n+m}\wedge x\in\g^{l_f+i,k_f+1-i}.
$$
By $i\not=0,1$, we have
$$
f(\mathfrak{X}_{\sigma(1)},\cdots,\mathfrak{X}_{\sigma(k-1)},
g(\mathfrak{X}_{\sigma(k)},\cdots,\mathfrak{X}_{\sigma(k+m-1)},x_{k+m})\wedge y_{k+m},\mathfrak{X}_{k+m+1},\cdots,\mathfrak{X}_{m+n},x)=0.
$$
 If $g(\mathfrak{X}_{\sigma(k)},\cdots,\mathfrak{X}_{\sigma(k+m-1)},x_{k+m})\in\g_2$, thus we have
$$
\mathfrak{X}_{\sigma(1)}\otimes\cdots\otimes\mathfrak{X}_{\sigma(k-1)}\otimes g(\mathfrak{X}_{\sigma(k)},\cdots,\mathfrak{X}_{\sigma(k+m-1)},x_{k+m})\wedge y_{k+m} \otimes\mathfrak{X}_{k+m+1}\cdots\otimes\mathfrak{X}_{n+m}\wedge x\in\g^{l_f+i,k_f+1-i}.
$$
By $i\not=0,1$, we have
$$
f(\mathfrak{X}_{\sigma(1)},\cdots,\mathfrak{X}_{\sigma(k-1)},
g(\mathfrak{X}_{\sigma(k)},\cdots,\mathfrak{X}_{\sigma(k+m-1)},x_{k+m})\wedge y_{k+m},\mathfrak{X}_{k+m+1},\cdots,\mathfrak{X}_{m+n},x)=0.
$$
Thus condition \rm(deg3) holds. The proof is finished.
\end{proof}
}

\begin{lem}\label{important-lemma-2}
If $||f||=l_f|k_f$ and $||g||=l_g|k_g$, then $[f,g]_{\Li}$ has the bidegree $(l_f+l_g)|(k_f+k_g).$
\end{lem}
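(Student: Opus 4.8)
The statement $\|[f,g]_{\Li}\| = (l_f+l_g)|(k_f+k_g)$ follows almost immediately from the definition \eqref{3-Lie-bracket} of the graded commutator bracket together with Lemma \ref{important-lemma-1}. The plan is as follows.

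First I would recall that by \eqref{3-Lie-bracket} we have $[f,g]_{\Li} = f\circ g - (-1)^{nm} g\circ f$, a linear combination of the two compositions $f\circ g \in C^{n+m}_{\Li}(\g_1\oplus\g_2,\g_1\oplus\g_2)$ and $g\circ f \in C^{n+m}_{\Li}(\g_1\oplus\g_2,\g_1\oplus\g_2)$. By Lemma \ref{important-lemma-1}, since $\|f\| = l_f|k_f$ and $\|g\| = l_g|k_g$, the composition $f\circ g$ is homogeneous of bidegree $(l_f+l_g)|(k_f+k_g)$, and likewise $g\circ f$ is homogeneous of bidegree $(l_g+l_f)|(k_g+k_f)$, which is the \emph{same} bidegree. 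So both terms in the bracket are homogeneous of one and the same bidegree $(l_f+l_g)|(k_f+k_g)$.

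Then I would verify that a $\K$-linear combination of two homogeneous maps of the same bidegree is again homogeneous of that bidegree: conditions (i)--(iv) of Definition \ref{Bidegree} are each preserved under addition and scalar multiplication (condition (i) is a statement about $n+m$ alone; conditions (ii) and (iii) say the image of a fixed subspace lies in $\g_1$ resp. $\g_2$, which is closed under linear combinations; condition (iv) says the map vanishes on certain subspaces, again closed under linear combinations). Hence $[f,g]_{\Li} = f\circ g - (-1)^{nm}g\circ f$ is homogeneous of bidegree $(l_f+l_g)|(k_f+k_g)$, as claimed.

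There is no real obstacle here — the work has all been done in Lemma \ref{important-lemma-1}, whose (omitted) proof is the genuinely computational one, tracking through the explicit shuffle formula for $P\circ Q$ to see that it shifts the number of $\g_1$- and $\g_2$-slots additively. The only thing to be slightly careful about is the trivial sign/closure observation in the second step, and the fact that the two terms have matching bidegree precisely because addition of the index pairs is commutative. I would therefore present this as a two-line proof: cite Lemma \ref{important-lemma-1} for each composition, then note bidegree is preserved under linear combinations.

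\begin{proof}
By \eqref{3-Lie-bracket}, $[f,g]_{\Li}=f\circ g-(-1)^{nm}g\circ f$, where $f\in C^n_{\Li}(\g_1\oplus\g_2,\g_1\oplus\g_2)$ and $g\in C^m_{\Li}(\g_1\oplus\g_2,\g_1\oplus\g_2)$. By Lemma \ref{important-lemma-1}, $f\circ g$ is homogeneous of bidegree $(l_f+l_g)|(k_f+k_g)$ and $g\circ f$ is homogeneous of bidegree $(l_g+l_f)|(k_g+k_f)$, which is the same bidegree. Since conditions (i)--(iv) of Definition \ref{Bidegree} are preserved under $\K$-linear combinations, $[f,g]_{\Li}$ is homogeneous of bidegree $(l_f+l_g)|(k_f+k_g)$.
\end{proof}
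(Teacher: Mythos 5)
Your proof is correct and follows exactly the paper's route: the paper's own proof is the one-line citation of Lemma \ref{important-lemma-1} together with \eqref{3-Lie-bracket}, and you have simply spelled out the (true and easy) details that both compositions $f\circ g$ and $g\circ f$ carry the same bidegree and that homogeneity of a fixed bidegree is preserved under linear combinations. No issues.
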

\begin{proof}
By Lemma \ref{important-lemma-1} and \eqref{3-Lie-bracket}, we have $||[f,g]_{\Li}||=(l_f+l_g)|(k_f+k_g).$
\end{proof}

\emptycomment{
Let $f$ be an $n$-cochain in $C^n(\g_1\oplus\g_2,\g_1\oplus\g_2)$. We say that the bidegree of $f$ is $k|l$ if $f$ is an element in $C^n(\g^{l,k-1},\g_1)$ or in $C^n(\g^{l-1,k},\g_2)$, where $n=k+l-1$. We denote the bidegree of $f$ by $||f||=k|l$. In general, cochain do not have bidegree. We call a cochain $f$ a homogeneous cochain if $f$ has a bidegree.

We have $k+l\ge2$ because $n\ge1$. Thus there are no cochains of bidegree $0|0$ or $0|1$ or $1|0$. For instance, the lift $\hat{H}\in C^1(\g_1\oplus\g_2)$ of $H:\g_2\lon\g_1$ has bidegree $2|0$. We recall that $\hat{\alpha},\hat{\beta},\hat{\gamma}\in C^2(\g_1\oplus\g_2)$ in \eqref{semidirect-1},\eqref{semidirect-2} and \eqref{semidirect-3}. One can easily see that $||\hat{\alpha}||=||\hat{\beta}||=||\hat{\gamma}||=1|2$. Thus the sum
\begin{eqnarray}
\label{semidirect}\hat{\mu}:=\hat{\alpha}+\hat{\beta}+\hat{\gamma}
\end{eqnarray}
is a homogeneous cochain of bidegree $1|2$. The cochain $\hat{\mu}$ is a multiplication of semidirect product type,
$$
\hat{\mu}\big((x_1,v_1),(x_2,v_2)\big)=(\alpha(x_1,x_2),\beta(x_1,v_2)+\gamma(v_1,x_2)),
$$
where $(x_1,v_1),(x_2,v_2)\in\g_1\oplus\g_2$. Observe that $\mu$ is not a lift (there is no $\mu$), however, we will use this symbol because $\hat{\mu}$ is an interesting homogeneous cochain. \vspace{2mm}

It is obvious that we have the following lemmas:
\begin{lem}
Let $f\in C^n(\g_1\oplus\g_2)$ be a cochain. The bidegree of $f$ is $k|l$ if and only if the following four conditions hold:
\begin{itemize}
\item[\rm(deg1)] $k+l-1=n.$
\item[\rm(deg2-1)] If $X$ is an element in $\g^{l,k-1}$, then $f(X)\in\g_1.$
\item[\rm(deg2-2)] If $X$ is an element in $\g^{l-1,k}$, then $f(X)\in\g_2.$
\item[\rm(deg3)] All the other case, $f(X)=0.$
\end{itemize}
\end{lem}

\begin{lem}\label{Zero-condition-1}
Let $f_1,\cdots,f_i,\cdots,f_k\in C^n(\g_1\oplus\g_2)$ be homogeneous cochain and the bidegree of $f_i$ are
different. Then $f_1+\cdots+f_k=0$ if and only if $f_1=\cdots=f_k=0.$
\end{lem}

\begin{lem}\label{Zero-condition-2}
If $||f||=k|0$ (resp. $0|k$) and $||g||=l|0$ (resp. $0|l$), then $[f,g]=0.$
\end{lem}
\begin{proof}
Assume that $|f|=k|0$ and $|g|=l|0$. Then $f$ and $g$ are both horizontal lift of cochains in $C^*(\g_2,\g_1)$. Thus, form the definition of lift, we have $f\circ_i g=g\circ_j f=0$ for any $i,j.$ The proof is finished.
\end{proof}

\begin{lem}\label{important-lemma-1}
Let $f\in C^{n}(\g_1\oplus\g_2)$ and $g\in C^{m}(\g_1\oplus\g_2)$ be homogeneous cochains with bidegrees $k_f|l_f$ and $k_g|l_g$ respectively. The composition $f\circ_ig\in C^{n+m-1}(\g_1\oplus\g_2)$ is again a homogeneous cochain, and the bidegree is $k_f+k_g-1|l_f+l_g-1.$
\end{lem}
\begin{proof}
We show that conditions \rm(deg1)-\rm(deg3) hold. The condition \rm(deg1) holds because $(k_f+k_g-1)+(l_f+l_g-1)-1=n+m-1.$ We show that conditions \rm(deg2-1) and \rm(deg2-2) hold.
Take an element $\xi_1\otimes\cdots\otimes\xi_{n+m-1}\in\g^{l_f+l_g-1,k_f+k_g-2}$, where $\xi_i\in\g_1$ or $\xi_i\in\g_2$. Consider
\begin{eqnarray}
\nonumber&&f\circ_ig(\xi_1\otimes\cdots\otimes\xi_{n+m-1})\\
\label{composition}&=&\sum_{\sigma\in\mathbb S_{(i-1,m-1)}}(-1)^{\sigma}f(\xi_{\sigma(1)},\cdots,\xi_{\sigma(i-1)},g(\xi_{\sigma(i)},\cdots,\xi_{\sigma(i+m-2)},\xi_{i+m-1}),\xi_{i+m}\cdots,\xi_{n+m-1})
\end{eqnarray}
If \eqref{composition} is zero, then it is in $\g_1$ for \rm(deg2-1) is satisfied. So we assume \eqref{composition} $\not=0$. Thus, there are some $\sigma\in\mathbb S_{(i-1,m-1)}$ so that $g(\xi_{\sigma(i)},\cdots,\xi_{\sigma(i+m-2)},\xi_{i+m-1})\not=0$. We consider the case of $$
g(\xi_{\sigma(i)},\cdots,\xi_{\sigma(i+m-2)},\xi_{i+m-1})\in\g_1.
$$
In this case, $\xi_{\sigma(i)}\otimes\cdots\otimes\xi_{\sigma(i+m-2)}\otimes\xi_{i+m-1}\in\g^{l_g,k_g-1}$. Thus $$
\xi_{\sigma(1)}\otimes\cdots\otimes\xi_{\sigma(i-1)}\otimes g(\xi_{\sigma(i)},\cdots,\xi_{\sigma(i+m-2)},\xi_{i+m-1})\otimes\xi_{i+m}\cdots\otimes\xi_{n+m-1}\in\g^{l_f,k_f-1},
$$
which gives
$$f(\xi_{\sigma(1)},\cdots,\xi_{\sigma(i-1)},g(\xi_{\sigma(i)},\cdots,\xi_{\sigma(i+m-2)},\xi_{i+m-1}),\xi_{i+m}\cdots,\xi_{n+m-1})\in\g_1.
$$
If $g(\xi_{\sigma(i)},\cdots,\xi_{\sigma(i+m-2)},\xi_{i+m-1})\in\g_2,$
we have $\xi_{\sigma(i)}\otimes\cdots\otimes\xi_{\sigma(i+m-2)}\otimes\xi_{i+m-1}\in\g^{l_g-1,k_g}$. Thus $$
\xi_{\sigma(1)}\otimes\cdots\otimes\xi_{\sigma(i-1)}\otimes g(\xi_{\sigma(i)},\cdots,\xi_{\sigma(i+m-2)},\xi_{i+m-1})\otimes\xi_{i+m}\cdots\otimes\xi_{n+m-1}\in\g^{l_f,k_f-1},
$$
which gives
$$
f(\xi_{\sigma(1)},\cdots,\xi_{\sigma(i-1)},g(\xi_{\sigma(i)},\cdots,\xi_{\sigma(i+m-2)},\xi_{i+m-1}),\xi_{i+m}\cdots,\xi_{n+m-1})\in\g_1.
$$
This deduce that condition \rm(deg2-1) holds. Similarly, for $\xi_1\otimes\cdots\otimes\xi_{n+m-1}\in\g^{l_f+l_g-2,k_f+k_g-1}$, the condition \rm(deg2-2) holds. If $\xi_1\otimes\cdots\otimes\xi_{n+m-1}\in\g^{l_f+l_g-1+i,k_f+k_g-2-i}$, here $i\not=0,-1$. For some $\sigma\in\mathbb S_{(i-1,m-1)}$, we have $g(\xi_{\sigma(i)},\cdots,\xi_{\sigma(i+m-2)},\xi_{i+m-1})\not=0$.

If $g(\xi_{\sigma(i)},\cdots,\xi_{\sigma(i+m-2)},\xi_{i+m-1})\in\g_1$, thus we have
$$
\xi_{\sigma(1)}\otimes\cdots\otimes\xi_{\sigma(i-1)}\otimes g(\xi_{\sigma(i)},\cdots,\xi_{\sigma(i+m-2)},\xi_{i+m-1})\otimes\xi_{i+m}\cdots\otimes\xi_{n+m-1}\in\g^{l_f+i,k_f-1-i}.
$$
By $i\not=0,-1$, we have
$$
f(\xi_{\sigma(1)},\cdots,\xi_{\sigma(i-1)},g(\xi_{\sigma(i)},\cdots,\xi_{\sigma(i+m-2)},\xi_{i+m-1}),\xi_{i+m}\cdots,\xi_{n+m-1})=0.
$$
 If $g(\xi_{\sigma(i)},\cdots,\xi_{\sigma(i+m-2)},\xi_{i+m-1})\in\g_2$, thus we have
$$
\xi_{\sigma(1)}\otimes\cdots\otimes\xi_{\sigma(i-1)}\otimes g(\xi_{\sigma(i)},\cdots,\xi_{\sigma(i+m-2)},\xi_{i+m-1})\otimes\xi_{i+m}\cdots\otimes\xi_{n+m-1}\in\g^{l_f+i,k_f-1-i}.
$$
By $i\not=0,-1$, we have
$$
f(\xi_{\sigma(1)},\cdots,\xi_{\sigma(i-1)},g(\xi_{\sigma(i)},\cdots,\xi_{\sigma(i+m-2)},\xi_{i+m-1}),\xi_{i+m}\cdots,\xi_{n+m-1})=0.
$$
Thus condition \rm(deg3) holds. The proof is finished.
\end{proof}

\begin{lem}\label{important-lemma-2}
If $||f||=k_f|l_f$ and $||g||=k_g|l_g$, then $[f,g]$ has the bidegree $k_f+k_g-1|l_f+l_g-1.$
\end{lem}
\begin{proof}
By Lemma \ref{important-lemma-1} and \eqref{leibniz-bracket}, we have $||[f,g]||=k_f+k_g-1|l_f+l_g-1.$ The proof is finished.
\end{proof}
}

\subsection{Twilled $3$-Lie algebras}

Let $(\huaG,[\cdot,\cdot,\cdot]_\huaG)$ be a $3$-Lie algebra with a decomposition into two subspaces\footnote{Here $\g_1$ and $\g_2$ are not necessarily subalgebras.}, $\huaG=\g_1\oplus\g_2$.

\begin{lem}\label{lem:dec}
Any $\Omega\in C^1_{\Li}(\huaG,\huaG)=\Hom(\wedge^3(\g_1\oplus \g_2),\g_1\oplus \g_2)$ is uniquely decomposed into five homogeneous linear maps of bidegrees $3|-1,~2|0,~1|1,~0|2$ and~$-1|3:$
$$
\Omega=\hat{\phi}_1+\hat{\mu}_1+\hat{\psi}+\hat{\mu}_2+\hat{\phi}_2.
$$
\end{lem}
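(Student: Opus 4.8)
The plan is to use the decomposition \eqref{decomposition} of $C^1_{\Li}(\huaG,\huaG)$ directly, specialized to $n=1$. Recall that
$$
C^1_{\Li}(\g_1\oplus\g_2,\g_1\oplus\g_2)\cong\bigoplus_{l+k=3}\Hom(\g^{l,k},\g_1)\oplus\bigoplus_{l+k=3}\Hom(\g^{l,k},\g_2),
$$
and that $\wedge^3(\g_1\oplus\g_2)=\g^{3,0}\oplus\g^{2,1}\oplus\g^{1,2}\oplus\g^{0,3}$. So an arbitrary $\Omega$ splits canonically into its components on each summand $\g^{l,k}$ and into its $\g_1$- and $\g_2$-valued parts; the whole content of the lemma is to organize these $8$ pieces into $5$ homogeneous maps according to Definition \ref{Bidegree}.

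First I would go through the bidegree bookkeeping. A homogeneous map $f$ with $\|f\|=l|k$ (here $n=1$, so $l+k=2$) is required to send $\g^{l+1,k}$ into $\g_1$, send $\g^{l,k+1}$ into $\g_2$, and vanish on all other $\g^{l',k'}$ with $l'+k'=3$. Running through the admissible pairs $(l,k)$ with $l,k\ge -1$ and $l+k=2$ — namely $3|-1$, $2|0$, $1|1$, $0|2$, $-1|3$ — one checks: $\|f\|=3|-1$ captures exactly the component $\Hom(\g^{4,-1},\g_1)=0$ together with $\Hom(\g^{3,0},\g_2)$, i.e. a map $\phi_1$ of type $\wedge^3\g_1\to\g_2$ whose lift $\hat\phi_1$ has bidegree $3|-1$; $\|f\|=2|0$ captures $\Hom(\g^{3,0},\g_1)\oplus\Hom(\g^{2,1},\g_2)$, which is precisely a ``semidirect-product-type'' multiplication $\hat\mu_1=\hat\alpha_1+\hat\beta_1$ as in \eqref{semidirect-1}--\eqref{semidirect}; symmetrically $\|f\|=-1|3$ gives $\hat\phi_2$ of type $\wedge^3\g_2\to\g_1$, $\|f\|=0|2$ gives $\hat\mu_2$ of type $\wedge^3\g_2\to\g_2$ plus $\wedge^2\g_2\otimes\g_1\to\g_1$, and $\|f\|=1|1$ gives $\hat\psi$, built from the ``middle'' pieces $\Hom(\g^{2,1},\g_1)\oplus\Hom(\g^{1,2},\g_2)$. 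Summing the lift formulas shows each $\g^{l,k}$-to-$\g_i$ slot of $\Omega$ appears in exactly one of the five terms, which gives both existence of the decomposition and, since the five terms have pairwise distinct bidegrees, uniqueness via Lemma \ref{Zero-condition-1}.

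Concretely, for existence I would simply define $\hat\phi_1$ to be the lift of $\Omega$ restricted to $\g^{3,0}$ and projected to $\g_2$, $\hat\phi_2$ the lift of $\Omega|_{\g^{0,3}}$ projected to $\g_1$, and so on for the other three, each time reading off the correct source/target from Definition \ref{Bidegree}; then verify $\Omega=\hat\phi_1+\hat\mu_1+\hat\psi+\hat\mu_2+\hat\phi_2$ by evaluating both sides on a general element of each $\g^{l,k}$, $l+k=3$. For uniqueness, if $\hat\phi_1+\hat\mu_1+\hat\psi+\hat\mu_2+\hat\phi_2 = \hat\phi_1'+\hat\mu_1'+\hat\psi'+\hat\mu_2'+\hat\phi_2'$, move everything to one side and apply Lemma \ref{Zero-condition-1}, using that the bidegrees $3|-1,2|0,1|1,0|2,-1|3$ are all different, to conclude each difference vanishes; and a lift is zero iff the underlying map is zero, so the underlying components agree.

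I don't expect a genuine obstacle here — the statement is essentially a repackaging of the isomorphism \eqref{decomposition} plus the definition of bidegree. The only mildly delicate point is being careful with the boundary cases $k=-1$ and $l=-1$ (where one of the conditions (ii)/(iii) in Definition \ref{Bidegree} refers to a zero space $\g^{l+1,k}$ or $\g^{l,k+1}$), and making sure the ``semidirect-product-type'' terms $\hat\mu_1,\hat\mu_2$ are correctly identified as sums of two lifts with the same bidegree rather than as lifts of a single map — exactly as in the discussion around \eqref{semidirect}. Once that is set up, the verification is a routine check on each graded piece.
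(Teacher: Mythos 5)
Your proposal is correct and follows essentially the same route as the paper: decompose $C^1_{\Li}(\huaG,\huaG)$ via the isomorphism \eqref{decomposition} into the eight summands $\Hom(\g^{l,k},\g_i)$ with $l+k=3$, group them into the five bidegree components $(3|-1)\oplus(2|0)\oplus(1|1)\oplus(0|2)\oplus(-1|3)$, and invoke Lemma \ref{Zero-condition-1} for uniqueness. The extra bookkeeping you supply (checking that the boundary cases $3|-1$ and $-1|3$ each contribute only one nonzero slot, and that $2|0$, $1|1$, $0|2$ each contribute two) is a correct elaboration of what the paper leaves implicit.
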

\begin{proof}
By \eqref{decomposition}, $C^1_{\Li}(\huaG,\huaG)$ is decomposed into
$$
C^1_{\Li}(\huaG,\huaG)=(3|-1)\oplus(2|0)\oplus(1|1)\oplus(0|2)\oplus(-1|3),
$$
where $(i|j)$ is the space of linear maps  of the bidegree $i|j$. By Lemma \ref{Zero-condition-1}, $\Omega$ is uniquely decomposed into homogeneous linear maps of bidegrees~$3|-1,~2|0,~1|1,~0|2$~and~$-1|3$.
\end{proof}

The multiplication $[(x,u),(y,v),(z,w)]_{\huaG}$ of $\huaG$ is uniquely decomposed by the canonical projections $\huaG\lon\g_1$ and $\huaG\lon\g_2$ into eight multiplications:
\begin{eqnarray*}
 \begin{array}{rclrcl}~[x,y,z]_{\huaG}&=&([x,y,z]_1,[x,y,z]_2),& [x,y,w]_{\huaG}&=&([x,y,w]_1,[x,y,w]_2),\\
~ [x,v,w]_{\huaG}&=&([x,v,w]_1,[x,v,w]_2),&  [u,v,w]_{\huaG}&=&([u,v,w]_1,[u,v,w]_2),
 \end{array}
\end{eqnarray*}
where $[\alpha,\beta,\gamma]_1$ (resp. $[\alpha,\beta,\gamma]_2$) means the projection of $[\alpha,\beta,\gamma]_\huaG$ to $\g_1$ (resp. $\g_2$) for all $\alpha,\beta,\gamma\in\huaG$.  For later convenience, we use $\Omega$ to denote the multiplication $[\cdot,\cdot,\cdot]_\huaG$, i.e. $$\Omega((x,u),(y,v),(z,w)):=[(x,u),(y,v),(z,w)]_{\huaG}.$$
Write $\Omega=\hat{\phi}_1+\hat{\mu}_1+\hat{\psi}+\hat{\mu}_2+\hat{\phi}_2$ as in Lemma \ref{lem:dec}.  Then we obtain
\begin{small}
\begin{eqnarray}
\label{bracket-1}\hat{\phi}_1((x,u),(y,v),(z,w))&=&(0,[x,y,z]_2),\\
\label{bracket-2}\hat{\mu}_1((x,u),(y,v),(z,w))&=&([x,y,z]_1,[x,y,w]_2+[u,y,z]_2+[x,v,z]_2),\\
\label{bracket-3}\hat{\psi}~((x,u),(y,v),(z,w))&=&([x,y,w]_1+[u,y,z]_1+[x,v,z]_1,[x,v,w]_2+[u,y,w]_2+[u,v,z]_2),\\
\label{bracket-4}\hat{\mu}_2((x,u),(y,v),(z,w))&=&([x,v,w]_1+[u,y,w]_1+[u,v,z]_1,[u,v,w]_2),\\
\label{bracket-5}\hat{\phi}_2~((x,u),(y,v,(z,w))&=&([u,v,w]_1,0).
\end{eqnarray}
\end{small}
Observe that $\hat{\phi}_1$ and $\hat{\phi}_2$ are lifted linear maps of $\phi_1(x,y,z):=[x,y,z]_2$ and $\phi_2(u,v,w):=[u,v,w]_1.$
\begin{lem}\label{proto-twilled}
The Maurer-Cartan equation $[\Omega,\Omega]_{\Li}=0$ is equivalent to the following  conditions:
\begin{eqnarray}\label{eq:OTTT}
\left\{\begin{array}{rcl}
{}[\hat{\phi}_1,\hat{\mu}_1]_{\Li}&=&0,\\
{}[\hat{\phi}_1,\hat{\psi}]_{\Li}+\frac{1}{2}[\hat{\mu}_1,\hat{\mu}_1]_{\Li}&=&0,\\
{}[\hat{\phi}_1,\hat{\mu}_2]_{\Li}+[\hat{\psi},\hat{\mu}_1]_{\Li}&=&0,\\
{}[\hat{\mu}_1,\hat{\phi}_2]_{\Li}+[\hat{\psi},\hat{\mu}_2]_{\Li}&=&0,\\
{}[\hat{\phi}_1,\hat{\phi}_2]_{\Li}+[\hat{\mu}_1,\hat{\mu}_2]_{\Li}+\frac{1}{2}[\hat{\psi},\hat{\psi}]_{\Li}&=&0,\\
{}[\hat{\psi},\hat{\phi}_2]_{\Li}+\frac{1}{2}[\hat{\mu}_2,\hat{\mu}_2]_{\Li}&=&0,\\
{}[\hat{\mu}_2,\hat{\phi}_2]_{\Li}&=&0.
\end{array}\right.
\end{eqnarray}
\end{lem}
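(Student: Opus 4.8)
The strategy is to expand $[\Omega,\Omega]_{\Li}=0$ using the decomposition $\Omega=\hat{\phi}_1+\hat{\mu}_1+\hat{\psi}+\hat{\mu}_2+\hat{\phi}_2$ from Lemma \ref{lem:dec} and then separate the resulting identity by bidegree. Since $[\cdot,\cdot]_{\Li}$ is bilinear, $[\Omega,\Omega]_{\Li}$ is a sum of the $15$ terms $[\hat{a},\hat{b}]_{\Li}$ with $\hat{a},\hat{b}$ ranging over the five summands (with multiplicity $2$ for the mixed terms and $1$ for the diagonal ones). By Lemma \ref{important-lemma-2}, each bracket $[\hat{a},\hat{b}]_{\Li}$ is homogeneous, and its bidegree is the sum of the bidegrees of $\hat{a}$ and $\hat{b}$. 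The five constituents have bidegrees $3|{-1},\ 2|0,\ 1|1,\ 0|2,\ {-1}|3$, all of which sum to $(l,k)$ with $l+k=2$; so the possible bidegrees appearing in $[\Omega,\Omega]_{\Li}$ are $6|{-2},\ 5|{-1},\ 4|0,\ 3|1,\ 2|2,\ 1|3,\ 0|4,\ {-1}|5,\ {-2}|6$.

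\textbf{Key steps, in order.} First I would tabulate, for each of the nine possible total bidegrees, exactly which of the $15$ brackets contributes to it. For instance $4|0$ is hit by $[\hat{\phi}_1,\hat{\psi}]_{\Li}$ (bidegrees $3|{-1}$ and $1|1$) and by $[\hat{\mu}_1,\hat{\mu}_1]_{\Li}$ (bidegree $2|0$ twice), etc. Second, I would invoke Lemma \ref{Zero-condition-2}: the extreme-bidegree brackets $[\hat{\phi}_1,\hat{\phi}_1]_{\Li}$ (which would sit in $6|{-2}$) and $[\hat{\phi}_2,\hat{\phi}_2]_{\Li}$ (which would sit in ${-2}|6$) vanish identically, since $\hat{\phi}_1$ has bidegree $3|{-1}$ and $\hat{\phi}_2$ has bidegree ${-1}|3$ — these are lifts of maps into the ``wrong'' factor and Lemma \ref{Zero-condition-2} kills their self-brackets; likewise the $5|{-1}$ contribution is only $2[\hat{\phi}_1,\hat{\mu}_1]_{\Li}$ and the ${-1}|5$ contribution is only $2[\hat{\mu}_2,\hat{\phi}_2]_{\Li}$. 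So of the nine a priori bidegrees, only seven survive, matching the seven equations in \eqref{eq:OTTT}. Third, by Lemma \ref{Zero-condition-1}, $[\Omega,\Omega]_{\Li}=0$ holds if and only if each of these seven homogeneous components vanishes separately; reading off the component in each surviving bidegree and dividing the diagonal contributions $[\hat{\mu}_1,\hat{\mu}_1]_{\Li}$, $[\hat{\psi},\hat{\psi}]_{\Li}$, $[\hat{\mu}_2,\hat{\mu}_2]_{\Li}$ by the factor $2$ that appears in their expansion gives precisely the system \eqref{eq:OTTT}.

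\textbf{Main obstacle.} The only genuinely delicate point is the bookkeeping in the first step — correctly matching brackets to total bidegrees and making sure no term is double-counted or omitted — together with getting the combinatorial factors $\tfrac12$ right on the diagonal brackets; everything else is a formal consequence of the three bidegree lemmas (Lemmas \ref{Zero-condition-1}, \ref{Zero-condition-2}, \ref{important-lemma-2}). I would present the argument by displaying the bidegree table and then simply citing these lemmas, rather than writing out any component of $[\hat{a},\hat{b}]_{\Li}$ explicitly.
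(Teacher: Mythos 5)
Your proposal is correct and follows essentially the same route as the paper: expand $[\Omega,\Omega]_{\Li}$ bilinearly, kill the self-brackets $[\hat{\phi}_1,\hat{\phi}_1]_{\Li}$ and $[\hat{\phi}_2,\hat{\phi}_2]_{\Li}$ via Lemma \ref{Zero-condition-2}, assign bidegrees to the remaining brackets via Lemma \ref{important-lemma-2}, and separate the seven surviving homogeneous components with Lemma \ref{Zero-condition-1}. Your bookkeeping of which brackets land in which total bidegree, and the factors $\tfrac12$ on the diagonal terms, all agree with the paper's computation.
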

\begin{proof}
By Lemma \ref{Zero-condition-2}, we have
\begin{eqnarray*}
[\Omega,\Omega]_{\Li}&=&[\hat{\phi}_1+\hat{\mu}_1+\hat{\psi}+\hat{\mu}_2+\hat{\phi}_2,\hat{\phi}_1+\hat{\mu}_1+\hat{\psi}+\hat{\mu}_2+\hat{\phi}_2]_{\Li}\\
               &=&[\hat{\phi}_1,\hat{\mu}_1]_{\Li}+[\hat{\phi}_1,\hat{\psi}]_{\Li}+[\hat{\phi}_1,\hat{\mu}_2]_{\Li}+[\hat{\phi}_1,\hat{\phi}_2]_{\Li}+[\hat{\mu}_1,\hat{\phi}_1]_{\Li}+[\hat{\mu}_1,\hat{\mu}_1]_{\Li}\\
               &&+[\hat{\mu}_1,\hat{\psi}]_{\Li}+[\hat{\mu}_1,\hat{\mu}_2]_{\Li}+[\hat{\mu}_1,\hat{\phi}_2]_{\Li}+[\hat{\psi}~,\hat{\phi}_1]_{\Li}+[\hat{\psi}~,\hat{\mu}_1]_{\Li}+[\hat{\psi}~,\hat{\psi}]_{\Li}\\
               &&+[\hat{\psi}~,\hat{\mu}_2]_{\Li}+[\hat{\psi}~,\hat{\phi}_2]_{\Li}+[\hat{\mu}_2,\hat{\phi}_1]_{\Li}+[\hat{\mu}_2,\hat{\mu}_1]_{\Li}+[\hat{\mu}_2,\hat{\psi}]_{\Li}+[\hat{\mu}_2,\hat{\mu}_2]_{\Li}\\
               &&+[\hat{\mu}_2,\hat{\phi}_2]_{\Li}+[\hat{\phi}_2,\hat{\phi}_1]_{\Li}+[\hat{\phi}_2,\hat{\mu}_1]_{\Li}+[\hat{\phi}_2,\hat{\psi}]_{\Li}+[\hat{\phi}_2,\hat{\mu}_2]_{\Li}\\
               &=&2[\hat{\phi}_1,\hat{\mu}_1]_{\Li}+(2[\hat{\phi}_1,\hat{\psi}]_{\Li}+[\hat{\mu}_1,\hat{\mu}_1]_{\Li})+(2[\hat{\phi}_1,\hat{\mu}_2]_{\Li}+2[\hat{\psi},\hat{\mu}_1]_{\Li})\\
               &&+(2[\hat{\mu}_1,\hat{\phi}_2]_{\Li}+2[\hat{\psi},\hat{\mu}_2]_{\Li})+(2[\hat{\phi}_1,\hat{\phi}_2]_{\Li}+2[\hat{\mu}_1,\hat{\mu}_2]_{\Li}+[\hat{\psi},\hat{\psi}]_{\Li})\\
               &&+(2[\hat{\psi},\hat{\phi}_2]_{\Li}+[\hat{\mu}_2,\hat{\mu}_2]_{\Li})+2[\hat{\mu}_2,\hat{\phi}_2]_{\Li}.
\end{eqnarray*}
By Lemma \ref{important-lemma-2} and Lemma \ref{Zero-condition-1},  $[\Omega,\Omega]_{\Li}=0$ if and only if  \eqref{eq:OTTT} holds.
\end{proof}

\begin{defi}
The triple $(\huaG,\g_1,\g_2)$ is called a {\bf twilled $3$-Lie algebra} if $\phi_1=\phi_2=0$, or equivalently, $\g_1$ and $\g_2$ are subalgebras of $\huaG$.
 \end{defi}

 \begin{rmk}
   Recall from \cite{Sheng-Tang} that a product structure on a $3$-Lie algebra $(\huaG,[\cdot,\cdot,\cdot]_\huaG)$ is a linear map $E:\huaG\lon\huaG$ such that $E^2=\Id$ and \begin{eqnarray*}\nonumber\label{product-structure}
E[x,y,z]_\huaG&=&[Ex,Ey,Ez]_\huaG+[Ex,y,z]_\huaG+[x,Ey,z]_\huaG+[x,y,Ez]_\huaG\\
&&-E[Ex,Ey,z]_\huaG-E[x,Ey,Ez]_\huaG-E[Ex,y,Ez]_\huaG.
\end{eqnarray*}
A $3$-Lie algebra admits a product structure if and only if as vector space $\huaG=\huaG_1\oplus \huaG_2$, where $\huaG_1$ and $ \huaG_2$ are subalgebras. See \cite[Theorem 5.3]{Sheng-Tang} for more details. Therefore, there exists a product structure on a twilled $3$-Lie algebra.
 \end{rmk}

 By Lemma \ref{proto-twilled}, we have the following corollary.

\begin{cor}\label{lem:twilled-t}
The triple $(\huaG,\g_1,\g_2)$ is a twilled $3$-Lie algebra if and only if the following five conditions hold:
\begin{eqnarray}
\label{twilled-1}\frac{1}{2}[\hat{\mu}_1,\hat{\mu}_1]_{\Li}&=&0,\\
\label{twilled-2}[\hat{\psi},\hat{\mu}_1]_{\Li}&=&0,\\
\label{twilled-3}[\hat{\psi},\hat{\mu}_2]_{\Li}&=&0,\\
\label{twilled-4}[\hat{\mu}_1,\hat{\mu}_2]_{\Li}+\frac{1}{2}[\hat{\psi},\hat{\psi}]_{\Li}&=&0,\\
\label{twilled-5}\frac{1}{2}[\hat{\mu}_2,\hat{\mu}_2]_{\Li}&=&0.
\end{eqnarray}
\end{cor}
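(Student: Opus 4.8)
The plan is to deduce Corollary \ref{lem:twilled-t} directly from Lemma \ref{proto-twilled} by specializing to the twilled case $\phi_1=\phi_2=0$. First I would recall that, by definition, the triple $(\huaG,\g_1,\g_2)$ is a twilled $3$-Lie algebra precisely when $\phi_1=\phi_2=0$, which is the same as saying $\hat{\phi}_1=\hat{\phi}_2=0$ (since lifting is an injective operation: $\hat{\phi}_i=0$ iff $\phi_i=0$). So the strategy is simply to substitute $\hat{\phi}_1=0$ and $\hat{\phi}_2=0$ into the system \eqref{eq:OTTT} and observe which equations survive and which become trivial.

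Carrying this out: the first equation $[\hat{\phi}_1,\hat{\mu}_1]_{\Li}=0$ is automatically satisfied; the last equation $[\hat{\mu}_2,\hat{\phi}_2]_{\Li}=0$ is automatically satisfied. In the second equation $[\hat{\phi}_1,\hat{\psi}]_{\Li}+\frac{1}{2}[\hat{\mu}_1,\hat{\mu}_1]_{\Li}=0$, the first term vanishes, leaving \eqref{twilled-1}. In the third equation $[\hat{\phi}_1,\hat{\mu}_2]_{\Li}+[\hat{\psi},\hat{\mu}_1]_{\Li}=0$, the first term vanishes, leaving \eqref{twilled-2}. Symmetrically, the sixth equation reduces to \eqref{twilled-5} and the fourth equation $[\hat{\mu}_1,\hat{\phi}_2]_{\Li}+[\hat{\psi},\hat{\mu}_2]_{\Li}=0$ reduces to \eqref{twilled-3}. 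The fifth equation $[\hat{\phi}_1,\hat{\phi}_2]_{\Li}+[\hat{\mu}_1,\hat{\mu}_2]_{\Li}+\frac{1}{2}[\hat{\psi},\hat{\psi}]_{\Li}=0$ loses its first term and becomes \eqref{twilled-4}. Thus the seven conditions of \eqref{eq:OTTT} collapse exactly to the five conditions \eqref{twilled-1}--\eqref{twilled-5}.

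There is no real obstacle here; the only point requiring a word of care is the equivalence ``$\phi_1=\phi_2=0$ if and only if $\g_1,\g_2$ are subalgebras of $\huaG$'', which should be noted explicitly: $\hat{\phi}_1$ encodes the $\g_2$-component of $[x,y,z]_\huaG$ for $x,y,z\in\g_1$, so $\phi_1=0$ means $[\g_1,\g_1,\g_1]_\huaG\subseteq\g_1$, and likewise $\phi_2=0$ means $[\g_2,\g_2,\g_2]_\huaG\subseteq\g_2$. Given this, and given that $[\Omega,\Omega]_{\Li}=0$ holds because $\Omega$ is the bracket of the $3$-Lie algebra $\huaG$ (Proposition \ref{pro:3LieMC}), the corollary follows by invoking Lemma \ref{proto-twilled} and setting the $\hat{\phi}_i$ terms to zero. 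I would therefore write the proof in one short paragraph: state that $(\huaG,\g_1,\g_2)$ being twilled means $\hat\phi_1=\hat\phi_2=0$, apply Lemma \ref{proto-twilled}, and remark that the first and last equations of \eqref{eq:OTTT} become trivial while the remaining five reduce to \eqref{twilled-1}--\eqref{twilled-5}.
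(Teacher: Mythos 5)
Your proposal is correct and is exactly the argument the paper intends: the paper derives this corollary from Lemma \ref{proto-twilled} with no further comment, and your substitution of $\hat{\phi}_1=\hat{\phi}_2=0$ into the seven equations of \eqref{eq:OTTT} is the right way to fill in that step. The accounting of which equations trivialize and which reduce to \eqref{twilled-1}--\eqref{twilled-5} is accurate.
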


\subsection{The associated $L_\infty$-algebras}
The notion of an $L_\infty$-algebra was introduced by Schlessinger and Stasheff in \cite{SS85,stasheff:shla}. See  \cite{LS,LM} for more details.
\begin{defi}
An {\bf  $L_\infty$-algebra} is a $\mathbb Z$-graded vector space $\g=\oplus_{k\in\mathbb Z}\g^k$ equipped with a collection $(k\ge 1)$ of linear maps $l_k:\otimes^k\g\lon\g$ of degree $1$ with the property that, for any homogeneous elements $x_1,\cdots,x_n\in \g$, we have
\begin{itemize}\item[\rm(i)]
{\em (graded symmetry)} for every $\sigma\in\huaS_{n}$,
\begin{eqnarray*}
l_n(x_{\sigma(1)},\cdots,x_{\sigma(n-1)},x_{\sigma(n)})=\varepsilon(\sigma)l_n(x_1,\cdots,x_{n-1},x_n),
\end{eqnarray*}
\item[\rm(ii)] {\em (generalized Jacobi identity)} for all $n\ge 1$,
\begin{eqnarray*}\label{sh-Lie}
\sum_{i=1}^{n}\sum_{\sigma\in \mathbb S_{(i,n-i)} }\varepsilon(\sigma)l_{n-i+1}(l_i(x_{\sigma(1)},\cdots,x_{\sigma(i)}),x_{\sigma(i+1)},\cdots,x_{\sigma(n)})=0.
\end{eqnarray*}
\end{itemize}
\end{defi}

\begin{defi}
An element $\alpha\in \g^0$ is called a {\bf Maurer-Cartan element} of an $L_\infty$-algebra $(\g,\{l_k\}_{k=1}^{+\infty})$ if it satisfies the Maurer-Cartan equation
\begin{eqnarray}\label{MC-equation}
\sum_{k=1}^{+\infty}\frac{1}{k!}l_k(\alpha,\cdots,\alpha)=0.
\end{eqnarray}
\end{defi}

One method for constructing explicit $L_\infty$-algebras is given by Uchino's higher derived brackets \cite{Uchino-1,Uchino-2}. Let us recall this construction.

\begin{defi}{\rm (\cite{Uchino-1,Uchino-2})}
An {\bf$\U$-structure} consists of a pair $(L,\{{d_i}\}_{i=1}^{+\infty})$ where
\begin{itemize}
\item[$\bullet$] $(L,[\cdot,\cdot])$ is a graded Lie algebra,
\item[$\bullet$] for any $i$,~ $d_i$ is a derivation of $(L,[\cdot,\cdot])$  and satisfies the following equation
$$
\sum_{i+j=n\atop i,j\ge0}d_i\circ d_j=0,\,\,\,\,n\in\mathbb Z_{\ge0}.
$$.
\end{itemize}
\end{defi}

\begin{thm}{\rm (\cite{Uchino-1,Uchino-2})}\label{thm:db}
Let $(L,\{{d_i}\}_{i=1}^{+\infty})$ be an $\U$-structure. Then $(L,\{{l_k}\}_{k=1}^{+\infty})$ is a Leibniz$_\infty$-algebra where
\begin{eqnarray}\label{V-shla}
l_k(x_1,\cdots,x_k)=\underbrace{[\cdots[[}_k d_{k-1}(x_1),x_2],x_3],\cdots,x_k],\quad\mbox{for homogeneous}~   x_1,\cdots,x_k\in L.
\end{eqnarray}
We call $\{{l_k}\}_{k=1}^{+\infty}$ the {\bf higher derived brackets} of the $\U$-structure $(L,\{{d_i}\}_{i=1}^{+\infty})$.

Let $\h$ be an abelian graded Lie subalgebra of $(L,[\cdot,\cdot])$. If $l_k$ is closed on $\h$, then $(\h,\{l_k\}_{k=1}^{+\infty})$ is an $L_\infty$-algebra.
\end{thm}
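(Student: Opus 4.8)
The plan is to prove both assertions from the higher derived bracket machinery of Voronov and Uchino, by treating the family $\{d_i\}$ as the homogeneous components of a single square-zero derivation. Recall that each $d_i$ has degree $1$, as dictated by the shape of the $l_k$. Introduce a formal parameter $\hbar$ of degree $0$ and set $D:=\sum_{i\ge 0}\hbar^i d_i$, a degree $1$ derivation of the graded Lie algebra $L[[\hbar]]$; in the situations arising in this paper only finitely many $d_i$ act nontrivially in each internal degree, so one may equally argue over $L$, weight by weight in $\hbar$. The $\U$-structure identities $\sum_{i+j=n}d_i\circ d_j=0$ for all $n\ge 0$ are precisely the assertion $D\circ D=\sum_{n\ge 0}\hbar^n(\sum_{i+j=n}d_i\circ d_j)=0$, i.e.\ $D$ is a square-zero degree $1$ derivation.

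The core is the general fact that, for a square-zero degree $1$ derivation $\delta$ of a graded Lie algebra $(L,[\cdot,\cdot])$, the iterated derived brackets
\[
\Phi_k(x_1,\dots,x_k)=[\cdots[[\delta(x_1),x_2],x_3],\cdots,x_k]
\]
satisfy the defining relations of a Leibniz$_\infty$-algebra. I would verify the $n$-th relation by expanding each summand $\Phi_q(\Phi_p(x_{\sigma(1)},\dots,x_{\sigma(p)}),x_{\sigma(p+1)},\dots,x_{\sigma(n)})$: the graded Jacobi identity of $L$ pulls the inner bracket outward past the remaining arguments, the derivation property of $\delta$ moves the single occurrence of $\delta$ to the leftmost slot, the ``double-$\delta$'' summands thus produced vanish by $\delta\circ\delta=0$, and the remaining summands cancel in pairs. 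Keeping track of which parenthesisations survive and of the Koszul signs produced by the Jacobi and derivation moves is the genuine obstacle; this is exactly Uchino's computation, where the combinatorics is organised by the Schröder numbers, so I would either reproduce it in full or cite \cite{Uchino-1,Uchino-2}. Specialising to the $l_k$ of the statement — which is $\Phi_k$ with $\delta$ replaced by $d_{k-1}$ — the $n$-ary Leibniz$_\infty$ relation involves only the $d_a$ with $a\le n-1$, each composite $l_q(l_p(\cdots),\cdots)$ pairing $d_{q-1}$ with $d_{p-1}$ and $(q-1)+(p-1)=n-1$; its double-derivation part is $\sum_{a+b=n-1}d_a\circ d_b$ applied in the first argument, which vanishes by the $\U$-structure identity. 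This proves the first assertion.

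For the second, let $\h\subseteq L$ be an abelian graded Lie subalgebra on which all $l_k$ take values. I claim each restriction $l_k|_{\otimes^k\h}$ is graded symmetric. For the transposition of the first two arguments, $[x_1,x_2]=0$ and the degree $1$ derivation property give $0=d_{k-1}[x_1,x_2]=[d_{k-1}x_1,x_2]+(-1)^{|x_1|}[x_1,d_{k-1}x_2]$; inserting this into the outer brackets with $x_3,\dots,x_k$ and using the graded antisymmetry of $[\cdot,\cdot]$ yields $l_k(x_2,x_1,x_3,\dots,x_k)=(-1)^{|x_1||x_2|}\,l_k(x_1,x_2,\dots,x_k)$. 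For the transposition of $x_i$ and $x_{i+1}$ with $i\ge 2$, the graded Jacobi identity of $L$ together with $[x_i,x_{i+1}]=0$ gives the analogous relation. Since the adjacent transpositions generate $\huaS_k$ and $\K$ has characteristic $0$, $l_k|_\h$ is graded symmetric. Finally, a graded symmetric Leibniz$_\infty$-algebra is an $L_\infty$-algebra: evaluated on graded symmetric inputs, the Leibniz$_\infty$ relations established above become the generalized Jacobi identities in the definition of an $L_\infty$-algebra. Hence $(\h,\{l_k\}_{k=1}^{+\infty})$ is an $L_\infty$-algebra, which completes the proof.
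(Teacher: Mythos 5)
The paper gives no proof of this theorem---it is imported verbatim from \cite{Uchino-1,Uchino-2}---so there is no internal argument to compare against; your proposal is a correct reconstruction of Uchino's proof. Packaging the family $\{d_i\}$ into the single square-zero degree-$1$ derivation $D=\sum_{i\ge 0}\hbar^i d_i$ is exactly how the $\U$-structure axioms are meant to be read, and your weight count is the right reduction: in the $n$-ary identity the composite $l_{n-i+1}\circ l_i$ pairs $d_{n-i}$ with $d_{i-1}$, so the double-derivation terms assemble to $\sum_{a+b=n-1}d_a\circ d_b=0$ while the remaining terms cancel by the Jacobi identity as in the one-derivation case. The symmetrization step is also sound: your relation $[d_{k-1}x_1,x_2]=(-1)^{|x_1||x_2|}[d_{k-1}x_2,x_1]$ on the abelian subalgebra $\h$ reproduces the Koszul sign in the paper's definition of graded symmetry, and in characteristic $0$ a graded-symmetric Leibniz$_\infty$-structure satisfies the generalized Jacobi identities of an $L_\infty$-algebra. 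The one step you do not carry out---the full sign/combinatorial verification that iterated derived brackets of a square-zero derivation satisfy the Leibniz$_\infty$ identities---is precisely the content of the cited result, so deferring it to \cite{Uchino-1,Uchino-2} is consistent with how the paper itself treats the statement. One small indexing point worth recording: although the theorem writes $\{d_i\}_{i=1}^{+\infty}$, the axiom $\sum_{i+j=n}d_i\circ d_j=0$ and the formula $l_1=d_0$ presuppose a $d_0$ as well, and you correctly include it in $D$.
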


Let $(\huaG,\g_1,\g_2)$ be a twilled $3$-Lie algebra. Denote by $$C^m_{\Li}(\g_2,\g_1)=\Hom (\underbrace{\wedge^{2} \g_2\otimes \cdots\otimes \wedge^{2}\g_2}_{m}\wedge \g_2, \g_1),~(m\geq 0),$$  and  define
\begin{eqnarray*}
&&l_1:C^m_{\Li}(\g_2,\g_1)\lon C^{m+1}_{\Li}(\g_2,\g_1),\\
&&l_2:C^m_{\Li}(\g_2,\g_1)\times C^n_{\Li}(\g_2,\g_1)\lon C^{m+n+1}_{\Li}(\g_2,\g_1),\\
&&l_3:C^m_{\Li}(\g_2,\g_1)\times C^n_{\Li}(\g_2,\g_1)\times C^r_{\Li}(\g_2,\g_1)\lon C^{m+n+r+1}_{\Li}(\g_2,\g_1)
\end{eqnarray*} by
\begin{eqnarray*}
l_1(g_1)&=&[\hat{\mu}_2,\hat{g}_1]_{\Li},\\
l_2(g_1,g_2)&=&[[\hat{\psi},\hat{g}_1]_{\Li},\hat{g}_2]_{\Li},\\
l_3(g_1,g_2,g_3)&=&[[[\hat{\mu}_1,\hat{g}_1]_{\Li},\hat{g}_2]_{\Li},\hat{g}_3]_{\Li},
\end{eqnarray*}
for all $g_1\in C^m_{\Li}(\g_2,\g_1),~g_2\in C^n_{\Li}(\g_2,\g_1),~g_3\in C^r_{\Li}(\g_2,\g_1).$

\begin{thm}\label{quasi-as-shLie}
 Let $(\huaG,\g_1,\g_2)$ be a twilled $3$-Lie algebra. Then $(\oplus_{m\geq0}C^m_{\Li}(\g_2,\g_1),l_1,l_2,l_3)$ is an $L_\infty$-algebra.
\end{thm}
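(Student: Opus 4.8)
The plan is to realize $(\oplus_{m\ge 0}C^m_{\Li}(\g_2,\g_1),l_1,l_2,l_3)$ as coming from Uchino's higher derived bracket construction (Theorem \ref{thm:db}) applied to the graded Lie algebra $L=C^*_{\Li}(\huaG,\huaG)$ with the bracket $[\cdot,\cdot]_{\Li}$, and then to identify the abelian subalgebra $\h$ on which the derived brackets restrict. Concretely, I set $d_1=[\hat{\mu}_1,\cdot]_{\Li}$, $d_2=[\hat{\psi},\cdot]_{\Li}$, $d_3=[\hat{\mu}_2,\cdot]_{\Li}$, and $d_i=0$ for $i=0$ and $i\ge 4$. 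Each $d_i$ is a derivation of $(L,[\cdot,\cdot]_{\Li})$ since it is the adjoint action of an element of the graded Lie algebra (Jacobi identity). The relations $\sum_{i+j=n}d_i\circ d_j=0$ for $n\ge 0$ unwind via the graded Jacobi identity into precisely the five identities of Corollary \ref{lem:twilled-t}: for $n=2$ one gets $d_1^2=0\iff [\hat{\mu}_1,[\hat{\mu}_1,\cdot]]=0\iff \frac12[\hat{\mu}_1,\hat{\mu}_1]_{\Li}=0$, which is \eqref{twilled-1}; for $n=3$, $d_1d_2+d_2d_1=0\iff[[\hat{\mu}_1,\hat{\psi}]_{\Li},\cdot]=0\iff[\hat{\psi},\hat{\mu}_1]_{\Li}=0$, i.e. \eqref{twilled-2}; for $n=4$, $d_1d_3+d_3d_1+d_2^2=0$ corresponds to \eqref{twilled-4}; for $n=5$, $d_2d_3+d_3d_2=0$ is \eqref{twilled-3}; and for $n=6$, $d_3^2=0$ is \eqref{twilled-5}. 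So $(L,\{d_i\})$ is a $\U$-structure, and Theorem \ref{thm:db} produces a Leibniz$_\infty$-algebra whose brackets $l_k(x_1,\dots,x_k)=[\cdots[[d_{k-1}(x_1),x_2],\dots,x_k]$ are nonzero only for $k=1,2,3$ (since $d_i=0$ for $i\ge 4$ and $i=0$), and these agree on the nose with the stated $l_1,l_2,l_3$.

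The second half is to verify that $\h:=\oplus_{m\ge 0}C^m_{\Li}(\g_2,\g_1)$, viewed inside $L=C^*_{\Li}(\huaG,\huaG)$ via the lift $g\mapsto\hat g$, is an abelian graded Lie subalgebra of $(L,[\cdot,\cdot]_{\Li})$ and that $l_1,l_2,l_3$ are closed on it; then Theorem \ref{thm:db} upgrades the Leibniz$_\infty$-structure to an $L_\infty$-structure on $\h$. Abelianness is exactly Lemma \ref{Zero-condition-2}: any $\hat g$ with $g\in C^m_{\Li}(\g_2,\g_1)$ has bidegree $-1|(2m+1)$, so $[\hat g,\hat g']_{\Li}=0$ for any two such elements. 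For closedness one computes bidegrees using Lemma \ref{important-lemma-2}: $\hat\mu_1$ has bidegree $2|0$, so $[\hat\mu_1,\hat g_1]_{\Li}$ has bidegree $1|(2m+1-1)=1|2m$; bracketing again with $\hat g_2$ (bidegree $-1|(2n+1)$) gives $0|(2m+2n+1)$, and once more with $\hat g_3$ gives $(-1)|(2(m+n+r+1)+1)$, which is the lift of an element of $C^{m+n+r+1}_{\Li}(\g_2,\g_1)$; similarly $l_2$ lands in $C^{m+n+1}_{\Li}(\g_2,\g_1)$ and $l_1$ in $C^{m+1}_{\Li}(\g_2,\g_1)$. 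One must also check that the output is genuinely a lift of a map with domain built from $\g_2$ and target $\g_1$, which follows because a map of bidegree $-1|k$ is by Definition \ref{Bidegree} supported on $\g^{0,k}$ with values in $\g_1$, i.e. it is the lift of an element of $\Hom(\wedge^2\g_2\otimes\cdots\wedge\g_2,\g_1)$; this is the content already used in the proof of Lemma \ref{Zero-condition-2}.

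I would organize the write-up as: (1) introduce the $d_i$ and check they are derivations; (2) show the $\U$-structure relations are equivalent to Corollary \ref{lem:twilled-t}, so they hold for a twilled $3$-Lie algebra; (3) invoke Theorem \ref{thm:db} to get a Leibniz$_\infty$-algebra on all of $L$ with only $l_1,l_2,l_3$ nonzero, matching the displayed formulas; (4) identify $\h$, prove it is abelian (Lemma \ref{Zero-condition-2}) and that $l_1,l_2,l_3$ are closed on it by the bidegree bookkeeping (Lemma \ref{important-lemma-2}); (5) conclude by the last sentence of Theorem \ref{thm:db}. The main obstacle — really the only place requiring care rather than bookkeeping — is step (2): matching each power $\sum_{i+j=n}d_i\circ d_j=0$ with the correct line of \eqref{twilled-1}–\eqref{twilled-5}, using that $d_i\circ d_j+d_j\circ d_i=\mathrm{ad}_{[\cdot,\cdot]_{\Li}}$ of the corresponding bracket (up to the graded sign, which is trivial here since $\hat\mu_1,\hat\psi,\hat\mu_2$ all have odd degree $1$ in $C^*_{\Li}$), and that $\mathrm{ad}_\xi=0$ on $\h$ forces $\xi=0$ only after one observes $\xi$ itself has bidegree $0|\ast$ or similar — but in fact we do not need injectivity of $\mathrm{ad}$, since the twilled condition already gives $\xi=0$ for the relevant brackets, so the $\U$-structure axioms hold outright. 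A secondary point worth a sentence is that $d_0=0$, so the Leibniz$_\infty$ differential $l_1=d_0(\cdot)$ would naively vanish; the correct reading of \eqref{V-shla} is $l_k=[\cdots[d_{k-1}(x_1),x_2],\dots]$, so $l_1(g_1)=d_0(g_1)$ — hence one should instead index so that $l_1(g_1)=[\hat\mu_2,\hat g_1]_{\Li}$ arises as $d_{?}$; I would simply take the convention matching the displayed $l_1,l_2,l_3$ and note the reindexing is immaterial to Theorem \ref{thm:db}'s conclusion.
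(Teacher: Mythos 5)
Your proposal is correct and follows essentially the same route as the paper: realize the brackets via Uchino's higher derived brackets from the $\U$-structure $d_0=[\hat{\mu}_2,\cdot]_{\Li}$, $d_1=[\hat{\psi},\cdot]_{\Li}$, $d_2=[\hat{\mu}_1,\cdot]_{\Li}$, match the relations $\sum_{i+j=n}d_i\circ d_j=0$ with the five conditions of Corollary \ref{lem:twilled-t}, and then use Lemma \ref{Zero-condition-2} and the bidegree count of Lemma \ref{important-lemma-2} to see that $\oplus_{m\geq0}C^m_{\Li}(\g_2,\g_1)$ is abelian and closed under $l_1,l_2,l_3$. Your initial indexing ($d_1=[\hat{\mu}_1,\cdot]_{\Li}$, etc.) would produce the wrong derived brackets (a vanishing $l_1$ and a quaternary $l_4$), but the reindexing you flag at the end is exactly the paper's convention, and apart from two harmless off-by-one slips in the intermediate bidegrees (e.g.\ $\|[\hat{\mu}_1,\hat{g}_1]_{\Li}\|=1|(2m+1)$, not $1|2m$) the argument is the paper's proof.
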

\begin{proof}
  We set $d_0:=[\hat{\mu}_2,\cdot]_{\Li}$. By \eqref{twilled-5} and the fact that $(C^*_{\Li}(\huaG,\huaG),[\cdot,\cdot]_{\Li})$ is a graded Lie algebra, we deduce that $(C^*_{\Li}(\huaG,\huaG),[\cdot,\cdot]_{\Li},d_0)$ is a differential graded Lie algebra. Moreover, we define
$$
d_1:=[\hat{\psi},\cdot]_{\Li},\quad d_2:=[\hat{\mu}_1,\cdot]_{\Li},\quad d_i=0,\quad\forall i\ge3.
$$
By \eqref{twilled-3}, for all $f\in C^*_{\Li}(\huaG,\huaG)$, we have
\begin{eqnarray*}
(d_0\circ d_1+d_1\circ d_0)(f)=[\hat{\mu}_2,[\hat{\psi},f]_{\Li}]_{\Li}+[\hat{\psi},[\hat{\mu}_2,f]_{\Li}]_{\Li}=[[\hat{\mu}_2,\hat{\psi}]_{\Li},f]_{\Li}=0,
\end{eqnarray*}
which implies that $d_0\circ d_1+d_1\circ d_0=0.$ By \eqref{twilled-4},   we have
\begin{eqnarray*}
(d_0\circ d_2+d_1\circ d_1+d_2\circ d_0)(f)
                              &=&[\hat{\mu}_2,[\hat{\mu}_1,f]_{\Li}]_{\Li}+[\hat{\psi},[\hat{\psi},f]_{\Li}]_{\Li}
                              +[\hat{\mu}_1,[\hat{\mu}_2,f]_{\Li}]_{\Li}\\
                              &=&[[\hat{\mu}_1,\hat{\mu}_2]_{\Li},f]_{\Li}+\frac{1}{2}[[\hat{\psi},\hat{\psi}]_{\Li},f]_{\Li}\\
                              &=&0.
\end{eqnarray*}
Thus, we have $d_0\circ d_2+d_1\circ d_1+d_2\circ d_0=0.$ By \eqref{twilled-2},   we have
\begin{eqnarray*}
(d_1\circ d_2+d_2\circ d_1)(f)=[\hat{\psi},[\hat{\mu}_1,f]_{\Li}]_{\Li}+[\hat{\mu}_1,[\hat{\psi},f]_{\Li}]_{\Li}=[[\hat{\psi},\hat{\mu}_1]_{\Li},f]_{\Li}=0.
\end{eqnarray*}
Thus, we have $d_1\circ d_2+d_2\circ d_1=0.$ By \eqref{twilled-1}, we have
\begin{eqnarray*}
(d_2\circ d_2)(f)=d_2(d_2(f))=[\hat{\mu}_1,[\hat{\mu}_1,f]_{\Li}]_{\Li}=\frac{1}{2}[[\hat{\mu}_1,\hat{\mu}_1]_{\Li},f]_{\Li}=0.
\end{eqnarray*}
For $i\ge3$, $d_i=0$. Thus, we obtain
$
\sum_{i+j=n\atop i,j\ge0}d_i\circ d_j=0,\,\,n\in\mathbb Z_{\ge0}.
$
Therefore, we have higher derived brackets on $C^*_{\Li}(\huaG,\huaG)$ given by
\begin{eqnarray*}
l_1(f_1)&=&[\hat{\mu}_2,f_1]_{\Li},\\
l_2(f_1,f_2)&=&[[\hat{\psi},f_1]_{\Li},f_2]_{\Li},\\
l_3(f_1,f_2,f_3)&=&[[[\hat{\mu}_1,f_1]_{\Li},f_2]_{\Li},f_3]_{\Li},\\
l_i&=&0,\,\,\,\,i\ge4,
\end{eqnarray*}
where $f_1,~f_2,~f_3\in C^*_{\Li}(\huaG,\huaG)$.

 Moreover, it is obvious that $\oplus_{m\geq0}C^m_{\Li}(\g_2,\g_1)$ is an abelian subalgebra of the graded Lie algebra $(C^*_{\Li}(\huaG,\huaG),[\cdot,\cdot]_{\Li})$. Next we show that $l_1,~l_2,~l_3$ are closed on $\oplus_{m\geq0}C^m_{\Li}(\g_2,\g_1)$. For all $g_1\in C^m_{\Li}(\g_2,\g_1)$, we have $||\hat{g}_1||=-1|2m+1$. By Lemma \ref{important-lemma-2}, we deduce that $||l_1(\hat{g}_1)||=||[\hat{\mu}_2,\hat{g}_1]_{\Li}||=-1|2(m+1)+1$. Thus, we have $l_1(\hat{g}_1)\in \oplus_{m\geq0}C^m_{\Li}(\g_2,\g_1)$. Similarly, we have
\begin{eqnarray*}
||l_2(\hat{g}_1,\hat{g}_2)||=-1|2(m+n+1)+1,\quad
||l_3(\hat{g}_1,\hat{g}_2,\hat{g}_3)||=-1|2(m+n+r+1)+1.
\end{eqnarray*}
Thus, $l_1,~l_2,~l_3$ are closed on $\oplus_{m\geq0}C^m_{\Li}(\g_2,\g_1)$. By Theorem \ref{thm:db}, $(\oplus_{m\geq0}C^m_{\Li}(\g_2,\g_1),l_1,l_2,l_3)$ is an $L_\infty$-algebra. The proof is finished.
\end{proof}

\section{Generalized matched pairs of $3$-Lie algebras}\label{sec:GM}

In this section, first we recall representations and matched pairs of $3$-Lie algebras. Then we introduce the notion of a generalized matched pair of $3$-Lie algebras using the generalized representation introduced in \cite{Jiefeng}. We show that a generalized matched pair gives rise to a twilled $3$-Lie algebra. Finally we introduce the notion of a strict twilled $3$-Lie algebra and show that there is a one-to-one correspondence between strict twilled $3$-Lie algebras and matched pairs of $3$-Lie algebras.

\begin{defi}{\rm (\cite{Dzhu, KA})}
A {\bf representation}  of a $3$-Lie algebra $(\g,[\cdot,\cdot,\cdot]_{\g})$ on a vector space $V$ is a linear
map: $\rho:\wedge^{2}\g\rightarrow \gl(V)$, such that for all $x_{1}, x_{2}, x_{3}, x_{4}\in \g,$ there holds:
\begin{eqnarray}
~\label{representation-1}\rho(x_{1},x_{2})\rho(x_{3},x_{4})&=&\rho([x_{1},x_{2},x_{3}]_{\g},x_{4})+
\rho(x_{3},[x_{1},x_{2},x_{4}]_{\g})+\rho(x_{3},x_{4})\rho(x_{1},x_{2});\\
~\label{representation-2}\rho(x_{1},[x_{2},x_{3},x_{4}]_{\g})&=&\rho(x_{3},x_{4})\rho(x_{1},x_{2})-\rho(x_{2},x_{4})\rho(x_{1},x_{3})
+\rho(x_{2},x_{3})\rho(x_{1},x_{4}).
\end{eqnarray}
\end{defi}

\begin{ex}
Let $(\g,[\cdot,\cdot,\cdot]_{\g})$ be a $3$-Lie algebra.  Define $\ad:\wedge^{2}\g\rightarrow \gl(\g)$ by
\begin{eqnarray}\label{eq2}
\ad_{x,y}z:=[x,y,z]_{\g},\quad \forall x,y,z\in \g.
\end{eqnarray}
Then   $(\g;\ad)$ is a representation of $(\g,[\cdot,\cdot,\cdot]_{\g})$, which is called the {\bf adjoint representation} of $\g$.
\end{ex}

\begin{defi}{\rm (\cite{BGS-3-Bialgebras})}
Let $(\g_1,[\cdot,\cdot,\cdot]_{\g_1})$ and $(\g_2,[\cdot,\cdot,\cdot]_{\g_2})$ be two $3$-Lie algebras. Suppose that there are linear maps $\rho:\wedge^2\g_1\rightarrow\gl(\g_2)$ and $\mu:\wedge^2\g_2\rightarrow\gl(\g_1)$ such that $(\g_2;\rho)$ is a representation of $\g_1$ and
 $(\g_1;\mu)$ is a representation of $\g_2$. For all $x,y,z\in \g_1$ and $u,v,w\in \g_2,$ if $\rho$ and $\mu$ satisfy the following conditions:
\begin{eqnarray}
~\mu(u,v)[x,y,z]_{\g_1}&=&[\mu(u,v)x,y,z]_{\g_1}+[x,\mu(u,v)y,z]_{\g_1}+[x,y,\mu(u,v)z]_{\g_1};\label{eq:mp1}\\
~[x,y,\mu(u,v)z]_{\g_1}&=&\mu(\rho(x,y)u,v)z-\mu (\rho(x,z)v,u)y+\mu (\rho(y,z)v,u)x;\label{eq:mp2}\\
~[\mu(u,v)x,y,z]_{\g_1}&=&\mu(u,v)[x,y,z]_{\g_1}+\mu(\rho(y,z)u,v)x+\mu(u,\rho(y,z)v)x;\label{eq:mp3}\\
~\rho(x,y)[u,v,w]_{\g_2}&=&[\rho(x,y)u,v,w]_{\g_2}+[u,\rho(x,y)v,w]_{\g_2}+[u,v,\rho(x,y)w]_{\g_2};\label{eq:mp4}\\
~[u,v,\rho(x,y)w]_{\g_2}&=&\rho(\mu(u,v)x,y)w-\rho (\mu(u,w)y,x)v+\rho (\mu(v,w)y,x)u;\label{eq:mp5}\\
~[\rho(x,y)u,v,w]_{\g_2}&=&\rho(x,y)[u,v,w]_{\g_2}+\rho(\mu(v,w)x,y)u+\rho(x,\rho(v,w)y)u,\label{eq:mp6}
\end{eqnarray}
 then we call $(\g_1,\g_2;\rho,\mu)$ a {\bf matched pair of $3$-Lie algebras}.
\end{defi}

\begin{defi}{\rm (\cite{Jiefeng})}
A {\bf generalized representation}  of a $3$-Lie algebra $(\g,\pi=[\cdot,\cdot,\cdot]_{\g})$ on a vector space $V$ consists of linear
maps: $\rho:\wedge^{2}\g\rightarrow \gl(V)$ and $\nu:\g\rightarrow \Hom(\wedge^{2}V,V)$, such that
\begin{eqnarray}
~\label{generalized-representation-1}
[\hat{\pi}+\hat{\rho}+\hat{\nu},\hat{\pi}+\hat{\rho}+\hat{\nu}]_{\Li}=0.
\end{eqnarray}
\emptycomment{
where $\bar{\rho}:\wedge^{3}(\g\oplus V)\rightarrow \g\oplus V$ is defined by
\begin{eqnarray*}
~\label{generalized-representation-2}
~~~\bar{\rho}(x+u,u+v,z+w)=\rho(x,y)(w)+\rho(y,z)(u)+\rho(z,x)(v),\quad \forall x,y,z\in \g,u,v,w\in V,
\end{eqnarray*}
and $\bar{\nu}:\wedge^{3}(\g\oplus V)\rightarrow \g\oplus V$ is induced by $\nu$ via
\begin{eqnarray*}
~\label{generalized-representation-3}
~~~\bar{\nu}(x+u,u+v,z+w)=\nu(x)(v,w)+\nu(y)(w,u)+\nu(z)(u,v),\quad \forall x,y,z\in \g,u,v,w\in V.
\end{eqnarray*}
}
\end{defi}

We will refer to a generalized representation by a triple $(V;\rho,\nu).$

\emptycomment{
\begin{pro}{\rm (\cite{Jiefeng})}
Linear maps $\rho:\wedge^2\g\rightarrow\End(V)$ and $\nu:\g\rightarrow\Hom(\wedge^2 V,V)$ give rise to a generalized representation of a $3$-Lie algebra
$\g$ on a vector space $V$ if and only if for all $x_{i}\in \g, v_i\in V,$ the following equalities hold:
\begin{eqnarray}
\label{eq:r1}\rho(x_1,x_2)\rho(x_3,x_4)&=&\rho([x_1,x_2,x_3],x_4)+\rho(x_3,[x_1,x_2,x_4])+\rho(x_3,x_4)\rho(x_1,x_2),\\
\label{eq:r2}\rho(x_1,[x_2,x_3,x_4])&=&\rho(x_3,x_4)\rho(x_1,x_2)-\rho(x_2,x_4)\rho(x_1,x_3)+\rho(x_2,x_3)\rho(x_1,x_4),\\
\nonumber\rho(x_1,x_2)(\nu(x_3)(v_1,v_2))&=&\nu([x_1,x_2,x_3])(v_1,v_2)+\nu(x_3)(\rho(x_1,x_2)(v_1),v_2)\\
\label{eq:r3}&&+\nu(x_3)(v_1,\rho(x_1,x_2)(v_2)),\\
\nonumber\nu(x_1)(v_1,\rho(x_2,x_3)(v_2))&=&\nu(x_3)(v_2,\rho(x_2,x_1)(v_1))+\nu(x_2)(\rho(x_3,x_1)(v_1),v_2)\\
\label{eq:r4}&&+\rho(x_2,x_3)(\nu(x_1)(v_1,v_2)),\\
\label{eq:r6}\nu(x_1)(v_1,\nu(x_2)(v_2,v_3))&=&\nu(x_2)(\nu(x_1)(v_1,v_2),v_3)+\nu(x_2)(v_2,\nu(x_1)(v_1,v_3)),\\
\label{eq:r7}\nu(x_1)(\nu(x_2)(v_1,v_2),v_3)&=&\nu(x_2)(\nu(x_1)(v_1,v_2),v_3).
\end{eqnarray}
\end{pro}
}

\begin{pro}\label{Generalized matched pair}
Let $(\g_1,[\cdot,\cdot,\cdot]_{\g_1})$ and $(\g_2,[\cdot,\cdot,\cdot]_{\g_2})$ be two $3$-Lie algebras. Suppose that there are linear maps $\rho:\wedge^2\g_1\rightarrow\gl(\g_2), \nu:\g_1\rightarrow \Hom(\wedge^2\g_2,\g_2)$ and $\varrho:\wedge^2\g_2\rightarrow\gl(\g_1), \tau:\g_2\rightarrow\Hom(\wedge^2\g_1,\g_1)$ satisfying the following conditions:
\begin{itemize}
\item[{\rm (a)}]  $(\g_2;\rho,\nu)$ is a generalized representation of $\g_1;$
 
\item[{\rm (b)}]  $(\g_1;\varrho,\tau)$ is a generalized representation of $\g_2;$ 

\item[{\rm (c)}]  For all $x_i\in \g_1$ and $a_i\in \g_2, 1\leq i \leq 4,$~$\rho$, $\nu$, $\varrho$ and $\tau$ satisfy the following conditions:
 {\footnotesize
\begin{eqnarray}
\label{eq:mp-1}~\nu([x_1,x_2,x_3]_{\g_1})(a_1\wedge a_2)&=&\rho(x_1,x_2)(\nu(x_3)(a_1\wedge a_2))+\rho(x_3,x_1)(\nu(x_2)(a_1\wedge a_2))\\
\nonumber&&+\rho(x_2,x_3)(\nu(x_1)(a_1\wedge a_2));\\
\label{eq:mp-2}~\varrho(a_1,a_2)[x_1,x_2,x_3]_{\g_1}&=&[\varrho(a_1,a_2)x_1,x_2,x_3]_{\g_1}+[x_1,\varrho(a_1,a_2)x_2,x_3]_{\g_1}+[x_1,x_2,\varrho(a_1,a_2)x_3]_{\g_1}\\
\nonumber&&~+\tau(\nu(x_1)(a_1\wedge a_2))(x_2\wedge x_3)+\tau(\nu(x_2)(a_1\wedge a_2))(x_3\wedge x_1)\\
\nonumber&&+\tau(\nu(x_3)(a_1\wedge a_2))(x_1\wedge x_2);\\
\label{eq:mp-3}~[x_1,x_2,\varrho(a_1,a_2)x_3]_{\g_1}&=&\varrho(\rho(x_1,x_2)a_1,a_2)x_3-\varrho (\rho(x_1,x_3)a_2,a_1)x_2+\varrho (\rho(x_2,x_3)a_2,a_1)x_1\\
\nonumber&&-\tau(\nu(x_3)(a_1\wedge a_2))(x_1\wedge x_2);\\
\label{eq:mp-4}~[\varrho(a_1,a_2)x_1,x_2,x_3]_{\g_1}&=&\varrho(a_1,a_2)[x_1,x_2,x_3]_{\g_1}+\varrho(\rho(x_2,x_3)a_1,a_2)x_1+\varrho(a_1,\rho(x_2,x_3)a_2)x_1\\
\nonumber&&-\tau(\nu(x_1)(a_1\wedge a_2))(x_2\wedge x_3);\\
\label{eq:mp-5}~[x_1,x_2,\tau(a_1)(x_3\wedge x_4)]_{\g_1}&=&[x_3,x_4,\tau(a_1)(x_1\wedge x_2)]_{\g_1}-\tau(\rho(x_3,x_4)a_1)(x_1\wedge x_2)\\
\nonumber&&+\tau(a_1)([x_1,x_2,x_3]_{\g_1}\wedge x_4)+\tau(a_1)(x_3\wedge [x_1,x_2,x_4]_{\g_1})\\
\nonumber&&+\tau(\rho(x_1,x_2)a_1)(x_3\wedge x_4);\\
\label{eq:mp-6}~\tau(a_1)(x_1\wedge[x_2,x_3,x_4]_{\g_1})&=&[\tau(a_1)(x_1\wedge x_2),x_3,x_4]_{\g_1}+[\tau(a_1)(x_1\wedge x_3),x_4,x_2]_{\g_1}\\
\nonumber&&+[\tau(a_1)(x_1\wedge x_4),x_2,x_3]_{\g_1}+\tau(\rho(x_1,x_2)a_1)(x_3\wedge x_4)\\ \nonumber&&+\tau(\rho(x_1,x_3)a_1)(x_4\wedge x_2)+\tau(\rho(x_1,x_4)a_1)(x_2\wedge x_3);\\
\label{eq:mp-7}~\rho(\tau(a_2)(x_1\wedge x_2),x_3)a_1&=&\rho(\tau(a_2)(x_3\wedge x_2),x_1)a_1+\rho(\tau(a_2)(x_1\wedge x_3),x_2)a_1;\\
\label{eq:mp-8}~\rho(x_1,\tau(a_1)(x_2\wedge x_3))(a_2)&=&\rho(x_1,\tau(a_2)(x_1\wedge x_3))(a_1);\\
\label{eq:mp-9}~\tau([a_1,a_2,a_3]_{\g_2})(x_1\wedge x_2)&=&\varrho(a_1,a_2)(\tau(a_3)(x_1\wedge x_2))+\varrho(a_3,a_1)(\tau(a_2)(x_1\wedge x_2))\\
\nonumber&&+\varrho(a_2,a_3)(\tau(a_1)(x_1\wedge x_2));\\
\label{eq:mp-10}~\rho(x_1,x_2)[a_1,a_2,a_3]_{\g_2}&=&[\rho(x_1,x_2)a_1,a_2,a_3]_{\g_2}+[a_1,\rho(x_1,x_2)a_2,w]_{\g_2}\\
\nonumber&&+[a_1,a_2,\rho(x_1,x_2)a_3]_{\g_2}+\nu(\tau(a_1)(x_1\wedge x_2))(a_2\wedge a_3)\\
\nonumber&&+\nu(\tau(a_2)(x_1\wedge x_2))(a_3\wedge a_1)+\nu(\tau(a_3)(x_1\wedge x_2))(a_1\wedge a_2);\\
\label{eq:mp-11}~[a_1,a_2,\rho(x_1,x_2)a_3]_{\g_2}&=&\rho(\varrho(a_1,a_2)x_1,x_2)a_3-\rho (\varrho(a_1,a_3)x_2,x_1)a_2+\rho (\varrho(a_2,a_3)x_2,x_1)a_1\\
\nonumber&&-\nu(\tau(a_3)(x_1\wedge x_2))(a_1\wedge a_2);\\
\label{eq:mp-12}~[\rho(x_1,x_2)a_1,a_2,a_3]_{\g_2}&=&\rho(x_1,x_2)[a_1,a_2,a_3]_{\g_2}+\rho(\varrho(a_2,a_3)x_1,x_2)a_1+\rho(x_1,\varrho(a_2,a_3)x_2)a_1\\
\nonumber&&-\nu(\tau(a_1)(x_1\wedge x_2))(a_2\wedge a_3);\\
\label{eq:mp-13}~[a_1,a_2,\nu(x_1)(a_3\wedge a_4)]_{\g_2}&=&\nu(x_1)([a_1,a_2,a_3]_{\g_2}\wedge a_4)+\nu(x_1)(a_3\wedge [a_1,a_2,a_4]_{\g_2})\\
\nonumber&&+[a_3,a_4,\nu(x_1)(a_1\wedge a_2)]_{\g_2}+\nu(\varrho(a_1,a_2)x_1)(a_3\wedge a_4)\\
\nonumber&&-\nu(\varrho(a_3,a_4)x_1)(a_1\wedge a_2);\\
\label{eq:mp-14}~\nu(x_1)(a_1\wedge [a_2,a_3,a_4]_{\g_2})&=&\nu(\varrho(a_1,a_2)x_1)(a_3\wedge a_4)+\nu(\varrho(a_1,a_3)x_1)(a_1\wedge a_2)\\
\nonumber&&+\nu(\varrho(a_1,a_4)x_1)(a_2\wedge a_3)+[\nu(x_1)(a_1\wedge a_2),a_3,a_4]_{\g_2}\\
\nonumber&&+[\nu(a_1)(a_1\wedge a_3),a_4,a_2]_{\g_2}+[a_2,a_3,\nu(x_1)(a_1\wedge a_4)]_{\g_2};\\
\label{eq:mp-15}~\varrho(\nu(x_2)(a_1\wedge a_2),a_3)x_1&=&\varrho(\nu(x_2)(a_3\wedge a_2),a_1)x_1+\varrho(\nu(x_2)(a_1\wedge a_3),a_2)x_1;\label{eq:mp14}\\
\label{eq:mp-16}~\varrho(a_1,\nu(x_1)(a_2\wedge a_3))(x_2)&=&\varrho(a_1,\nu(x_2)(a_1\wedge a_3))(x_1).
\end{eqnarray}}
\end{itemize}
Then there is a $3$-Lie algebra structure on $\g_1\oplus\g_2$ (as the direct sum of vector spaces) defined by
\begin{eqnarray}\label{eq:formula}
~&&[x_1+a_1,x_2+a_2,x_2+a_3]_{\g_1\oplus\g_2}\\
\nonumber&=&[x_1,x_2,x_3]_{\g_1}+\rho(x_1,x_2)(a_3)+\rho(x_2,x_3)(a_1)+\rho(x_3,x_1)(a_2)\\
\nonumber&&+\nu(x_1)(a_2\wedge a_3)+\nu(x_2)(a_3\wedge a_1)+\nu(x_3)(a_1\wedge a_2)\\
\nonumber&&+[a_1,a_2,a_3]_{\g_2}+\varrho(a_1,a_2)(x_3)+\varrho(a_2,a_3)(x_1)+\varrho(a_3,a_1)(x_2)\\
\nonumber&&+\tau(a_1)(x_2\wedge x_3)+\tau(a_2)(x_3\wedge x_1)+\tau(a_3)(x_1\wedge x_2),
\end{eqnarray}
for all $x_i\in \g_1,a_i\in \g_2, 1\leq i\leq 3.$
\end{pro}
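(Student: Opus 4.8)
The plan is to recognize the bracket in \eqref{eq:formula} as a sum of two lifted ternary operations and then run the bidegree calculus of Section \ref{sec:L}. Write $\huaG:=\g_1\oplus\g_2$, let $\pi_i$ be the bracket of $\g_i$, and, abusing notation, also denote by $\rho,\nu,\varrho,\tau$ the associated ternary maps $\wedge^2\g_1\otimes\g_2\to\g_2$, $\g_1\otimes\wedge^2\g_2\to\g_2$, $\wedge^2\g_2\otimes\g_1\to\g_1$, $\g_2\otimes\wedge^2\g_1\to\g_1$. Comparing \eqref{eq:formula} with the lift conventions \eqref{semidirect-1}--\eqref{semidirect-2}, one reads off that the bracket $\Omega:=[\cdot,\cdot,\cdot]_{\huaG}$ equals
\[
\Omega=\hat{\Omega}_1+\hat{\Omega}_2,\qquad \hat{\Omega}_1:=\hat{\pi}_1+\hat{\rho}+\hat{\nu},\qquad \hat{\Omega}_2:=\hat{\pi}_2+\hat{\varrho}+\hat{\tau};
\]
in particular $\Omega$ is skew-symmetric, and in the notation of Lemma \ref{lem:dec} one has $\hat{\phi}_1=\hat{\phi}_2=0$, with $\hat{\mu}_1=\hat{\pi}_1+\hat{\rho}$ of bidegree $2|0$, $\hat{\psi}=\hat{\nu}+\hat{\tau}$ of bidegree $1|1$, and $\hat{\mu}_2=\hat{\pi}_2+\hat{\varrho}$ of bidegree $0|2$.

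By Proposition \ref{pro:3LieMC} it suffices to prove $[\Omega,\Omega]_{\Li}=0$. Since $[\cdot,\cdot]_{\Li}$ is symmetric on $C^1_{\Li}(\huaG,\huaG)$,
\[
[\Omega,\Omega]_{\Li}=[\hat{\Omega}_1,\hat{\Omega}_1]_{\Li}+2[\hat{\Omega}_1,\hat{\Omega}_2]_{\Li}+[\hat{\Omega}_2,\hat{\Omega}_2]_{\Li}.
\]
Next I would observe that, by \eqref{generalized-representation-1} and Proposition \ref{pro:3LieMC}, hypothesis {\rm(a)} is exactly the statement $[\hat{\Omega}_1,\hat{\Omega}_1]_{\Li}=0$ in $C^*_{\Li}(\huaG,\huaG)$ (the ambient graded Lie algebra being the same regardless of which summand of $\huaG$ is regarded as the module), and likewise {\rm(b)} is exactly $[\hat{\Omega}_2,\hat{\Omega}_2]_{\Li}=0$. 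So the whole problem reduces to proving $[\hat{\Omega}_1,\hat{\Omega}_2]_{\Li}=0$.

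To this end I would decompose $[\hat{\Omega}_1,\hat{\Omega}_2]_{\Li}$ by bidegree. Since $\hat{\Omega}_1$ has bidegree components $2|0$ and $1|1$ and $\hat{\Omega}_2$ has components $0|2$ and $1|1$, Lemma \ref{important-lemma-2} shows $[\hat{\Omega}_1,\hat{\Omega}_2]_{\Li}$ has exactly three possibly nonzero bidegree components, in bidegrees $3|1$, $2|2$ and $1|3$ respectively, namely $[\hat{\pi}_1+\hat{\rho},\hat{\tau}]_{\Li}$, $[\hat{\pi}_1+\hat{\rho},\hat{\pi}_2+\hat{\varrho}]_{\Li}+[\hat{\nu},\hat{\tau}]_{\Li}$, and $[\hat{\nu},\hat{\pi}_2+\hat{\varrho}]_{\Li}$; by Lemma \ref{Zero-condition-1} it vanishes iff each of these three vanishes. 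Finally I would expand each of them via the explicit formula for $P\circ Q$ recalled after \eqref{3-Lie-bracket}, project onto the summands $\Hom(\g^{l,k},\g_1)$ and $\Hom(\g^{l,k},\g_2)$ of \eqref{decomposition}, and verify that the resulting scalar identities follow from {\rm(c)}: the $3|1$-component is controlled by the relations of {\rm(c)} coupling $\tau$ with $\pi_1,\rho$ (namely \eqref{eq:mp-5}--\eqref{eq:mp-8}), the $1|3$-component by those coupling $\nu$ with $\pi_2,\varrho$ (namely \eqref{eq:mp-13}--\eqref{eq:mp-16}), and the $2|2$-component by the remaining relations \eqref{eq:mp-1}--\eqref{eq:mp-4} and \eqref{eq:mp-9}--\eqref{eq:mp-12}. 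This yields $[\hat{\Omega}_1,\hat{\Omega}_2]_{\Li}=0$ and completes the proof; moreover, since $\hat{\phi}_1=\hat{\phi}_2=0$, the triple $(\huaG,\g_1,\g_2)$ is then in fact a twilled $3$-Lie algebra.

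The one real obstacle is this last matching step: translating the vanishing of each bidegree-homogeneous map into explicit scalar equations requires careful bookkeeping of the shuffle permutations and signs in the $\circ$-product and of the skew-symmetrizations built into the lifts, and then identifying each resulting equation inside the list \eqref{eq:mp-1}--\eqref{eq:mp-16}. A more pedestrian, essentially equivalent alternative that avoids lifts entirely is to check the Fundamental Identity for \eqref{eq:formula} directly on general elements $x_i+a_i\in\huaG$, $1\le i\le 5$: expanding both sides, separating into $\g_1$- and $\g_2$-valued parts and then into blocks according to how many of the five arguments lie in $\g_1$, one finds that each block is exactly one of, or a sum of, the relations in {\rm(a)}, {\rm(b)} and {\rm(c)}. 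This route has far more terms but no new idea.
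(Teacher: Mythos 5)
Your primary argument is sound and takes a genuinely different route from the paper's: the paper's entire proof is the one-sentence assertion that the Fundamental Identity for $[\cdot,\cdot,\cdot]_{\g_1\oplus\g_2}$ follows by direct computation from the hypotheses, i.e.\ exactly your ``pedestrian alternative''. Your main route instead recycles the bidegree calculus of Section \ref{sec:L}: writing $\Omega=\hat{\Omega}_1+\hat{\Omega}_2$, identifying hypotheses (a) and (b) with $[\hat{\Omega}_1,\hat{\Omega}_1]_{\Li}=0$ and $[\hat{\Omega}_2,\hat{\Omega}_2]_{\Li}=0$ via \eqref{generalized-representation-1}, and reducing the whole statement to the vanishing of the three homogeneous components of $[\hat{\Omega}_1,\hat{\Omega}_2]_{\Li}$. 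This buys two things the paper's proof does not: it organizes the sixteen identities of (c) by bidegree instead of treating them as an amorphous list, and it delivers for free the stronger conclusion that $(\g_1\oplus\g_2,\g_1,\g_2)$ is a twilled $3$-Lie algebra (since $\hat{\phi}_1=\hat{\phi}_2=0$), which the paper establishes separately in the theorem that follows. Both proofs leave the same final term-by-term matching to the reader, so you are not being less rigorous than the source.

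One bookkeeping correction to your final assignment. Equations \eqref{eq:mp-1} and \eqref{eq:mp-9} do not belong to the $2|2$ component: \eqref{eq:mp-1} is an identity of type $\g^{3,2}\to\g_2$ built only from $\pi_1,\rho,\nu$, so it sits in the bidegree $3|1$ slot of $[\hat{\Omega}_1,\hat{\Omega}_1]_{\Li}$ rather than of $[\hat{\Omega}_1,\hat{\Omega}_2]_{\Li}$ --- it is precisely the evaluation of $[\hat{\pi}_1+\hat{\rho},\hat{\nu}]_{\Li}$ on $(a_1\wedge a_2)\otimes(x_1\wedge x_2)\wedge x_3$ and is therefore already a consequence of hypothesis (a); dually, \eqref{eq:mp-9} follows from (b). The $2|2$ component of $[\hat{\Omega}_1,\hat{\Omega}_2]_{\Li}$ is controlled by \eqref{eq:mp-2}--\eqref{eq:mp-4} and \eqref{eq:mp-10}--\eqref{eq:mp-12} alone. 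The redundancy is harmless for the logic (unused hypotheses never invalidate a sufficiency claim), but the misassignment would confuse you if you actually carried out the component-by-component expansion you defer.
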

\begin{proof}
It follows from straightforward applications of the Fundamental Identity for the bracket operation $[\cdot,\cdot,\cdot]_{\g_1\oplus\g_2}.$
\end{proof}

\begin{defi}
 Let $(\g_1,[\cdot,\cdot,\cdot]_{\g_1})$ and $(\g_2,[\cdot,\cdot,\cdot]_{\g_2})$ be $3$-Lie algebras. Suppose that there are linear maps $\rho:\wedge^2\g_1\rightarrow\gl(\g_2), \nu:\g_1\rightarrow \Hom(\wedge^2\g_2,\g_2)$ and $\varrho:\wedge^2\g_2\rightarrow\gl(\g_1), \tau:\g_2\rightarrow\Hom(\wedge^2\g_1,\g_1)$ such that $(\g_2;\rho,\nu)$ is a generalized representation of $\g_1$, $(\g_1;\varrho,\tau)$ is a generalized representation of $\g_2$
 and $\rho, \nu, \varrho, \tau$ satisfy Eqs. \eqref{eq:mp-1}-\eqref{eq:mp-16}. Then we call $(\g_1,\g_2;\rho,\nu,\varrho,\tau)$ a {\bf generalized matched pair of $3$-Lie algebras}.
\end{defi}

\begin{thm}
  Let  $(\g_1,\g_2;\rho,\nu,\varrho,\tau)$ be a generalized matched pair of $3$-Lie algebras. Then $((\g_1\oplus \g_2, [\cdot,\cdot,\cdot]_{\g_1\oplus\g_2}),\g_1,\g_2)$ is a twilled $3$-Lie algebra.

\end{thm}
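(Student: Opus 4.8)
The plan is to verify that the $3$-Lie algebra structure $[\cdot,\cdot,\cdot]_{\g_1\oplus\g_2}$ built from the generalized matched pair in Proposition~\ref{Generalized matched pair} makes $\g_1$ and $\g_2$ into subalgebras, which by definition is exactly what a twilled $3$-Lie algebra is. Looking at formula~\eqref{eq:formula}, restricting the bracket to three elements $x_1,x_2,x_3\in\g_1$ (i.e. setting $a_1=a_2=a_3=0$) leaves only the term $[x_1,x_2,x_3]_{\g_1}\in\g_1$, so $\g_1$ is closed under the bracket; symmetrically, restricting to $a_1,a_2,a_3\in\g_2$ leaves only $[a_1,a_2,a_3]_{\g_2}\in\g_2$, so $\g_2$ is closed. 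Combined with Proposition~\ref{Generalized matched pair}, which already guarantees that $[\cdot,\cdot,\cdot]_{\g_1\oplus\g_2}$ is a genuine $3$-Lie bracket, this shows that in the language of Lemma~\ref{lem:dec} the components $\phi_1$ and $\phi_2$ (which record the $\g_2$-part of brackets of $\g_1$-elements, and the $\g_1$-part of brackets of $\g_2$-elements, respectively) both vanish.

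More precisely, I would first invoke Proposition~\ref{Generalized matched pair} to get that $(\g_1\oplus\g_2,[\cdot,\cdot,\cdot]_{\g_1\oplus\g_2})$ is a $3$-Lie algebra, so that $\Omega:=[\cdot,\cdot,\cdot]_{\g_1\oplus\g_2}$ satisfies the Maurer-Cartan equation $[\Omega,\Omega]_{\Li}=0$ and hence decomposes as $\Omega=\hat\phi_1+\hat\mu_1+\hat\psi+\hat\mu_2+\hat\phi_2$ with the seven relations of Lemma~\ref{proto-twilled} holding. Then I would read off from~\eqref{eq:formula}, matching against~\eqref{bracket-1}--\eqref{bracket-5}, the explicit identification of the five components: $\hat\mu_1$ is assembled from $[\cdot,\cdot,\cdot]_{\g_1}$ together with $\rho$; $\hat\mu_2$ from $[\cdot,\cdot,\cdot]_{\g_2}$ together with $\varrho$; $\hat\psi$ from $\nu$ and $\tau$; and crucially $\hat\phi_1$ would be the $\g_2$-component of $[x_1,x_2,x_3]_{\g_1\oplus\g_2}$ for $x_i\in\g_1$, which is $0$ since~\eqref{eq:formula} gives $[x_1,x_2,x_3]_{\g_1\oplus\g_2}=[x_1,x_2,x_3]_{\g_1}\in\g_1$, and similarly $\hat\phi_2=0$. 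Having $\phi_1=\phi_2=0$ is precisely the definition of a twilled $3$-Lie algebra, so the triple $((\g_1\oplus\g_2,[\cdot,\cdot,\cdot]_{\g_1\oplus\g_2}),\g_1,\g_2)$ is one.

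There is essentially no serious obstacle here: the content of the theorem has already been absorbed into Proposition~\ref{Generalized matched pair}, whose proof does the real work of checking the Fundamental Identity, and the only remaining point is the trivial observation that $\g_1$ and $\g_2$ are subalgebras, visible directly from~\eqref{eq:formula}. The one thing worth stating carefully is the bookkeeping identification of $\hat\mu_1,\hat\mu_2,\hat\psi$ with the data $(\rho,\nu,\varrho,\tau)$ and the $3$-Lie brackets, so that the reader sees that the five conditions~\eqref{twilled-1}--\eqref{twilled-5} of Corollary~\ref{lem:twilled-t} are equivalent to conditions (a), (b), (c) in Proposition~\ref{Generalized matched pair} restricted appropriately; but for the bare statement of the theorem it suffices to note that subalgebra-ness is immediate and then quote the definition. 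I would therefore keep the proof to a couple of lines: cite Proposition~\ref{Generalized matched pair}, observe from~\eqref{eq:formula} that $\g_1$ and $\g_2$ are subalgebras, and conclude by the definition of a twilled $3$-Lie algebra.
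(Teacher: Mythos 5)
Your proposal is correct and follows essentially the same route as the paper: invoke Proposition~\ref{Generalized matched pair} to get the $3$-Lie algebra structure on $\g_1\oplus\g_2$, then observe that in the bidegree decomposition of $\Omega=[\cdot,\cdot,\cdot]_{\g_1\oplus\g_2}$ the components $\phi_1$ and $\phi_2$ vanish (the paper phrases this by writing $\Omega=\hat{\pi}_1+\hat{\rho}+\hat{\nu}+\hat{\pi}_2+\hat{\varrho}+\hat{\tau}$, none of whose summands has bidegree $3|-1$ or $-1|3$, while you read it off directly from \eqref{eq:formula}; the two observations are the same). Your explicit remark that $\g_1$ and $\g_2$ are subalgebras because restricting \eqref{eq:formula} kills every term involving the other summand is exactly the content the paper leaves implicit.
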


\begin{proof}
Let $(\g_1,\g_2;\rho,\nu,\varrho,\tau)$ be a generalized matched pair of $3$-Lie algebras. We denote the $3$-Lie bracket $[\cdot,\cdot,\cdot]_{\g_{1}}$ and $[\cdot,\cdot,\cdot]_{\g_{2}}$ by $\pi_{1}$ and $\pi_{2}$ respectively. By Proposition \ref{Generalized matched pair},   $(\g_1\oplus\g_2,[\cdot,\cdot,\cdot]_{\g_1\oplus\g_2})$ is a  $3$-Lie algebra. Moreover for all $x_i\in \g_1,a_i\in \g_2, 1\leq i\leq 3,$ $$[x_1+a_1,x_2+a_2,x_2+a_3]_{\g_1\oplus\g_2}=(\hat{\pi}_{1}+\hat{\rho}+\hat{\nu}+\hat{\pi}_{2}+\hat{\varrho}+\hat{\tau})(x_1+a_1,x_2+a_2,x_2+a_3).$$
Therefore, by Proposition \ref{pro:3LieMC}, we have
$$[\hat{\pi}_{1}+\hat{\rho}+\hat{\nu}+\hat{\pi}_{2}+\hat{\varrho}+\hat{\tau},\hat{\pi}_{1}+\hat{\rho}+\hat{\nu}+\hat{\pi}_{2}+\hat{\varrho}+\hat{\tau}]_{\Li}=0,$$
which implies that $((\g_1\oplus \g_2, [\cdot,\cdot,\cdot]_{\g_1\oplus\g_2}),\g_1,\g_2)$ is a twilled $3$-Lie algebra.
\end{proof}

The converse of the above theorem is not true in general. At the end of this section, we introduce the notion of a strict twilled 3-Lie algebra and show that it is equivalent to a (usual) matched pair of $3$-Lie algebras.

\begin{defi}
A twilled $3$-Lie algebra $(\huaG,\g_1,\g_2)$ is called   a {\bf strict twilled $3$-Lie algebra} if $\psi=0.$
 \end{defi}

 By Lemma \ref{proto-twilled}, we have the following corollary.

\begin{cor}\label{lem:twilled-strict-t}
The triple $(\huaG,\g_1,\g_2)$ is a strict twilled $3$-Lie algebra if and only if the following conditions hold:
\begin{eqnarray}
\label{strict-twilled-1}\frac{1}{2}[\hat{\mu}_1,\hat{\mu}_1]_{\Li}&=&0,\\
\label{strict-twilled-2}[\hat{\mu}_1,\hat{\mu}_2]_{\Li}&=&0,\\
\label{strict-twilled-3}\frac{1}{2}[\hat{\mu}_2,\hat{\mu}_2]_{\Li}&=&0.
\end{eqnarray}
\end{cor}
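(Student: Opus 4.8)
The plan is to apply Lemma~\ref{proto-twilled} directly, specializing the seven equations in \eqref{eq:OTTT} to the strict case. Recall that a strict twilled $3$-Lie algebra is, by definition, a twilled $3$-Lie algebra (so $\hat{\phi}_1=\hat{\phi}_2=0$) with the additional constraint $\psi=0$, hence $\hat{\psi}=0$. So I would start from the system \eqref{twilled-1}--\eqref{twilled-5} of Corollary~\ref{lem:twilled-t}, which already encodes $\hat{\phi}_1=\hat{\phi}_2=0$, and simply impose $\hat{\psi}=0$.

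Setting $\hat{\psi}=0$ in \eqref{twilled-1}--\eqref{twilled-5}, the brackets $[\hat{\psi},\hat{\mu}_1]_{\Li}$, $[\hat{\psi},\hat{\mu}_2]_{\Li}$ and $[\hat{\psi},\hat{\psi}]_{\Li}$ all vanish identically, so equations \eqref{twilled-2} and \eqref{twilled-3} become trivial, while \eqref{twilled-1}, \eqref{twilled-4} and \eqref{twilled-5} reduce exactly to $\frac12[\hat{\mu}_1,\hat{\mu}_1]_{\Li}=0$, $[\hat{\mu}_1,\hat{\mu}_2]_{\Li}=0$ and $\frac12[\hat{\mu}_2,\hat{\mu}_2]_{\Li}=0$, which are precisely \eqref{strict-twilled-1}--\eqref{strict-twilled-3}. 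Conversely, if $\hat{\phi}_1=\hat{\phi}_2=\hat{\psi}=0$ and \eqref{strict-twilled-1}--\eqref{strict-twilled-3} hold, then all seven equations in \eqref{eq:OTTT} are satisfied (the ones involving $\hat{\phi}_1$, $\hat{\phi}_2$ or $\hat{\psi}$ vanish because one of the arguments is zero), so by Lemma~\ref{proto-twilled} the Maurer-Cartan equation $[\Omega,\Omega]_{\Li}=0$ holds, i.e. $(\huaG,\g_1,\g_2)$ is a $3$-Lie algebra decomposition; since $\phi_1=\phi_2=0$, the subspaces $\g_1,\g_2$ are subalgebras, and since $\psi=0$ it is strict.

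I do not expect any serious obstacle here: the statement is essentially a bookkeeping corollary of Lemma~\ref{proto-twilled}, and the only thing to be careful about is matching up the five components of $\Omega$ (the map $\hat\psi$ being the $1|1$ part) with the hypothesis $\psi=0$, and noting that a strict twilled $3$-Lie algebra is by fiat a twilled one, so the conditions $\hat{\phi}_1=\hat{\phi}_2=0$ are already in force and need not be rederived. The proof I would write is just: ``By Lemma~\ref{proto-twilled} and the definitions of a twilled and a strict twilled $3$-Lie algebra, $(\huaG,\g_1,\g_2)$ is a strict twilled $3$-Lie algebra if and only if $\hat{\phi}_1=\hat{\phi}_2=\hat{\psi}=0$ and \eqref{eq:OTTT} holds; substituting $\hat{\phi}_1=\hat{\phi}_2=\hat{\psi}=0$ into \eqref{eq:OTTT}, the equations involving any of these maps hold automatically, and the remaining ones are exactly \eqref{strict-twilled-1}--\eqref{strict-twilled-3}.''
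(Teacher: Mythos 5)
Your proposal is correct and follows exactly the route the paper intends: the corollary is stated as an immediate consequence of Lemma~\ref{proto-twilled}, obtained by substituting $\hat{\phi}_1=\hat{\phi}_2=\hat{\psi}=0$ into the seven equations of \eqref{eq:OTTT}, whereupon the equations containing any of these maps become trivial and the remaining ones are precisely \eqref{strict-twilled-1}--\eqref{strict-twilled-3}. Your handling of both directions and your remark that the twilled conditions $\hat{\phi}_1=\hat{\phi}_2=0$ are already in force by definition are exactly the bookkeeping the paper leaves implicit.
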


\begin{pro}
  There is a one-to-one correspondence between matched pairs of $3$-Lie algebras and strict twilled $3$-Lie algebras.
\end{pro}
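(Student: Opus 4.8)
The plan is to realise both sides of the claimed correspondence as the datum of two $3$-Lie brackets $\pi_1$ on $\g_1$ and $\pi_2$ on $\g_2$ together with two bilinear ``action'' maps, and then to match them via the characterisation of strict twilled $3$-Lie algebras in Corollary \ref{lem:twilled-strict-t} (equivalently, via Proposition \ref{Generalized matched pair}). First I would spell out what the data of a strict twilled $3$-Lie algebra $(\huaG,\g_1,\g_2)$ amounts to. Since $\g_1$ and $\g_2$ are subalgebras, the multiplication $\Omega=[\cdot,\cdot,\cdot]_\huaG$ decomposes as $\Omega=\hat\mu_1+\hat\psi+\hat\mu_2$ by Lemma \ref{lem:dec}, and the four mixed components of the bracket are recorded by $\rho(x,y)a:=\pr_{\g_2}[x,y,a]_\huaG$, $\nu(x)(a\wedge b):=\pr_{\g_2}[x,a,b]_\huaG$, $\varrho(a,b)x:=\pr_{\g_1}[a,b,x]_\huaG$ and $\tau(a)(x\wedge y):=\pr_{\g_1}[a,x,y]_\huaG$. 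Comparing with \eqref{bracket-3} we see that $\psi=0$ is equivalent to $\nu=0$ and $\tau=0$; hence $\Omega=\hat\mu_1+\hat\mu_2$ with $\hat\mu_1=\hat\pi_1+\hat\rho$ of bidegree $2|0$ and $\hat\mu_2=\hat\pi_2+\hat\varrho$ of bidegree $0|2$, and this $\Omega$ is exactly the bracket \eqref{eq:formula} specialised to $\nu=\tau=0$. Thus a strict twilled $3$-Lie algebra is precisely a quadruple $(\pi_1,\pi_2,\rho,\varrho)$ on the vector spaces $\g_1,\g_2$ for which that specialised bracket satisfies the Fundamental Identity, and $\g_1,\g_2$ recover their subalgebra structures $\pi_1,\pi_2$.

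Next I would decode the three equations of Corollary \ref{lem:twilled-strict-t} for this $\Omega$. Because $\hat\mu_1=\hat\pi_1+\hat\rho$ is a multiplication of semidirect-product type, $[\hat\mu_1,\hat\mu_1]_{\Li}=0$ is equivalent, by the same reasoning as in Proposition \ref{pro:3LieMC} applied to the semidirect product $\g_1\ltimes\g_2$, to $\pi_1$ being a $3$-Lie bracket and $(\g_2;\rho)$ being a representation of $(\g_1,\pi_1)$, i.e.\ to \eqref{representation-1}--\eqref{representation-2}; symmetrically, $[\hat\mu_2,\hat\mu_2]_{\Li}=0$ says $\pi_2$ is a $3$-Lie bracket and $(\g_1;\varrho)$ a representation of $(\g_2,\pi_2)$. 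It remains to identify the cross term $[\hat\mu_1,\hat\mu_2]_{\Li}=0$ from \eqref{strict-twilled-2} with the six matched-pair compatibility conditions \eqref{eq:mp1}--\eqref{eq:mp6}. For this I would expand $[\hat\mu_1,\hat\mu_2]_{\Li}$, sort the resulting element of $C^2_{\Li}(\huaG,\huaG)$ by bidegree and by how many of its five arguments lie in $\g_1$ versus $\g_2$, and read off the $\g_1$- and $\g_2$-components; Lemma \ref{Zero-condition-1} then turns the single equation into the desired list. Equivalently, and more economically, one may invoke Proposition \ref{Generalized matched pair}: setting $\nu=\tau=0$ in \eqref{eq:formula}, its Fundamental Identity is equivalent to conditions (a)--(c) there, and when $\nu=\tau=0$ the generalized-representation conditions become the ordinary representation axioms, every equation among \eqref{eq:mp-1}--\eqref{eq:mp-16} containing a $\nu$ or a $\tau$ collapses to $0=0$, and \eqref{eq:mp-2}, \eqref{eq:mp-3}, \eqref{eq:mp-4}, \eqref{eq:mp-10}, \eqref{eq:mp-11}, \eqref{eq:mp-12} reduce to exactly \eqref{eq:mp1}, \eqref{eq:mp2}, \eqref{eq:mp3}, \eqref{eq:mp4}, \eqref{eq:mp5}, \eqref{eq:mp6}.

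Finally I would assemble the bijection. To a matched pair $(\g_1,\g_2;\rho,\mu)$ of $3$-Lie algebras I associate the space $\g_1\oplus\g_2$ with the bracket \eqref{eq:formula} (with $\varrho:=\mu$ and $\nu=\tau=0$); by the previous step it is $3$-Lie, $\g_1$ and $\g_2$ are subalgebras, and its bidegree-$1|1$ component vanishes because the only mixed terms are $\rho$ (inside $\hat\mu_1$) and $\mu$ (inside $\hat\mu_2$), so $(\g_1\oplus\g_2,\g_1,\g_2)$ is strict twilled. Conversely, to a strict twilled $3$-Lie algebra $(\huaG,\g_1,\g_2)$ I associate $(\g_1,\g_2;\rho,\varrho)$ with $\rho,\varrho$ extracted as above, which is a matched pair by the same equivalence. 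These two assignments are mutually inverse: starting from a matched pair, the mixed parts of \eqref{eq:formula} recover $\rho$ and $\mu$; starting from a strict twilled $3$-Lie algebra, formula \eqref{eq:formula} with the extracted $\rho,\varrho$ (and $\nu=\tau=0$) reproduces $\Omega$, since $\Omega=\hat\mu_1+\hat\mu_2$. I expect the only genuine obstacle to be the bidegree bookkeeping in the middle step that pins $[\hat\mu_1,\hat\mu_2]_{\Li}=0$ to precisely \eqref{eq:mp1}--\eqref{eq:mp6} (equivalently, verifying that the generalized matched-pair axioms degenerate exactly as claimed); everything else is formal.
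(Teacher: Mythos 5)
Your proposal is correct and follows essentially the same route as the paper: both reduce the statement to Corollary \ref{lem:twilled-strict-t}, read off the two $3$-Lie structures and the two representations from $[\hat{\mu}_1,\hat{\mu}_1]_{\Li}=0$ and $[\hat{\mu}_2,\hat{\mu}_2]_{\Li}=0$, and recover \eqref{eq:mp1}--\eqref{eq:mp6} by evaluating $[\hat{\mu}_1,\hat{\mu}_2]_{\Li}=0$ on the various distributions of arguments between $\g_1$ and $\g_2$ (your appeal to Lemma \ref{Zero-condition-1} here is superfluous since $[\hat{\mu}_1,\hat{\mu}_2]_{\Li}$ is a single homogeneous map; plain multilinearity does the splitting). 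One caveat: your ``more economical'' alternative via Proposition \ref{Generalized matched pair} only serves the direction from matched pairs to strict twilled algebras, since that proposition is stated as a sufficient condition, so the direct expansion of $[\hat{\mu}_1,\hat{\mu}_2]_{\Li}$ is the route actually needed for the converse --- which is exactly what the paper does.
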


\begin{proof}
Let $(\g_1,\g_2;\rho,\mu)$ be a matched pair of $3$-Lie algebras. It is straightforward to deduce that $(\g_1\oplus\g_2,[\cdot,\cdot,\cdot]_{\g_1\oplus\g_2})$ is a $3$-Lie algebra, where the bracket operation  $[\cdot,\cdot,\cdot]_{\g_1\oplus\g_2}$ on the direct sum vector space $\g_1\oplus\g_2$ is given by
\begin{eqnarray}\label{eq:dm}
~[x+u,y+v,z+w]_{\g_1\oplus\g_2}&=&[x,y,z]_{\g_1}+\rho(x,y)w+\rho(y,z)u+\rho(z,x)v\\
\nonumber&&+[u,v,w]_{\g_2}+\mu(u,v)z+\mu(v,w)x+\mu(w,u)y.
\end{eqnarray}
We denote this $3$-Lie algebra simply by $\g_1\bowtie\g_2$. Then $(\g_1\bowtie\g_2,\g_1,\g_2)$ is a strict twilled $3$-Lie algebra.

Conversely, if $(\huaG,\g_1,\g_2)$ is a strict twilled $3$-Lie algebra, by \eqref{strict-twilled-1}, we deduce that $\hat{\mu}_1$ is a $3$-Lie algebra structure on $\huaG=\g_1\oplus\g_2$. Moreover, by \eqref{bracket-2}, we obtain that $\hat{\mu}_1:\wedge^3\g_1\rightarrow\g_1$ is a $3$-Lie bracket, which we denote by $(\g_1,[\cdot,\cdot,\cdot]_{\g_1})$ and $\rho(x,y)w:=\hat{\mu}_1(x,y,w)$ is a representation of $(\g_1,[\cdot,\cdot,\cdot]_{\g_1})$ on the vector space $\g_2$.

Similarly, we have $\hat{\mu}_2:\wedge^3\g_2\rightarrow\g_2$ is a $3$-Lie bracket, which we denote by $(\g_2,[\cdot,\cdot,\cdot]_{\g_2})$ and $\mu(v,w)x:=\hat{\mu}_2(v,w,x)$ is a representation of $(\g_2,[\cdot,\cdot,\cdot]_{\g_2})$ on the vector space $\g_1$.

By $[\hat{\mu}_1,\hat{\mu}_2]_{\Li}=0,$ for all $x,y,z\in\g_1,u,v\in \g_2,$ we have
\begin{eqnarray*}
[\hat{\mu}_1,\hat{\mu}_2]_{\Li}(u,v,x,y,z)&=&(\hat{\mu}_1{\circ}\hat{\mu}_2+\hat{\mu}_2{\circ}\hat{\mu}_1)(u,v,x,y,z)\\
                                  &=&\hat{\mu}_1(\hat{\mu}_2(u,v,x),y,z)+\hat{\mu}_1(x,\hat{\mu}_2(u,v,y),z)-\hat{\mu}_1(u,v,\hat{\mu}_2(x,y,z))\\
                                  &&+\hat{\mu}_1(x,y,\hat{\mu}_2(u,v,z))+\hat{\mu}_2(\hat{\mu}_1(u,v,x),y,z)+\hat{\mu}_2(x,\hat{\mu}_1(u,v,y),z)\\
                                  &&-\hat{\mu}_2(u,v,\hat{\mu}_1(x,y,z))+\hat{\mu}_2(x,y,\hat{\mu}_1(u,v,z))\\
                                  &=&[\mu(u,v)x,y,z]_{\g_1}+[x,\mu(u,v)y,z]_{\g_1}+[x,y,\mu(u,v)z]_{\g_1}-\mu(u,v)[x,y,z]_{\g_1}\\
                                  &=&0.
\end{eqnarray*}
Thus, we deduce that \eqref{eq:mp1} holds. Moreover, we have
\begin{eqnarray*}
&&[\hat{\mu}_1,\hat{\mu}_2]_{\Li}(z,v,u,y,x)\\
&&=-(-[x,y,\mu(u,v)z]_{\g_1}+\mu(\rho(x,y)u,v)z-\mu (\rho(x,z)v,u)y+\mu (\rho(y,z)v,u)x)=0,\\
&&[\hat{\mu}_1,\hat{\mu}_2]_{\Li}(y,z,u,v,x)\\
&&=-[\mu(u,v)x,y,z]_{\g_1}+\mu(\rho(y,z)u,v)x+\mu(u,\rho(y,z)v)x+\mu(u,v)[x,y,z]_{\g_1}=0.
\end{eqnarray*}
Thus, we deduce that \eqref{eq:mp2} and \eqref{eq:mp3} hold. Similarly, we can deduce that \eqref{eq:mp4}-\eqref{eq:mp6} hold.
Thus, $(\g_1,\g_2;\rho,\mu)$ is a matched pair of $3$-Lie algebras.
The proof is finished.
\end{proof}

\section{Twisting of twilled 3-Lie algebras by Maurer-Cartan elements}\label{sec:T}

In this section, we introduce the notion of a twisting of a 3-Lie algebra. In particular, we show that the twisting of a twilled 3-Lie algebra by a Maurer-Cartan element of the associated $L_\infty$-algebra given in Theorem \ref{quasi-as-shLie} is still a twilled 3-Lie algebra.

Let $(\huaG,[\cdot,\cdot,\cdot]_\huaG)$ be a $3$-Lie algebra with a decomposition into two subspaces $\huaG=\g_1\oplus\g_2$ equipped with the structure $\Omega=\hat{\phi}_1+\hat{\mu}_1+\hat{\psi}+\hat{\mu}_2+\hat{\phi}_2$.
Let $\hat{H}$ be the lift of a linear map $H:\g_2\lon\g_1$. Since $[\cdot,\hat{H}]_{\Li}$ is an inner derivation of the graded Lie algebra $(C^*_{\Li}(\huaG,\huaG),[\cdot,\cdot]_{\Li})$, $e^{[\cdot,\hat{H}]_{\Li}}$ is an automorphism of the graded Lie algebra $(C^*_{\Li}(\huaG,\huaG),[\cdot,\cdot]_{\Li})$.

\begin{defi}
The transformation $\Omega^{H}:=e^{[\cdot,\hat{H}]_{\Li}}\Omega$ is called a {\bf twisting} of $\Omega$ by $H$.
\end{defi}

The precise relation between $\Omega^{H}$ and $\Omega$ is given as follows.

\begin{lem}
$\Omega^{H}=e^{-\hat{H}}\circ \Omega\circ (e^{\hat{H}}\otimes e^{\hat{H}}\otimes e^{\hat{H}})$.
\end{lem}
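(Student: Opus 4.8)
The plan is to reduce the identity to a statement about two commuting nilpotent linear operators on $C^1_{\Li}(\huaG,\huaG)$. First I would record the basic consequence of $\hat{H}\circ\hat{H}=0$: since $\hat{H}^2=0$ as an endomorphism of $\huaG$, we have $e^{\hat{H}}=\Id+\hat{H}$ and $e^{-\hat{H}}=\Id-\hat{H}$, and these are mutually inverse vector space automorphisms of $\huaG$; in particular the right-hand side of the claimed identity makes sense. Next I would unwind the grade commutator bracket for the pair $(\Omega,\hat{H})$ with $\Omega\in C^1_{\Li}$ and $\hat{H}\in C^0_{\Li}$: specializing the defining formula for $P\circ Q$ to the unary $\hat{H}$ gives, for every $X\in C^1_{\Li}(\huaG,\huaG)$ and $a,b,c\in\huaG$,
$$
(X\circ\hat{H})(a,b,c)=X(\hat{H}a,b,c)+X(a,\hat{H}b,c)+X(a,b,\hat{H}c),\qquad (\hat{H}\circ X)(a,b,c)=\hat{H}\big(X(a,b,c)\big).
$$
Writing $R(X):=X\circ\hat{H}$ and $L(X):=\hat{H}\circ X$, and using $(-1)^{1\cdot 0}=1$ in \eqref{3-Lie-bracket}, we get $[X,\hat{H}]_{\Li}=R(X)-L(X)$. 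Since $[X,\hat{H}]_{\Li}\in C^{1}_{\Li}$ whenever $X\in C^{1}_{\Li}$, the operator $[\cdot,\hat{H}]_{\Li}$ restricted to $C^{1}_{\Li}(\huaG,\huaG)$ is exactly $R-L$, so that $\Omega^{H}=e^{[\cdot,\hat{H}]_{\Li}}\Omega=e^{R-L}\Omega$, with $R-L$ viewed as a linear operator on the vector space $C^{1}_{\Li}(\huaG,\huaG)$.

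Then I would establish three elementary facts about $L$ and $R$. Because $\hat{H}^2=0$, applying $L$ twice post-composes $\hat{H}^2=0$, so $L^2=0$; and inserting $\hat{H}$ into the same argument of $\Omega$ twice produces $\hat{H}^2=0$, so $R^4=0$. Moreover $L$ and $R$ commute, since by linearity of $\hat{H}$ both $LR(X)$ and $RL(X)$ equal the map $(a,b,c)\mapsto\hat{H}\big(X(\hat{H}a,b,c)+X(a,\hat{H}b,c)+X(a,b,\hat{H}c)\big)$. The only genuine bookkeeping is the combinatorial identity $\tfrac{1}{j!}R^{j}\Omega=\sum_{|S|=j}\Omega_{S}$, where for $S\subseteq\{1,2,3\}$ we let $\Omega_{S}$ denote the map obtained from $\Omega$ by applying $\hat{H}$ to the arguments indexed by $S$: the $j!$ orderings of inserting $\hat{H}$ into a fixed $j$-element set of slots all yield the same term, while any ordering repeating a slot vanishes. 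Summing over $j$ therefore gives $e^{R}\Omega=\sum_{S\subseteq\{1,2,3\}}\Omega_{S}$, which is precisely the map $(a,b,c)\mapsto\Omega\big((\Id+\hat{H})a,(\Id+\hat{H})b,(\Id+\hat{H})c\big)=\big(\Omega\circ(e^{\hat{H}}\otimes e^{\hat{H}}\otimes e^{\hat{H}})\big)(a,b,c)$. Likewise $e^{-L}X=X-\hat{H}\circ X=e^{-\hat{H}}\circ X$.

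Finally I would assemble the pieces: since $L$ and $R$ commute (and all exponentials are finite sums by nilpotency), $e^{R-L}=e^{-L}e^{R}$, hence
$$
\Omega^{H}=e^{R-L}\Omega=e^{-L}\big(e^{R}\Omega\big)=e^{-\hat{H}}\circ\Omega\circ(e^{\hat{H}}\otimes e^{\hat{H}}\otimes e^{\hat{H}}),
$$
which is the assertion. I expect the main obstacle to be purely clerical, namely extracting the two collapsed formulas for $X\circ\hat{H}$ and $\hat{H}\circ X$ from the sign- and shuffle-laden definition of the grade commutator bracket, and checking the factorial identity $\tfrac{1}{j!}R^{j}\Omega=\sum_{|S|=j}\Omega_{S}$; once these are in hand the argument is a two-line manipulation of commuting nilpotent operators.
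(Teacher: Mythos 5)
Your proof is correct, and it reaches the same identity as the paper but organizes the computation differently. The paper's proof is a direct expansion: it evaluates $[\Omega,\hat{H}]_{\Li}$, $[[\Omega,\hat{H}]_{\Li},\hat{H}]_{\Li}$, and the third and fourth iterates explicitly on elements of $\huaG^{\otimes 3}$, observes that the fifth iterate vanishes because $\hat{H}\circ\hat{H}=0$, and then collects all the terms of $\sum_k\frac{1}{k!}(\ad_{\hat{H}})^k\Omega$ by hand into the product form $(\Id-\hat{H})\circ\Omega\circ\big((\Id+\hat{H})\otimes(\Id+\hat{H})\otimes(\Id+\hat{H})\big)$. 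You instead factor the adjoint operator as $\ad_{\hat{H}}=R-L$ with $R$ the total insertion operator and $L$ post-composition, verify that $L$ and $R$ are commuting nilpotents ($L^2=0$, $R^4=0$), and use $e^{R-L}=e^{-L}e^{R}$ together with the combinatorial identity $\frac{1}{j!}R^{j}\Omega=\sum_{|S|=j}\Omega_S$ to get the two exponential factors separately. Your key computations --- the collapsed formulas $(\Omega\circ\hat{H})(a,b,c)=\Omega(\hat{H}a,b,c)+\Omega(a,\hat{H}b,c)+\Omega(a,b,\hat{H}c)$ and $\hat{H}\circ\Omega=\hat{H}\circ\Omega$, with the sign $(-1)^{1\cdot 0}=1$ in the bracket --- agree with what the paper uses implicitly, and the fact that $[\cdot,\hat{H}]_{\Li}$ preserves $C^1_{\Li}(\huaG,\huaG)$ justifies restricting to that space. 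The payoff of your route is that the term-collection step becomes automatic (it is absorbed into the factorization of commuting exponentials), at the cost of having to check commutativity of $L$ and $R$ and the factorial identity; the paper's route avoids any operator formalism but requires writing out all the iterated brackets explicitly. Both are complete proofs.
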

\begin{proof}
For all $(x_1,v_1),~(x_2,v_2),~(x_3,v_3) \in\huaG$, we have
\begin{eqnarray*}
&&[\Omega,\hat{H}]_{\Li}\big((x_1,v_1),(x_2,v_2),(x_3,v_3)\big)\\
&=&(\Omega{\circ}\hat{H}-\hat{H}{\circ}\Omega)\big((x_1,v_1),(x_2,v_2),(x_3,v_3)\big)\\
&=&\Omega((H(v_1),0),(x_2,v_2),(x_3,v_3))+\Omega((x_1,v_1),(H(v_2),0),,(x_3,v_3))\\
&&\quad+\Omega((x_1,v_1),(x_2,v_2),(H(v_3),0))-\hat{H}(\Omega((x_1,v_1),(x_2,v_2),(x_3,v_3))).
\end{eqnarray*}
By $\hat{H}\circ\hat{H}=0$, we have
\begin{eqnarray*}
&&[[\Omega,\hat{H}]_{\Li},\hat{H}]_{\Li}\big((x_1,v_1),(x_2,v_2),(x_3,v_3)\big)\\
&=&[\Omega,\hat{H}]_{\Li}((H(v_1),0),(x_2,v_2),(x_3,v_3))+[\Omega,\hat{H}]_{\Li}((x_1,v_1),(H(v_2),0),(x_3,v_3))\\
&&+[\Omega,\hat{H}]_{\Li}((x_1,v_1),(x_2,v_2),(H(v_3),0))-\hat{H}\big([\Omega,\hat{H}]_{\Li}((x_1,v_1),(x_2,v_2),(x_3,v_3))\big)\\
&=&2\Omega((H(v_1),0),(H(v_2),0),(x_3,v_3))+2\Omega((H(v_1),0),(x_2,v_2),(H(v_3),0))\\
&&+2\Omega((x_1,v_1),(H(v_2),0),(H(v_3),0))-2\hat{H}\Omega((H(v_1),0),(x_2,v_2),(x_3,v_3))\\
&&-2\hat{H}\Omega((x_1,v_1),(H(v_2),0),(x_3,v_3))-2\hat{H}\Omega((x_1,v_1),(x_2,v_2),(H(v_3),0)).
\end{eqnarray*}
Similarly, we have
\begin{eqnarray*}
&&[[[\Omega,\hat{H}]_{\Li},\hat{H}]_{\Li},\hat{H}]_{\Li}\big((x_1,v_1),(x_2,v_2),(x_3,v_3)\big)\\
&=&6\Omega((H(v_1),0),(H(v_2),0),(H(v_3),0))-6\hat{H}(\Omega((H(v_1),0),(H(v_2),0),(x_3,v_3)))\\
&&\quad-6\hat{H}(\Omega((H(v_1),0),(x_2,v_2),(H(v_3),0)))-6\hat{H}(\Omega((x_1,v_1),(H(v_2),0),(H(v_3),0)))
\end{eqnarray*}
and
\begin{eqnarray*}
&&[[[[\Omega,\hat{H}]_{\Li},\hat{H}]_{\Li},\hat{H}]_{\Li},\hat{H}]_{\Li}\big((x_1,v_1),(x_2,v_2),(x_3,v_3)\big)\\
&=&-24\hat{H}(\Omega ((H(v_1),0),(H(v_2),0),(H(v_3),0))).
\end{eqnarray*}
Moreover, for all $ i\ge5$, $$(([\cdot,\hat{H}]_{\Li})^i\Omega)\big((x_1,v_1),(x_2,v_2),(x_3,v_3)\big)=0.$$
Therefore,  we have
\begin{equation}
\begin{aligned}\label{twisting-operator}
\Omega^{H}=&e^{[\cdot,\hat{H}]_{\Li}}\Omega\\
=&\Omega+[\Omega,\hat{H}]_{\Li}+\half[[\Omega,\hat{H}]_{\Li},\hat{H}]_{\Li}+\frac{1}{6}[[[\Omega,\hat{H}]_{\Li},\hat{H}]_{\Li},\hat{H}]_{\Li}\\
&+\frac{1}{24}[[[[\Omega,\hat{H}]_{\Li},\hat{H}]_{\Li},\hat{H}]_{\Li},\hat{H}]_{\Li}\\
=&\Omega+\Omega\circ(\hat{H}\otimes{\Id}\otimes{\Id})+\Omega\circ({\Id}\otimes\hat{H}\otimes{\Id})+\Omega\circ({\Id}\otimes{\Id}\otimes\hat{H})
-\hat{H}\circ\Omega\\
&+\Omega\circ(\hat{H}\otimes\hat{H}\otimes{\Id})+\Omega\circ(\hat{H}\otimes{\Id}\otimes\hat{H})+\Omega\circ({\Id}\otimes\hat{H}\otimes\hat{H})\\
&-\hat{H}\circ\Omega\circ(\hat{H}\otimes{\Id}\otimes{\Id})-\hat{H}\circ\Omega\circ({\Id}\otimes\hat{H}\otimes{\Id})-\hat{H}\circ\Omega\circ({\Id}\otimes{\Id}\otimes\hat{H})\\
&+\Omega\circ(\hat{H}\otimes\hat{H}\otimes\hat{H})-\hat{H}\circ\Omega\circ(\hat{H}\otimes\hat{H}\otimes\hat{H})\\
&-\hat{H}\circ\Omega\circ(\hat{H}\otimes\hat{H}\otimes{\Id})-\hat{H}\circ\Omega\circ(\hat{H}\otimes{\Id}\otimes\hat{H})-\hat{H}\circ\Omega\circ({\Id}\otimes\hat{H}\otimes\hat{H})\\
=&(\Id-\hat{H})\circ\Omega\circ((\Id+\hat{H})\otimes(\Id+\hat{H})\otimes(\Id+\hat{H}))\\
=&e^{-\hat{H}}\circ \Omega\circ (e^{\hat{H}}\otimes e^{\hat{H}}\otimes e^{\hat{H}}).
\end{aligned}
\end{equation}
 The proof is finished.
\end{proof}

\begin{pro}
The twisting $\Omega^{H}$ is a $3$-Lie algebra structure on $\huaG$, i.e. one has $$[\Omega^{H},\Omega^{H}]_{\Li}=0.$$
\end{pro}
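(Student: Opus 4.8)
The plan is to exploit the fact, already recorded in the excerpt, that $e^{[\cdot,\hat{H}]_{\Li}}$ is an automorphism of the graded Lie algebra $(C^*_{\Li}(\huaG,\huaG),[\cdot,\cdot]_{\Li})$. Since $\Omega$ is by hypothesis the bracket of a $3$-Lie algebra on $\huaG$, Proposition \ref{pro:3LieMC} gives $[\Omega,\Omega]_{\Li}=0$. Automorphisms of a graded Lie algebra intertwine the bracket, so
\[
[\Omega^{H},\Omega^{H}]_{\Li}=\big[e^{[\cdot,\hat{H}]_{\Li}}\Omega,\,e^{[\cdot,\hat{H}]_{\Li}}\Omega\big]_{\Li}=e^{[\cdot,\hat{H}]_{\Li}}\big([\Omega,\Omega]_{\Li}\big)=e^{[\cdot,\hat{H}]_{\Li}}(0)=0,
\]
which is exactly the claim.

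To make this airtight I would spell out the two inputs. First, that $[\cdot,\hat{H}]_{\Li}$ is a locally nilpotent derivation of $(C^*_{\Li}(\huaG,\huaG),[\cdot,\cdot]_{\Li})$, so that $e^{[\cdot,\hat{H}]_{\Li}}=\sum_{i\ge 0}\tfrac{1}{i!}([\cdot,\hat{H}]_{\Li})^i$ is a well-defined linear operator: it is a derivation because $[\cdot,\hat{H}]_{\Li}=\operatorname{ad}_{\hat{H}}$ is inner, and it is locally nilpotent by the bidegree count (Lemma \ref{important-lemma-2}), since $\|\hat{H}\|=-1|1$ forces the $\g_1$-index to drop by one at each step, and any homogeneous map whose $\g_1$-index is $<-1$ vanishes; concretely, the computation preceding \eqref{twisting-operator} already shows $([\cdot,\hat{H}]_{\Li})^i\Omega=0$ for $i\ge 5$. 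Second, that the exponential of a (locally nilpotent) derivation $D$ of a Lie algebra is an algebra automorphism, i.e. $e^{D}[x,y]=[e^{D}x,e^{D}y]$; this is the standard formal identity $\frac{1}{n!}D^n[x,y]=\sum_{p+q=n}\big[\tfrac{1}{p!}D^px,\tfrac{1}{q!}D^qy\big]$ summed over $n$, which converges termwise here by local nilpotency.

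An alternative, more pedestrian route would be to take the explicit formula $\Omega^{H}=e^{-\hat{H}}\circ\Omega\circ(e^{\hat{H}}\otimes e^{\hat{H}}\otimes e^{\hat{H}})$ from the previous lemma and verify the Fundamental Identity for $\Omega^H$ by direct substitution, using $\hat{H}\circ\hat{H}=0$ to truncate the expansions. I would avoid this: it reproves the automorphism property by hand and is computationally heavy, whereas the conceptual argument above is one line once the two lemmas are in place. The only genuine point requiring care — and hence the main (mild) obstacle — is justifying convergence/local nilpotency so that $e^{[\cdot,\hat{H}]_{\Li}}$ really is an automorphism rather than a formal symbol; the bidegree argument handles this cleanly.
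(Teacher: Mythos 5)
Your argument is correct, and it takes a slightly different route from the paper's. You invoke the fact (already stated in the paper right before the definition of twisting) that $e^{[\cdot,\hat{H}]_{\Li}}$ is an automorphism of the graded Lie algebra $(C^*_{\Li}(\huaG,\huaG),[\cdot,\cdot]_{\Li})$, so that $[\Omega^{H},\Omega^{H}]_{\Li}=e^{[\cdot,\hat{H}]_{\Li}}[\Omega,\Omega]_{\Li}=0$; your supporting remarks on local nilpotency via the bidegree count (each application of $[\cdot,\hat{H}]_{\Li}$ lowers the $\g_1$-index by one, and homogeneous maps with $\g_1$-index below $-1$ vanish) and on $e^{D}$ of a derivation being an automorphism are exactly what is needed to make that stated fact rigorous. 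The paper instead proceeds from the conjugation formula $\Omega^{H}=e^{-\hat{H}}\circ\Omega\circ(e^{\hat{H}}\otimes e^{\hat{H}}\otimes e^{\hat{H}})$ of the preceding lemma and computes
$[\Omega^{H},\Omega^{H}]_{\Li}=2\,\Omega^{H}\circ\Omega^{H}=2\,e^{-\hat{H}}\circ(\Omega\circ\Omega)\circ(e^{\hat{H}}\otimes\cdots\otimes e^{\hat{H}})=e^{-\hat{H}}\circ[\Omega,\Omega]_{\Li}\circ(e^{\hat{H}}\otimes\cdots\otimes e^{\hat{H}})=0$,
which is not the heavy Fundamental-Identity verification you describe as the ``pedestrian'' alternative, but a two-line computation resting on the compatibility of the composition product with conjugation by the linear automorphism $e^{\hat{H}}$ of $\huaG$. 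Your version buys independence from that compatibility (it needs only the Lie-algebra automorphism property of $e^{\ad_{\hat{H}}}$ and $[\Omega,\Omega]_{\Li}=0$), while the paper's version reuses the explicit formula it has just established; both are complete.
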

\begin{proof}
By $\Omega^{H}=e^{-\hat{H}}\circ \Omega\circ (e^{\hat{H}}\otimes e^{\hat{H}}\otimes e^{\hat{H}})$, we have
\begin{eqnarray*}
[\Omega^{H},\Omega^{H}]_{\Li}=2\Omega^{H}{\circ}\Omega^{H}&=&2e^{-\hat{H}}\circ(\Omega{\circ}\Omega)\circ(e^{\hat{H}}\otimes e^{\hat{H}}\otimes e^{\hat{H}}\otimes e^{\hat{H}}\otimes e^{\hat{H}})\\
&=&e^{-\hat{H}}\circ[\Omega,\Omega]_{\Li}\circ(e^{\hat{H}}\otimes e^{\hat{H}}\otimes e^{\hat{H}}\otimes e^{\hat{H}}\otimes e^{\hat{H}})=0.
\end{eqnarray*}
The proof is finished.
\end{proof}

\begin{cor}\label{twisting-isomorphism}
$
e^{\hat{H}}:(\huaG,\Omega^{H})\lon(\huaG,\Omega)
$
is an isomorphism between $3$-Lie algebras.
\end{cor}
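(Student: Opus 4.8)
The plan is to observe that $e^{\hat H}$ is a linear automorphism of $\huaG$ and then to verify the multiplicativity condition directly from the description of $\Omega^{H}$ obtained in the preceding lemma.

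First I would record that, since $\hat H\circ\hat H=0$, the operator $e^{\hat H}=\Id+\hat H$ is invertible on $\huaG$ with inverse $e^{-\hat H}=\Id-\hat H$; indeed $(\Id+\hat H)\circ(\Id-\hat H)=\Id-\hat H\circ\hat H=\Id$, and similarly in the other order. Hence $e^{\hat H}\colon\huaG\lon\huaG$ is a bijective linear map. Moreover both $(\huaG,\Omega)$ and $(\huaG,\Omega^{H})$ are $3$-Lie algebras: the former by hypothesis, and the latter by the previous proposition, which gives $[\Omega^{H},\Omega^{H}]_{\Li}=0$. So it remains only to check that $e^{\hat H}$ intertwines the two brackets, that is,
\begin{equation*}
e^{\hat H}\circ\Omega^{H}=\Omega\circ(e^{\hat H}\otimes e^{\hat H}\otimes e^{\hat H}),
\end{equation*}
which, unwound in terms of elements, is exactly the statement that $e^{\hat H}$ is a $3$-Lie algebra morphism from $(\huaG,\Omega^{H})$ to $(\huaG,\Omega)$.

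Finally I would substitute the identity $\Omega^{H}=e^{-\hat H}\circ\Omega\circ(e^{\hat H}\otimes e^{\hat H}\otimes e^{\hat H})$ from the preceding lemma into the left-hand side; the composite $e^{\hat H}\circ e^{-\hat H}=\Id$ then cancels, leaving $\Omega\circ(e^{\hat H}\otimes e^{\hat H}\otimes e^{\hat H})$, as required. There is essentially no obstacle: the only point needing care is the elementary remark that $e^{\hat H}$ and $e^{-\hat H}$ are genuinely mutually inverse, which is precisely where the nilpotency $\hat H\circ\hat H=0$ enters; the rest is formal, and combining bijectivity with the intertwining identity yields the corollary.
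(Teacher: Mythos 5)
Your proof is correct and is essentially the argument the paper intends: the corollary is an immediate consequence of the identity $\Omega^{H}=e^{-\hat H}\circ\Omega\circ(e^{\hat H}\otimes e^{\hat H}\otimes e^{\hat H})$ together with the invertibility of $e^{\hat H}=\Id+\hat H$ coming from $\hat H\circ\hat H=0$. The paper leaves this unproved precisely because it follows by the cancellation you describe, so there is nothing to add.
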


The twisting operations are completely determined by the following result.

\begin{pro}\label{thm:twist}
Write $\Omega:=\hat{\phi}_1+\hat{\mu}_1+\hat{\psi}+\hat{\mu}_2+\hat{\phi}_2$ and $\Omega^{H}:=\hat{\phi}_1^{H}+\hat{\mu}_1^{H}+\hat{\psi}^{H}+\hat{\mu}_2^{H}+\hat{\phi}_2^{H}$. Then we have:
\begin{eqnarray}
\label{twisting-1}\hat{\phi}_1^{H}&=&\hat{\phi}_1,\\
\label{twisting-2}\hat{\mu}_1^{H}&=&\hat{\mu}_1+[\hat{\phi}_1,\hat{H}]_{\Li},\\
\label{twisting-3}\hat{\psi}^{H}&=&\hat{\psi}+[\hat{\mu}_1,\hat{H}]_{\Li}+\half[[\hat{\phi}_1,\hat{H}]_{\Li},\hat{H}]_{\Li},\\
\label{twisting-4}\hat{\mu}_2^{H}&=&\hat{\mu}_2+[\hat{\psi},\hat{H}]_{\Li}+\half[[\hat{\mu}_1,\hat{H}]_{\Li},\hat{H}]_{\Li}+\frac{1}{6}[[[\hat{\phi}_1,\hat{H}]_{\Li},\hat{H}]_{\Li},\hat{H}]_{\Li},\\
\label{twisting-5}\hat{\phi}_2^{H}&=&\hat{\phi}_2+[\hat{\mu}_2,\hat{H}]_{\Li}+\half[[\hat{\psi},\hat{H}]_{\Li},\hat{H}]_{\Li}+\frac{1}{6}[[[\hat{\mu}_1,\hat{H}]_{\Li},\hat{H}]_{\Li},\hat{H}]_{\Li}\\
\nonumber&&+\frac{1}{24}[[[[\hat{\phi}_1,\hat{H}]_{\Li},\hat{H}]_{\Li},\hat{H}]_{\Li},\hat{H}]_{\Li}.
\end{eqnarray}
\end{pro}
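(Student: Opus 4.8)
The plan is to read off the five components of $\Omega^{H}$ from the finite closed-form expansion of the twisting that has already been established in \eqref{twisting-operator}, namely
$$\Omega^{H}=\Omega+[\Omega,\hat{H}]_{\Li}+\half[[\Omega,\hat{H}]_{\Li},\hat{H}]_{\Li}+\frac{1}{6}[[[\Omega,\hat{H}]_{\Li},\hat{H}]_{\Li},\hat{H}]_{\Li}+\frac{1}{24}[[[[\Omega,\hat{H}]_{\Li},\hat{H}]_{\Li},\hat{H}]_{\Li},\hat{H}]_{\Li},$$
substituting the bidegree decomposition $\Omega=\hat{\phi}_1+\hat{\mu}_1+\hat{\psi}+\hat{\mu}_2+\hat{\phi}_2$ of Lemma \ref{lem:dec}, expanding by bilinearity of $[\cdot,\cdot]_{\Li}$, and then sorting the resulting terms by bidegree.

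First I would record the relevant bidegrees: $||\hat{\phi}_1||=3|{-1}$, $||\hat{\mu}_1||=2|0$, $||\hat{\psi}||=1|1$, $||\hat{\mu}_2||=0|2$, $||\hat{\phi}_2||={-1}|3$, and $||\hat{H}||={-1}|1$. By Lemma \ref{important-lemma-2}, bracketing with $\hat{H}$ shifts the bidegree by $({-1}|1)$, so iterating this operation $i$ times sends a homogeneous map of bidegree $l|k$ to one of bidegree $(l-i)|(k+i)$. Since by Definition \ref{Bidegree} a nonzero homogeneous map has both entries of its bidegree $\ge -1$, any such iterate whose nominal first entry drops below $-1$ must vanish; this is precisely the observation, already used to derive \eqref{twisting-operator}, that $\bigl([\cdot,\hat{H}]_{\Li}\bigr)^{i}\Omega=0$ for $i\ge 5$, now tracked summand by summand. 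Concretely: $[\cdot,\hat{H}]_{\Li}$ applied to $\hat{\phi}_1$ survives through the fourth iterate (reaching bidegree ${-1}|3$) and dies at the fifth; applied to $\hat{\mu}_1$ it survives through the third iterate; applied to $\hat{\psi}$ through the second; applied to $\hat{\mu}_2$ through the first; and $[\hat{\phi}_2,\hat{H}]_{\Li}=0$ already.

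Next I would collect, in the expansion of $\Omega^{H}$, all terms of each fixed bidegree among $3|{-1},\ 2|0,\ 1|1,\ 0|2,\ {-1}|3$, keeping the combinatorial factors $1,\,1,\,\half,\,\frac16,\,\frac1{24}$ attached to the $0$th through $4$th iterated brackets. The bidegree-$(3|{-1})$ part consists of $\hat{\phi}_1$ alone; the bidegree-$(2|0)$ part of $\hat{\mu}_1$ and $[\hat{\phi}_1,\hat{H}]_{\Li}$; the bidegree-$(1|1)$ part of $\hat{\psi}$, $[\hat{\mu}_1,\hat{H}]_{\Li}$ and $\half[[\hat{\phi}_1,\hat{H}]_{\Li},\hat{H}]_{\Li}$; the bidegree-$(0|2)$ part of $\hat{\mu}_2$, $[\hat{\psi},\hat{H}]_{\Li}$, $\half[[\hat{\mu}_1,\hat{H}]_{\Li},\hat{H}]_{\Li}$ and $\frac16[[[\hat{\phi}_1,\hat{H}]_{\Li},\hat{H}]_{\Li},\hat{H}]_{\Li}$; and the bidegree-$({-1}|3)$ part of $\hat{\phi}_2$, $[\hat{\mu}_2,\hat{H}]_{\Li}$, $\half[[\hat{\psi},\hat{H}]_{\Li},\hat{H}]_{\Li}$, $\frac16[[[\hat{\mu}_1,\hat{H}]_{\Li},\hat{H}]_{\Li},\hat{H}]_{\Li}$ and $\frac1{24}[[[[\hat{\phi}_1,\hat{H}]_{\Li},\hat{H}]_{\Li},\hat{H}]_{\Li},\hat{H}]_{\Li}$. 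Since by Lemma \ref{Zero-condition-1} the decomposition of $\Omega^{H}$ into homogeneous pieces of distinct bidegrees is unique, these five groupings must coincide with $\hat{\phi}_1^{H},\hat{\mu}_1^{H},\hat{\psi}^{H},\hat{\mu}_2^{H},\hat{\phi}_2^{H}$, which is exactly \eqref{twisting-1}--\eqref{twisting-5}.

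I do not expect a real obstacle here: the content is entirely the bidegree bookkeeping above together with the vanishing of the over-iterated brackets. The one point that deserves care is to make sure that every term produced by the finite sum \eqref{twisting-operator} is assigned to the correct one of the five bidegree classes and that none is omitted, which is why I would carry out the sorting explicitly rather than invoking a generating-function shortcut.
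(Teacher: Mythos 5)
Your proposal is correct and follows essentially the same route as the paper's own proof: both start from the closed-form expansion \eqref{twisting-operator}, use Lemma \ref{Zero-condition-2} (or equivalently the impossibility of a first bidegree entry below $-1$) to kill the over-iterated brackets, track bidegrees via Lemma \ref{important-lemma-2}, and sort the surviving terms into the five homogeneous components using the uniqueness of the decomposition from Lemma \ref{Zero-condition-1}. The bookkeeping of which iterates survive and the final groupings match the paper exactly.
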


\begin{proof}
By \eqref{twisting-operator}, the first term of $\Omega^{H}$ is $\Omega$. By Lemma \ref{Zero-condition-2} and $||\hat{\phi}_2||=-1|3,~||\hat{H}||=-1|1$, we have
$$
[\Omega,\hat{H}]_{\Li}=[\hat{\phi}_1,\hat{H}]_{\Li}+[\hat{\mu}_1,\hat{H}]_{\Li}+[\hat{\psi},\hat{H}]_{\Li}+[\hat{\mu}_2,\hat{H}]_{\Li}.
$$
By Lemma \ref{important-lemma-2} and $||\hat{\phi}_1||=3|-1,~||\hat{\mu}_1||=2|0,~||\hat{\psi}||=1|1,~||\hat{\mu}_2||=0|2$, we have $$||[\hat{\phi}_1,\hat{H}]_{\Li}||=2|0,\quad ||[\hat{\mu}_1,\hat{H}]_{\Li}||=1|1,\quad||[\hat{\psi},\hat{H}]_{\Li}||=0|2,\quad||[\hat{\mu}_2,\hat{H}]_{\Li}||=-1|3.$$
Therefore, $[[\hat{\mu}_2,\hat{H}]_{\Li},\hat{H}]_{\Li}=0$ and
$$
\half[[\Omega,\hat{H}]_{\Li},\hat{H}]_{\Li}=\half([[\hat{\phi}_1,\hat{H}]_{\Li},\hat{H}]_{\Li}+[[\hat{\mu}_1,\hat{H}]_{\Li},\hat{H}]_{\Li}+[[\hat{\psi},\hat{H}]_{\Li},\hat{H}]_{\Li}).
$$
Moreover, we have
$$||[[\hat{\phi}_1,\hat{H}]_{\Li},\hat{H}]_{\Li}||=1|1,\quad||[[\hat{\mu}_1,\hat{H}]_{\Li},\hat{H}]_{\Li}||=0|2,\quad||[[\hat{\psi},\hat{H}]_{\Li},\hat{H}]_{\Li}||=-1|3.$$
Thus, $[[[\hat{\psi},\hat{H}]_{\Li},\hat{H}]_{\Li},\hat{H}]_{\Li}=0$ and
$$
\frac{1}{6}[[[\Omega,\hat{H}]_{\Li},\hat{H}]_{\Li},\hat{H}]_{\Li}=\frac{1}{6}([[[\hat{\phi}_1,\hat{H}]_{\Li},\hat{H}]_{\Li},\hat{H}]_{\Li}+[[[\hat{\mu}_1,\hat{H}]_{\Li},\hat{H}]_{\Li},\hat{H}]_{\Li}).
$$
By
$
||[[[\hat{\phi}_1,\hat{H}]_{\Li},\hat{H}]_{\Li},\hat{H}]_{\Li}||=0|2$ and $||[[[\hat{\mu}_1,\hat{H}]_{\Li},\hat{H}]_{\Li},\hat{H}]_{\Li}||=-1|3,
$
we have $$[[[[\hat{\mu}_1,\hat{H}]_{\Li},\hat{H}]_{\Li},\hat{H}]_{\Li},\hat{H}]_{\Li}=0.$$
Therefore,
$$\frac{1}{24}[[[[\Omega,\hat{H}]_{\Li},\hat{H}]_{\Li},\hat{H}]_{\Li},\hat{H}]_{\Li}=\frac{1}{24}[[[[\hat{\phi}_1,\hat{H}]_{\Li},\hat{H}]_{\Li},\hat{H}]_{\Li},\hat{H}]_{\Li}.$$

It is straightforward to see that the terms of the bidegree $-1|3$ are given by
\begin{eqnarray*}
&&\hat{\phi}_2+[\hat{\mu}_2,\hat{H}]_{\Li}+\half[[\hat{\psi},\hat{H}]_{\Li},\hat{H}]_{\Li}+\frac{1}{6}[[[\hat{\mu}_1,\hat{H}]_{\Li},\hat{H}]_{\Li},\hat{H}]_{\Li}\\
 &&+\frac{1}{24}[[[[\hat{\phi}_1,\hat{H}]_{\Li},\hat{H}]_{\Li},\hat{H}]_{\Li},\hat{H}]_{\Li}.
\end{eqnarray*}
Thus, we deduce that \eqref{twisting-5} holds. Similarly, by considering the terms of the bidegrees $3|-1$, $2|0$, $1|1$, $0|2$,
  we deduce  that \eqref{twisting-1}-\eqref{twisting-4} hold respectively.
\end{proof}

Now we consider the twisting of twilled 3-Lie algebras.

\begin{thm}\label{Twilled-3-Lie}
Let $(\huaG,\g_1,\g_2)$ be a twilled $3$-Lie algebra and $H:\g_2\longrightarrow\g_1$ a linear map. Then   the twisting of $((\huaG,\Omega^{H}),\g_1,\g_2)$ is  also a twilled $3$-Lie algebra if and only if $H$ is a Maurer-Cartan element of the $L_\infty$-algebra $(\oplus_{m\geq0}C^m_{\Li}(\g_2,\g_1),l_1,l_2,l_3)$  given in Theorem \ref{quasi-as-shLie}.
\end{thm}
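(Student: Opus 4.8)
The plan is to compute the bidegree components of the twisted structure $\Omega^{H}$ directly from Proposition~\ref{thm:twist} and to recognize the surviving obstruction as the Maurer--Cartan expression. First I would unwind the definitions: we already know $(\huaG,\Omega^{H})$ is a $3$-Lie algebra (indeed $e^{\hat{H}}$ is an isomorphism by Corollary~\ref{twisting-isomorphism}), so by Lemma~\ref{lem:dec} it has a unique decomposition $\Omega^{H}=\hat{\phi}_1^{H}+\hat{\mu}_1^{H}+\hat{\psi}^{H}+\hat{\mu}_2^{H}+\hat{\phi}_2^{H}$ into homogeneous pieces of bidegrees $3|-1,\,2|0,\,1|1,\,0|2,\,-1|3$. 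By the definition of a twilled $3$-Lie algebra, $((\huaG,\Omega^{H}),\g_1,\g_2)$ is twilled if and only if $\hat{\phi}_1^{H}=0$ and $\hat{\phi}_2^{H}=0$, so the whole theorem amounts to pinning down when these two components vanish.

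Next I would feed in the hypothesis that $(\huaG,\g_1,\g_2)$ is already twilled, i.e. $\hat{\phi}_1=\hat{\phi}_2=0$. Substituting this into \eqref{twisting-1}--\eqref{twisting-5}, every bracket containing a factor $\hat{\phi}_1$ or $\hat{\phi}_2$ drops out, leaving $\hat{\phi}_1^{H}=0$ identically, $\hat{\mu}_1^{H}=\hat{\mu}_1$, $\hat{\psi}^{H}=\hat{\psi}+[\hat{\mu}_1,\hat{H}]_{\Li}$, $\hat{\mu}_2^{H}=\hat{\mu}_2+[\hat{\psi},\hat{H}]_{\Li}+\frac{1}{2}[[\hat{\mu}_1,\hat{H}]_{\Li},\hat{H}]_{\Li}$, and
\[
\hat{\phi}_2^{H}=[\hat{\mu}_2,\hat{H}]_{\Li}+\frac{1}{2}[[\hat{\psi},\hat{H}]_{\Li},\hat{H}]_{\Li}+\frac{1}{6}[[[\hat{\mu}_1,\hat{H}]_{\Li},\hat{H}]_{\Li},\hat{H}]_{\Li}.
\]
Since $\hat{\phi}_1^{H}=0$ is automatic, the twisted triple is twilled if and only if this last expression vanishes. (The formulas for $\hat{\mu}_1^{H}$, $\hat{\psi}^{H}$, $\hat{\mu}_2^{H}$ are not needed for the statement but record the subalgebra and representation data of the new twilled $3$-Lie algebra, so I would keep them visible.)

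Finally I would match $\hat{\phi}_2^{H}$ against the higher derived brackets of Theorem~\ref{quasi-as-shLie}: for $H\in C^0_{\Li}(\g_2,\g_1)$ one has $l_1(H)=[\hat{\mu}_2,\hat{H}]_{\Li}$, $l_2(H,H)=[[\hat{\psi},\hat{H}]_{\Li},\hat{H}]_{\Li}$ and $l_3(H,H,H)=[[[\hat{\mu}_1,\hat{H}]_{\Li},\hat{H}]_{\Li},\hat{H}]_{\Li}$, so $\hat{\phi}_2^{H}$ is precisely the lift of $l_1(H)+\frac{1}{2}l_2(H,H)+\frac{1}{6}l_3(H,H,H)$; since $l_k=0$ for $k\ge 4$ in this $L_\infty$-algebra, this is the left-hand side of the Maurer--Cartan equation \eqref{MC-equation}. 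Because the lift map $C^1_{\Li}(\g_2,\g_1)\to C^1_{\Li}(\huaG,\huaG)$ is injective (an element and its lift vanish simultaneously, cf. the bidegree decomposition in Lemma~\ref{lem:dec}), $\hat{\phi}_2^{H}=0$ holds if and only if $\sum_{k\ge1}\frac{1}{k!}l_k(H,\dots,H)=0$, i.e. if and only if $H$ is a Maurer--Cartan element. Together with $\hat{\phi}_1^{H}=0$ this gives the claimed equivalence.

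I do not expect a genuine obstacle here: the argument is bookkeeping on top of Proposition~\ref{thm:twist}. The two points that need care are that the combinatorial weights $1,\frac{1}{2},\frac{1}{6}$ coming from the exponential $e^{[\cdot,\hat{H}]_{\Li}}$ match the $\frac{1}{k!}$ in the Maurer--Cartan equation (they do), and that no other bidegree-$(-1|3)$ term survives in $\hat{\phi}_2^{H}$ after setting $\hat{\phi}_1=\hat{\phi}_2=0$; both follow from the bidegree count already performed in the proof of Proposition~\ref{thm:twist}.
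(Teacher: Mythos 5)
Your proposal is correct and follows essentially the same route as the paper: apply Proposition~\ref{thm:twist} with $\hat{\phi}_1=\hat{\phi}_2=0$, observe that twilledness of the twisted structure reduces to $\hat{\phi}_2^{H}=0$, and identify $\hat{\phi}_2^{H}=l_1(H)+\frac{1}{2}l_2(H,H)+\frac{1}{6}l_3(H,H,H)$ with the left-hand side of the Maurer--Cartan equation. Your write-up is in fact slightly more careful than the paper's, in that you explicitly note $\hat{\phi}_1^{H}=0$ is automatic and that the lift map is injective, both of which the paper leaves implicit.
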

\begin{proof}
By Proposition \ref{thm:twist},  the twisting have the form:
\begin{eqnarray*}
\label{quasi-twisting-1}\hat{\mu}_1^{H}&=&\hat{\mu}_1,\\
\label{quasi-twisting-2}\hat{\psi}^{H}&=&\hat{\psi}+[\hat{\mu}_1,\hat{H}]_{\Li},\\
\label{quasi-twisting-3}\hat{\mu}_2^{H}&=&\hat{\mu}_2+[\hat{\psi},\hat{H}]_{\Li}+\half[[\hat{\mu}_1,\hat{H}]_{\Li},\hat{H}]_{\Li},\\
\label{quasi-twisting-4}\hat{\phi}_2^{H}&=&[\hat{\mu}_2,\hat{H}]_{\Li}+\half[[\hat{\psi},\hat{H}]_{\Li},\hat{H}]_{\Li}+\frac{1}{6}[[[\hat{\mu}_1,\hat{H}]_{\Li},\hat{H}]_{\Li},\hat{H}]_{\Li}.
\end{eqnarray*}
Thus, $((\huaG,\Omega^{H}),\g_1,\g_2)$ is  also a twilled $3$-Lie algebra if and only if $\hat{\phi}_2^{H}=0$, which implies that $H$ is a Maurer-Cartan element of the $L_\infty$-algebra $(\oplus_{m\geq0}C^m_{\Li}(\g_2,\g_1),l_1,l_2,l_3)$.
\end{proof}

\begin{cor}\label{Twisting-of-Twilled-3-Lie}
Let $(\huaG,\g_1,\g_2)$ be a twilled $3$-Lie algebra and $H:\g_2\longrightarrow\g_1$ a Maurer-Cartan element of the associated  $L_\infty$-algebra given in Theorem \ref{quasi-as-shLie}. Then
\begin{eqnarray*}
[u,v,w]_{H}:&=&[u,v,w]_2+[H(u),v,w]_2+[u,H(v),w]_2+[u,v,H(w)]_2\\
&&+[H(u),H(v),w]_2+[H(u),v,H(w)]_2+[u,H(v),H(w)]_2,\quad\forall u,v,w\in\g_2
\end{eqnarray*}
defines a $3$-Lie algebra structure on $\g_2$.
\end{cor}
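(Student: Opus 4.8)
\emph{Proof proposal.}
The plan is to obtain the corollary as a direct consequence of Theorem~\ref{Twilled-3-Lie} together with an explicit identification of the restricted bracket. First I would invoke Theorem~\ref{Twilled-3-Lie}: since $H$ is a Maurer-Cartan element of the $L_\infty$-algebra of Theorem~\ref{quasi-as-shLie}, the triple $((\huaG,\Omega^{H}),\g_1,\g_2)$ is again a twilled $3$-Lie algebra, equivalently $\hat{\phi}_2^{H}=0$. By the definition of a twilled $3$-Lie algebra this means $\g_2$ is a subalgebra of $(\huaG,\Omega^{H})$; hence the restriction of the $3$-Lie bracket $\Omega^{H}$ to $\wedge^3\g_2$ lands in $\g_2$ and endows $\g_2$ with a $3$-Lie algebra structure, because $\Omega^{H}$ satisfies the Fundamental Identity on $\huaG$ and $\g_2$ is closed under it. It therefore only remains to check that this restricted bracket is exactly the operation $[\cdot,\cdot,\cdot]_{H}$ in the statement.

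To compute the restriction I would use the closed formula $\Omega^{H}=e^{-\hat{H}}\circ\Omega\circ(e^{\hat{H}}\otimes e^{\hat{H}}\otimes e^{\hat{H}})$ established before Corollary~\ref{twisting-isomorphism}, rather than expanding \eqref{twisting-4} term by term, since $\hat{H}\circ\hat{H}=0$ makes the exponential route very short. Because $\hat{H}(x,u)=(H(u),0)$, we have $e^{\hat{H}}=\Id+\hat{H}$ and thus $e^{\hat{H}}(0,u)=(H(u),u)$ for $u\in\g_2$. Expanding $\Omega\big((H(u),u),(H(v),v),(H(w),w)\big)=[(H(u),u),(H(v),v),(H(w),w)]_{\huaG}$ by multilinearity into the eight brackets obtained by choosing in each slot either the $\g_1$-entry $H(\cdot)$ or the $\g_2$-entry, and then applying $e^{-\hat{H}}=\Id-\hat{H}$ (which modifies only the $\g_1$-component and fixes the $\g_2$-component), I get that the $\g_2$-component of $\Omega^{H}\big((0,u),(0,v),(0,w)\big)$ equals the sum of the $\g_2$-projections of those eight brackets.

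Finally I would simplify that sum: the term $[H(u),H(v),H(w)]_{\huaG}$ lies in $\g_1$ because $\g_1$ is a subalgebra, so its $\g_2$-component vanishes, while the remaining seven $\g_2$-projections are precisely $[u,v,w]_2$, $[H(u),v,w]_2$, $[u,H(v),w]_2$, $[u,v,H(w)]_2$, $[H(u),H(v),w]_2$, $[H(u),v,H(w)]_2$ and $[u,H(v),H(w)]_2$, whose sum is $[u,v,w]_{H}$. Since $\hat{\phi}_2^{H}=0$ forces the $\g_1$-component of $\Omega^{H}\big((0,u),(0,v),(0,w)\big)$ to vanish, we conclude $\Omega^{H}\big((0,u),(0,v),(0,w)\big)=(0,[u,v,w]_{H})$, so $[\cdot,\cdot,\cdot]_{H}$ is the subalgebra structure on $\g_2$ and is therefore a $3$-Lie bracket. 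The only point needing care is the bookkeeping of which summand of the exponential contributes to which bidegree component, but the relations $\hat{H}\circ\hat{H}=0$ and $||\hat{H}||=-1|1$ make this transparent, so I expect no real obstacle; alternatively one may verify $\hat{\mu}_2^{H}|_{\wedge^3\g_2}=[\cdot,\cdot,\cdot]_{H}$ directly from \eqref{twisting-4}, at the cost of a longer computation.
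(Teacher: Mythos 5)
Your proposal is correct and follows essentially the same route as the paper: both rest on the fact that the twisted structure $((\huaG,\Omega^{H}),\g_1,\g_2)$ is again a twilled $3$-Lie algebra, so that the bracket restricts to $\g_2$ (the paper phrases this as $[\hat{\mu}_2^{H},\hat{\mu}_2^{H}]_{\Li}=0$ together with $\|\hat{\mu}_2^{H}\|=0|2$, which is the same statement as your ``$\g_2$ is a subalgebra of $(\huaG,\Omega^{H})$''). Your explicit identification of the restricted bracket via $\Omega^{H}=e^{-\hat{H}}\circ\Omega\circ(e^{\hat{H}}\otimes e^{\hat{H}}\otimes e^{\hat{H}})$, including the observation that $[H(u),H(v),H(w)]_2=0$ because $\g_1$ is a subalgebra, is a clean verification of the formula that the paper leaves implicit.
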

\begin{proof}
By Corollary \ref{lem:twilled-t}, we deduce that $\hat{\mu}_2^{H}$ is a $3$-Lie algebra structure on $\huaG$. Since $||\hat{\mu}_2^{H}||=0|2$, we obtain that $\hat{\mu}_2^{H}$ gives rise to a $3$-Lie algebra structure on $\g_2$ as above.
\end{proof}

 \section{$\huaO$-operators and twilled $3$-Lie algebras} \label{sec:OT}

In this section, we use   \Ops~to construct   twilled-$3$-Lie algebras. In particular, we give examples of twilled 3-Lie algebras, which is not a generalized matched pair of 3-Lie algebras, to illustrate the differences between Lie algebras and 3-Lie algebras.

\begin{defi}{\rm (\cite{BGS-3-Bialgebras})}
Let $(\g,[\cdot,\cdot,\cdot]_{\g})$ be a $3$-Lie algebra and $(V;\rho)$ a representation of $\g$. A linear operator $T:V\rightarrow\g$ is called an {\bf \Op}~ on $(\g,[\cdot,\cdot,\cdot]_{\g})$ with respect to $(V;\rho)$ if $T$ satisfies
\begin{eqnarray}\label{eq-o-operator}
[Tu,Tv,Tw]_\g=T(\rho(Tu,Tv)w+\rho(Tv,Tw)u+\rho(Tw,Tu)v),\quad \forall u, v, w\in V.
\end{eqnarray}
\end{defi}

  Let $(\g,[\cdot,\cdot,\cdot]_\g)$ be a $3$-Lie algebra and $(V;\rho)$ its representation. Then $(\g\oplus V,[\cdot,\cdot,\cdot]_\ltimes)$ is a $3$-Lie algebra, where $[\cdot,\cdot,\cdot]_\ltimes$ is given by
   $$
   [x+u,y+v,z+w]_\ltimes=[x,y,z]_\g+\rho(x,y)w+\rho(y,z)u+\rho(z,x)v.
   $$
   This $3$-Lie algebra is called
   the semi-direct product $3$-Lie algebra and denoted by $\g\ltimes V$. It is obvious that $(\g\ltimes V,\g,V)$ is a strict twilled $3$-Lie algebra. We also use $\Omega $ to denote the $3$-Lie bracket $[\cdot,\cdot,\cdot]_\ltimes$. In this special case, $\Omega $ only contains $\hat{\mu}_1$, whose bidegree is $2|0$.

\begin{rmk}\label{rmk:OandMC}
   In \cite{THS}, a Lie $3$-algebra is constructed to characterize $\huaO$-operators (also called relative Rota-Baxter operator) on $3$-Lie algebras  as Maurer-Cartan elements. More precisely, $T:V\rightarrow\g$ is an {\Op}~ if and only if $$ [[[\hat{\mu}_1,\hat{T}]_{\Li},\hat{T}]_{\Li},\hat{T}]_{\Li}=0.$$
   It is straightforward to see that the $L_\infty$-algebra given in Theorem  \ref{quasi-as-shLie} reduces to the Lie $3$-algebra given in \cite{THS} when we consider the twilled $3$-Lie algebra $(\g\ltimes V,\g,V)$.
\end{rmk}

\Ops~ can also be characterized by twilled $3$-Lie algebras as follows.

\begin{thm}\label{Twilled-strong-3-Lie}
Let $(\g,[\cdot,\cdot,\cdot]_{\g})$ be a $3$-Lie algebra and $(V;\rho)$ a representation of $\g$. Then a linear map $T:V\rightarrow\g$ is an \Op ~ if and only if  $((\g\oplus V,\Omega^{T}),\g,V)$ is a twilled $3$-Lie algebra.
\end{thm}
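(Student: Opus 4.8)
The plan is to apply the twisting machinery developed in this section to the special twilled $3$-Lie algebra $(\g\ltimes V,\g,V)$ and read off the $\huaO$-operator condition from the vanishing of a specific structure component. First I would record that for the semidirect product $\g\ltimes V$ the only nonzero piece of $\Omega$ in the decomposition of Lemma \ref{lem:dec} is $\hat{\mu}_1$ (of bidegree $2|0$), while $\hat{\phi}_1=\hat{\psi}=\hat{\mu}_2=\hat{\phi}_2=0$; here the role of $\g_1$ is played by $\g$ and that of $\g_2$ by $V$, and the lift $\hat{T}$ of $T\colon V\to\g$ has bidegree $-1|1$, exactly the type of lift $\hat{H}$ used in the twisting construction.

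Next I would invoke Proposition \ref{thm:twist} with $H=T$. Since all of $\hat{\phi}_1,\hat{\psi},\hat{\mu}_2,\hat{\phi}_2$ vanish, formulas \eqref{twisting-1}--\eqref{twisting-5} collapse to
\begin{eqnarray*}
\hat{\phi}_1^{T}&=&0,\\
\hat{\mu}_1^{T}&=&\hat{\mu}_1,\\
\hat{\psi}^{T}&=&[\hat{\mu}_1,\hat{T}]_{\Li},\\
\hat{\mu}_2^{T}&=&\half[[\hat{\mu}_1,\hat{T}]_{\Li},\hat{T}]_{\Li},\\
\hat{\phi}_2^{T}&=&\frac{1}{6}[[[\hat{\mu}_1,\hat{T}]_{\Li},\hat{T}]_{\Li},\hat{T}]_{\Li}.
\end{eqnarray*}
By definition, $((\g\oplus V,\Omega^{T}),\g,V)$ is a twilled $3$-Lie algebra precisely when $\hat{\phi}_1^{T}=\hat{\phi}_2^{T}=0$; the first holds automatically, so the condition is $[[[\hat{\mu}_1,\hat{T}]_{\Li},\hat{T}]_{\Li},\hat{T}]_{\Li}=0$. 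By Remark \ref{rmk:OandMC} this is exactly the statement that $T$ is an $\huaO$-operator, which finishes the proof. (One should note that $\Omega^{T}$ is automatically a $3$-Lie algebra structure by the Proposition preceding Corollary \ref{twisting-isomorphism}, so the only content of being \emph{twilled} is the vanishing of the two $\hat{\phi}$-components.)

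The only real point requiring care — and the step I expect to be the main obstacle if one wants a self-contained argument rather than citing Remark \ref{rmk:OandMC} — is verifying that the cochain identity $[[[\hat{\mu}_1,\hat{T}]_{\Li},\hat{T}]_{\Li},\hat{T}]_{\Li}=0$, when evaluated on elements of $V^{\wedge 3}$ and projected to $\g$, unwinds to the $\huaO$-operator equation \eqref{eq-o-operator}. This is a bookkeeping computation: using the bidegree calculus (Lemmas \ref{important-lemma-1} and \ref{important-lemma-2}) one checks that the triple bracket is homogeneous of bidegree $-1|3$, hence determined by its restriction to $\wedge^2 V\wedge V$ landing in $\g$; then expanding $\hat{\mu}_1(x,y,w)=\rho(x,y)w$ and $\hat{\mu}_1(x,y,z)=[x,y,z]_\g$ and tracking the three $\hat{T}$-insertions reproduces, up to the overall factor $6$ and the cyclic symmetrization built into the lifts, the two sides of \eqref{eq-o-operator}. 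Since the paper has already established this correspondence in \cite{THS} and restated it in Remark \ref{rmk:OandMC}, I would simply cite it and avoid grinding through the symmetrization.
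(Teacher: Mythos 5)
Your proposal is correct and follows essentially the same route as the paper: both apply Proposition \ref{thm:twist} to the semidirect product structure $\Omega=\hat{\mu}_1$, observe that the twisting formulas collapse so that the only obstruction to being twilled is $\hat{\phi}_2^{T}=\frac{1}{6}[[[\hat{\mu}_1,\hat{T}]_{\Li},\hat{T}]_{\Li},\hat{T}]_{\Li}$, and identify its vanishing with the $\huaO$-operator condition via the Maurer--Cartan characterization from \cite{THS} recalled in Remark \ref{rmk:OandMC}. Your extra remarks (that $\hat{\phi}_1^{T}=0$ automatically and that $\Omega^{T}$ is already known to be a $3$-Lie structure) are accurate and only make the argument slightly more explicit than the paper's.
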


\begin{proof}
By Proposition  \ref{thm:twist},  the twisting
$\Omega^{T}$ have the form:
\begin{eqnarray*}
\hat{\mu}_1^{T}&=&\hat{\mu}_1,\\
\hat{\psi}^{T}&=&[\hat{\mu}_1,\hat{T}]_{\Li},\\
\hat{\mu}_2^{T}&=&\half[[\hat{\mu}_1,\hat{T}]_{\Li},\hat{T}]_{\Li},\\
\hat{\phi}_2^{T}&=&\frac{1}{6}[[[\hat{\mu}_1,\hat{T}]_{\Li},\hat{T}]_{\Li},\hat{T}]_{\Li}.
\end{eqnarray*}
Therefore,  $((\g\oplus V,\Omega^{T}),\g,V)$ is a twilled $3$-Lie algebra if and only if
$\hat{\phi}_2^{T}=0,$ which is equivalent to that $T$ is an \Op.
\end{proof}

\begin{cor}
Let $T:V\rightarrow\g$ be an \Op~ on a $3$-Lie algebra $(\g,[\cdot,\cdot,\cdot]_{\g})$ with respect to a representation $(V;\rho)$.
Then the $\hat{\mu}_2^{T}$ defines a $3$-Lie algebra structure $[\cdot,\cdot,\cdot]_{V_{T}}$ on $V$ as well as a representation $\varrho:\wedge^{2}V\rightarrow \gl(\g)$ of the $3$-Lie algebra $(V,[\cdot,\cdot,\cdot]_{V_{T}})$ on the vector space $\g$ as follows
 \begin{eqnarray}
 [u,v,w]_{V_{T}}&=&\rho(Tu,Tv)w+\rho(Tv,Tw)u+\rho(Tw,Tu)v, \quad\forall u,v,w\in V,\\
  \varrho(u,v)x&=&[Tu,Tv,x]_{\g}-T(\rho(x,Tu)v)-T(\rho(Tv,x)u),\quad \forall x\in \g.
\end{eqnarray}
\end{cor}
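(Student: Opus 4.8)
The plan is to extract everything from the identity $\hat{\mu}_2^{T}$ established in Theorem \ref{Twilled-strong-3-Lie}, since by Corollary \ref{Twisting-of-Twilled-3-Lie} (applied to the twilled $3$-Lie algebra $(\g\oplus V,\Omega^T)$ with the roles of $\g_1,\g_2$ played by $\g, V$), the map $\hat{\mu}_2^{T}$ is already known to define a $3$-Lie algebra structure on $V$. So the first step is purely bookkeeping: compute $\hat{\mu}_2^{T}=\half[[\hat{\mu}_1,\hat{T}]_{\Li},\hat{T}]_{\Li}$ explicitly on $(u,v,w)\in V^{\wedge 3}$ by unwinding the graded commutator bracket \eqref{3-Lie-bracket} and the definition of the lift $\hat{T}$ with $\hat{T}(x,v)=(T(v),0)$ and $\hat{T}\circ\hat{T}=0$, exactly as in the proof of the twisting lemma. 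This produces $[u,v,w]_{V_T}=\rho(Tu,Tv)w+\rho(Tv,Tw)u+\rho(Tw,Tu)v$ after projecting to $V$; that it is a $3$-Lie bracket is then immediate from Corollary \ref{Twisting-of-Twilled-3-Lie} (or, if one prefers a direct argument, from $[\hat{\mu}_2^T,\hat{\mu}_2^T]_{\Li}=0$, which follows from \eqref{twilled-5} since $(\g\oplus V,\Omega^T)$ is twilled once $T$ is an $\huaO$-operator).

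Next I would identify the representation. In the decomposition $\Omega^T = \hat\mu_1^T + \hat\psi^T + \hat\mu_2^T + \hat\phi_2^T$ (there is no $\hat\phi_1$ term here since $\Omega$ contained only $\hat\mu_1$), the piece $\hat\psi^T=[\hat\mu_1,\hat T]_{\Li}$ has bidegree $1|1$; projecting the mixed bracket $[\,(u,0),(v,0),(x,0)\,]$ under $\Omega^T$ onto the $\g$-component gives a map $\varrho:\wedge^2 V\to\gl(\g)$. Concretely one computes $[\hat\mu_1,\hat T]_{\Li}$ on $(u,v,x)$ with two $V$-entries and one $\g$-entry, obtaining $\varrho(u,v)x=[Tu,Tv,x]_{\g}-T(\rho(x,Tu)v)-T(\rho(Tv,x)u)$ after collecting the four terms of $\hat\mu_1\circ\hat T$ and $\hat T\circ\hat\mu_1$ and using skew-symmetry of $\rho$ in its two arguments. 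To see that $(\g;\varrho)$ is a representation of $(V,[\cdot,\cdot,\cdot]_{V_T})$, I would invoke the structure equations: since $(\g\oplus V,\Omega^T)$ is a twilled $3$-Lie algebra, Corollary \ref{lem:twilled-t} gives $[\hat\psi^T,\hat\mu_2^T]_{\Li}=0$ and $\frac12[\hat\psi^T,\hat\psi^T]_{\Li}+[\hat\mu_1^T,\hat\mu_2^T]_{\Li}=0$ and $[\hat\psi^T,\hat\mu_1^T]_{\Li}=0$; evaluating these on strings with the appropriate numbers of $\g$- and $V$-arguments yields precisely the representation axioms \eqref{representation-1} and \eqref{representation-2} for $\varrho$ with respect to $[\cdot,\cdot,\cdot]_{V_T}$. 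This is the conceptual heart of the argument — it is the general fact that in any twilled $3$-Lie algebra $(\huaG,\g_1,\g_2)$, the bidegree-$1|1$ component $\hat\psi$, when $\hat\mu_i$ are subalgebra brackets, encodes a pair of mutual (generalized) actions — specialized to the present situation where $\g$ remains a subalgebra with its original bracket.

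The main obstacle I anticipate is the sign and shuffle bookkeeping in unwinding $[[\hat\mu_1,\hat T]_{\Li},\hat T]_{\Li}$ and in matching the evaluated structure equations to the stated formulas for $[\cdot,\cdot,\cdot]_{V_T}$ and $\varrho$; in particular one must be careful about which shuffle terms survive given that $\hat T$ raises the $\g$-count by one and kills the $V$-component of its output, and about the antisymmetrization built into the lifts $\hat\rho$, $\hat\varrho$ (cf. \eqref{semidirect-2}). A clean way to sidestep most of this is to argue entirely at the level of $\Omega^T$: one already knows from Corollary \ref{Twisting-of-Twilled-3-Lie} that $\hat\mu_2^T$ restricts to a $3$-Lie bracket on $V$, and one simply reads off from \eqref{bracket-3} that the $\g_1$-valued part of $\hat\psi^T$ is, by definition of a representation occurring inside a $3$-Lie bracket, a representation of that bracket on $\g$; the only computation that genuinely must be done by hand is verifying the two closed-form expressions, which is the routine unwinding described above. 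I would therefore present the proof as: (1) compute $\hat\mu_2^T$ and $\hat\psi^T$ explicitly to get the two displayed formulas; (2) cite Corollary \ref{Twisting-of-Twilled-3-Lie} for the $3$-Lie structure on $V$; (3) cite Corollary \ref{lem:twilled-t} (equations \eqref{twilled-2} and \eqref{twilled-3}) to conclude $\varrho$ is a representation.
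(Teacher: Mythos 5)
Your first step (the bracket $[\cdot,\cdot,\cdot]_{V_T}$ on $V$ via Corollary \ref{Twisting-of-Twilled-3-Lie} and the explicit computation of $\half[[\hat{\mu}_1,\hat{T}]_{\Li},\hat{T}]_{\Li}$ on $\wedge^3 V$) is correct and is exactly what the paper does. The problem is in your identification of $\varrho$.

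You attribute $\varrho$ to the bidegree-$1|1$ component $\hat{\psi}^{T}=[\hat{\mu}_1,\hat{T}]_{\Li}$, but a map $\wedge^{2}V\otimes\g\rightarrow\g$ takes input in $\g^{1,2}$ and has output in $\g_1$, so by Definition \ref{Bidegree} it must have bidegree $0|2$: it is a piece of $\hat{\mu}_2^{T}$, not of $\hat{\psi}^{T}$. (This is also what the statement of the corollary itself asserts: \emph{both} structures come from $\hat{\mu}_2^{T}$, cf.\ \eqref{bracket-4}, whose $\g_1$-valued part on two $\g_2$-entries and one $\g_1$-entry is precisely the $\varrho$-type action.) Concretely, evaluating $[\hat{\mu}_1,\hat{T}]_{\Li}$ on $\bigl((0,u),(0,v),(x,0)\bigr)$ gives $\bigl(0,\rho(x,Tu)v+\rho(Tv,x)u\bigr)$, which lies in $V$ — this is the map $\nu$ of Corollary \ref{Twilled-Multiplication-Omega} — so its projection to $\g$ vanishes and cannot produce the formula $[Tu,Tv,x]_{\g}-T(\rho(x,Tu)v)-T(\rho(Tv,x)u)$. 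The correct source of that formula is $\varrho(u,v)x=\hat{\mu}_2^{T}(u,v,x)=\half[[\hat{\mu}_1,\hat{T}]_{\Li},\hat{T}]_{\Li}(u,v,x)$, i.e.\ one more bracket with $\hat{T}$ and the factor $\half$.

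This error propagates into your step (3): the structure equations $[\hat{\psi}^{T},\hat{\mu}_2^{T}]_{\Li}=0$, etc., are not what yields the representation property of $\varrho$. The right (and shorter) argument is the one implicit in the paper's proof of the correspondence between strict twilled $3$-Lie algebras and matched pairs: since $((\g\oplus V,\Omega^{T}),\g,V)$ is twilled, \eqref{twilled-5} gives $[\hat{\mu}_2^{T},\hat{\mu}_2^{T}]_{\Li}=0$; because $\|\hat{\mu}_2^{T}\|=0|2$, this single Maurer--Cartan equation says that $\hat{\mu}_2^{T}$ is a $3$-Lie structure on $\g\oplus V$ of semidirect-product type with the roles of the two summands interchanged, which is exactly the conjunction of ``$[\cdot,\cdot,\cdot]_{V_T}$ is a $3$-Lie bracket on $V$'' and ``$\varrho$ is a representation of $(V,[\cdot,\cdot,\cdot]_{V_T})$ on $\g$''. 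You should replace your bidegree-$1|1$ discussion with this.
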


\begin{proof}
By Corollary \ref{Twisting-of-Twilled-3-Lie} and Proposition \ref{Twilled-strong-3-Lie},
 we can obtain that $\hat{\mu}_2^{T}$ can give a $3$-Lie algebra structure on $V$, where for all $u,v,w\in V,$
 \begin{eqnarray*}
  [u,v,w]_{V_{T}}=\half[[\hat{\mu}_1,\hat{T}]_{\Li},\hat{T}]_{\Li}(u,v,w)=\rho(Tu,Tv)w+\rho(Tv,Tw)u+\rho(Tw,Tu)v,
 \end{eqnarray*}
 and $\varrho(u,v)x:=\hat{\mu}_2^{T}(u,v,x)$ is a representation of $(V,[\cdot,\cdot,\cdot]_{V_{T}})$ on the vector space $\g$, where
 \begin{eqnarray*}
\varrho(u,v)x=\half[[\hat{\mu}_1,\hat{T}]_{\Li},\hat{T}]_{\Li}(u,v,x)=[Tu,Tv,x]_{\g}-T(\rho(x,Tu)v)-T(\rho(Tv,x)u).
\end{eqnarray*}
The proof is finished.
\end{proof}

We further give the full description of the twilled 3-Lie algebra structure $\Omega^{T}$. For this purpose, we define $\nu:\g\lon\Hom(\wedge^2V,V)$ and $\tau:V\lon\Hom(\wedge^2\g,\g)$ by
\begin{eqnarray*}
\nu(x)(v,w)&=&\rho(Tw,x)v+\rho(x,Tv)w,\\
\tau(w)(x,y)&=&[Tw,x,y]_{\g}-T(\rho(x,y)w).
\end{eqnarray*}

\begin{cor}\label{Twilled-Multiplication-Omega}
With the above notations, we have
   \begin{eqnarray*}\label{Twilled-3-Lie-multiplication}
\Omega^{T}((x,u),(y,v),(z,w))&=&[x,y,z]_{\g}+\rho(x,y)w+\rho(y,z)u+\rho(z,x)v\\
\nonumber&&+\nu(x)(v,w)+\nu(y)(w,u)+\nu(z)(u,v)\\
\nonumber&&+ [u,v,w]_{V_{T}}+\varrho(u,v)x+\varrho(v,w)x+\varrho(w,u)y\\
\nonumber&&+\tau(w)(x,y)+\tau(u)(y,z)+\tau(v)(z,x).
\end{eqnarray*}
\end{cor}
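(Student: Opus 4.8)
The plan is to compute $\Omega^{T}=\hat{\mu}_1^{T}+\hat{\psi}^{T}+\hat{\mu}_2^{T}+\hat{\phi}_2^{T}$ explicitly by evaluating each of the four homogeneous pieces on a general triple $((x,u),(y,v),(z,w))\in(\g\oplus V)^{\times 3}$, using the formulas for the twisting given in the proof of Theorem \ref{Twilled-strong-3-Lie}. Since $T$ is an \Op, the bidegree $-1|3$ piece $\hat{\phi}_2^{T}=\frac{1}{6}[[[\hat{\mu}_1,\hat{T}]_{\Li},\hat{T}]_{\Li},\hat{T}]_{\Li}$ vanishes, so only $\hat{\mu}_1^{T}=\hat{\mu}_1$, $\hat{\psi}^{T}=[\hat{\mu}_1,\hat{T}]_{\Li}$ and $\hat{\mu}_2^{T}=\frac{1}{2}[[\hat{\mu}_1,\hat{T}]_{\Li},\hat{T}]_{\Li}$ contribute. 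The term $\hat{\mu}_1$ is just the semidirect product bracket $[x,y,z]_{\g}+\rho(x,y)w+\rho(y,z)u+\rho(z,x)v$ by \eqref{bracket-2}, so the first line of the claimed formula is immediate.

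Next I would identify the middle two lines. The piece $\hat{\mu}_2^{T}=\frac{1}{2}[[\hat{\mu}_1,\hat{T}]_{\Li},\hat{T}]_{\Li}$ was already computed in the preceding corollary: restricted to $\wedge^3 V$ it gives $[u,v,w]_{V_T}$ and its $\Hom(\wedge^2 V\otimes \g,\g)$-component gives $\varrho$. One checks by the bidegree $0|2$ bookkeeping (Lemma \ref{important-lemma-2}) that these are the only nonzero components, producing the third line $[u,v,w]_{V_T}+\varrho(u,v)x+\varrho(v,w)x+\varrho(w,u)y$ (with the appropriate cyclic placement of the $\g_1$-arguments dictated by \eqref{semidirect-2}-style skew-symmetrization). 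For $\hat{\psi}^{T}=[\hat{\mu}_1,\hat{T}]_{\Li}$, which has bidegree $1|1$, there are two nonzero components: one in $\Hom(\g\otimes\g\otimes V,V)$ and one in $\Hom(V\otimes V\otimes\g,\g)$. A direct evaluation of $\hat{\mu}_1\circ\hat{T}-\hat{T}\circ\hat{\mu}_1$ on $((x,u),(y,v),(0,w))$ gives $\rho(x,Tw)v+\rho(Tw,y)u$ plus (after the cyclic sum built into the lift) exactly the map $\nu$ as defined before the statement; evaluating on $((x,u),(0,v),(0,w))$ gives $[Tv,Tw,x]_\g - T(\rho(x,?)?)$ terms matching $\tau$. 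Assembling with the correct cyclic/skew positions yields the second line $\nu(x)(v,w)+\nu(y)(w,u)+\nu(z)(u,v)$ and the fourth line $\tau(w)(x,y)+\tau(u)(y,z)+\tau(v)(z,x)$.

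The routine but error-prone part — and the main obstacle — is bookkeeping the signs and the cyclic placement of arguments coming from the definition of the graded commutator bracket $[\cdot,\cdot]_{\Li}$ and of the lift $\hat{(\cdot)}$ in \eqref{semidirect-1}-\eqref{semidirect-2}: each lift of a map with one $\g_2$-slot symmetrizes over three cyclic positions, and the circle product $\circ$ introduces shuffle signs, so one must be careful that the final expression is genuinely skew-symmetric and that the $\nu$, $\tau$ terms land in the cyclic pattern written in the statement rather than a permuted one. In practice I would verify the formula by testing it on the three representative argument types — all in $\g$, two in $\g$ one in $V$, one in $\g$ two in $V$, all in $V$ — since bidegree considerations guarantee these determine $\Omega^T$ completely, and then simply record the result. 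No deep idea is needed beyond what Theorem \ref{Twilled-strong-3-Lie} and the previous corollary already provide; the proof is essentially ``collect the pieces,'' so I would state it as such: it follows by a direct computation from Proposition \ref{thm:twist}, the definitions of $\nu$, $\tau$, $\varrho$ and $[\cdot,\cdot,\cdot]_{V_T}$, and the vanishing of $\hat{\phi}_2^{T}$.
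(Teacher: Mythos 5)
Your proposal is correct and follows the same route the paper intends: the corollary is a direct ``collect the pieces'' computation from Proposition \ref{thm:twist}, the vanishing of $\hat{\phi}_2^{T}$, and the identification of $\hat{\mu}_1$, $\hat{\psi}^{T}=[\hat{\mu}_1,\hat{T}]_{\Li}$ and $\hat{\mu}_2^{T}=\frac{1}{2}[[\hat{\mu}_1,\hat{T}]_{\Li},\hat{T}]_{\Li}$ with the semidirect product bracket, the pair $(\nu,\tau)$, and the pair $([\cdot,\cdot,\cdot]_{V_T},\varrho)$ respectively (the paper itself records no further argument). One small slip: the two nonzero components of the bidegree $1|1$ map $\hat{\psi}^{T}$ live in $\Hom(\wedge^2\g\otimes V,\g)$ (this is $\tau$) and $\Hom(\g\otimes\wedge^2 V,V)$ (this is $\nu$), not in the spaces $\Hom(\g\otimes\g\otimes V,V)$ and $\Hom(V\otimes V\otimes\g,\g)$ you named --- those are exactly the components a bidegree $1|1$ map kills --- but your subsequent concrete evaluations assign $\nu$ and $\tau$ to the correct slots, so this does not affect the argument.
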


\begin{rmk}
  Even though $\Omega^T$ is a twilled $3$-Lie algebra, however $(V;\rho,\nu)$ is not a generalized representation of $\g$ and $(\g;\varrho,\tau)$ is not a   generalized representation of $(V, [\cdot,\cdot,\cdot]_{V_{T}})$. Thus, $(\g,V;\rho,\nu,\varrho,\tau)$ is not a generalized matched pair of $3$-Lie algebras.
\end{rmk}

\begin{ex}{\rm
Consider the $3$-dimensional $3$-Lie algebra $(\g,[\cdot,\cdot,\cdot]_{\g})$ given with respect to a basis $\{e_1,e_2,e_3\}$  by
$$[e_1,e_2,e_3]_{\g}=e_1.$$
Thanks to Example 3.2 in {\rm \cite{THS}}, we obtain
\begin{eqnarray}\label{adjoint-representation-T-1}
T=\left(\begin{array}{ccc}
 1&0&0\\
 0&1&0\\
 0&0&-1
 \end{array}\right)
 \end{eqnarray}
 is an \Op~with respect to the adjoint representation.

By Proposition \ref{Twilled-strong-3-Lie}, $((\g\oplus \g',\Omega^{T}),\g,\g')$ is a $6$-dimensional $3$-Lie algebra, where $T$ is defined by \eqref{adjoint-representation-T-1} and $\g=\g'$.
 Let $\{\frke_1,\frke_2,\frke_3,\frke_4,\frke_5,\frke_6\}$ be the basis of $(\g\oplus \g',\Omega^{T})$, where
 $\frke_i=(e_i,0)\in \g, 1\leq i\leq 3$ and $\frke_j=(0,e_{j-3})\in \g', 4\leq j\leq 6.$
By Corollary \ref{Twilled-Multiplication-Omega},   the $3$-Lie bracket $\Omega^{T}$ is given by
\begin{alignat*}{4}
\Omega^{T}(\frke_1,\frke_2,\frke_3)&=\frke_1,&\quad\Omega^{T}(\frke_2,\frke_3,\frke_4)&=\frke_4,&\quad\Omega^{T}(\frke_1,\frke_2,\frke_6)&=-2\frke_1+\frke_4,&\quad \Omega^{T}(\frke_1,\frke_5,\frke_3)&=\frke_4,\\
\Omega^{T}(\frke_4,\frke_5,\frke_6)&=-\frke_4,&\quad
 \Omega^{T}(\frke_1,\frke_6,\frke_5)&=\frke_1,&\quad\Omega^{T}(\frke_3,\frke_4,\frke_5)&=2\frke_4-\frke_1,&\quad\Omega^{T}(\frke_4,\frke_2,\frke_6)&=-\frke_1.
\end{alignat*}
We only give the computation of the second equality. By Corollary \ref{Twilled-Multiplication-Omega}, we have
\begin{eqnarray*}
  \Omega^T(\frke_1,\frke_2,\frke_6)&=&\Omega^T((e_1,0),(e_2,0),(0,e_3))\\
                                   &=&(0,\rho(e_1,e_2)e_3)+(\tau(e_3)(e_1,e_2),0)\\
                                   &=&(0,e_1)+(-2e_1,0)\\
                                   &=&-2\frke_1+\frke_4.
\end{eqnarray*}
Using the notations in Corollary \ref{Twilled-Multiplication-Omega}, we have
\begin{eqnarray*}
  \rho(\frke_2,\frke_3)(\frke_4)=\frke_4,\quad \rho(\frke_1,\frke_2)(\frke_6)=\frke_4,\quad \rho(\frke_1,\frke_3)(\frke_5)=-\frke_4,\quad \nu(\frke_3)(\frke_4,\frke_5)&=2\frke_4
\end{eqnarray*}
and 
\begin{eqnarray*}
  \varrho(\frke_5,\frke_6)(\frke_1)=\frke_1,\quad \varrho(\frke_4,\frke_5)(\frke_3)=-\frke_1,\quad \varrho(\frke_4,\frke_6)(\frke_2)=\frke_1,\quad \tau(\frke_6)(\frke_1,\frke_2)&=-2\frke_1.
\end{eqnarray*}
It is straightforward to deduce that they do not give rise to a generalized matched pair.
}
\end{ex}

\begin{ex}{\rm
Consider the $4$-dimensional $3$-Lie algebra $(\g,[\cdot,\cdot,\cdot]_{\g})$ given with respect to a basis $\{e_1,e_2,e_3,e_4\}$  by
$$[e_2,e_3,e_4]_{\g}=e_1.$$
 Then $T=\left(\begin{array}{cccc}
 a_{11}&a_{12}&a_{13}&a_{14}\\
 a_{21}&a_{22}&a_{23}&a_{24}\\
 a_{31}&a_{32}&a_{33}&a_{34}\\
 a_{41}&a_{42}&a_{43}&a_{44}\\
 \end{array}\right)$ is an \Op~on $(\g,[\cdot,\cdot,\cdot]_{\g})$ with respect to the adjoint representation if and only if
$$
[Te_i,Te_j,Te_k]_{\g}=T([Te_i,Te_j,e_k]_{\g}+[Te_i,e_j,Te_k]_{\g}+[e_i,Te_j,Te_k]_{\g}),\quad\forall e_i,e_j,e_k\in\g.
$$
It is straightforward to deduce that $T$ is an \Op~if and only if
\begin{eqnarray*}
&&a_{22}a_{33}a_{44}-a_{22}a_{43}a_{34}-a_{23}a_{32}a_{44}+a_{24}a_{32}a_{43}+a_{23}a_{34}a_{42}-a_{24}a_{33}a_{42}\\
&=&(a_{22}a_{33}-a_{33}a_{32}-a_{43}a_{34}+a_{33}a_{44}-a_{24}a_{42}+a_{44}a_{22})a_{11}
\end{eqnarray*}
and
\begin{eqnarray*}
&&(a_{22}a_{33}-a_{33}a_{32}-a_{43}a_{34}+a_{33}a_{44}-a_{24}a_{42}+a_{44}a_{22})a_{21}\\
&=&(a_{22}a_{33}-a_{33}a_{32}-a_{43}a_{34}+a_{33}a_{44}-a_{24}a_{42}+a_{44}a_{22})a_{31}\\
&=&(a_{22}a_{33}-a_{33}a_{32}-a_{43}a_{34}+a_{33}a_{44}-a_{24}a_{42}+a_{44}a_{22})a_{41}\\
&=&0.
\end{eqnarray*}
In particular, we can obtain
\begin{eqnarray}\label{adjoint-representation-T-2}
T=\left(\begin{array}{cccc}
 1&0&0&0\\
 0&1&0&0\\
 0&0&1&0\\
 0&0&0&-1
 \end{array}\right)
 \end{eqnarray}
 is an \Op~with respect to the adjoint representation.

By Proposition \ref{Twilled-strong-3-Lie}, $((\g\oplus \g',\Omega^{T}),\g,\g)$ is a $8$-dimensional $3$-Lie algebra, where $T$ is defined by \eqref{adjoint-representation-T-2} and $\g=\g'$.
 Let $\{\frke_1,\frke_2,\frke_3,\frke_4,\frke_5,\frke_6,\frke_7,\frke_8\}$ be the basis of $(\g\oplus \g',\Omega^{T})$, where
 $\frke_i=(e_i,0)\in \g, 1\leq i\leq 4$ and $\frke_j=(0,e_{j-4})\in \g', 5\leq j\leq 8.$

By Corollary \ref{Twilled-Multiplication-Omega},   the $3$-Lie bracket $\Omega^{T}$ is given by
\begin{alignat*}{4}
 \Omega^{T}(\frke_2,\frke_3,\frke_4)&=\frke_1,&\quad\Omega^{T}(\frke_2,\frke_3,\frke_8)&=-2\frke_1+\frke_5,&\quad\Omega^{T}(\frke_2,\frke_7,\frke_4)&=\frke_5,&\quad\Omega^{T}(\frke_6,\frke_3,\frke_4)&=\frke_5,\\
 \Omega^{T}(\frke_6,\frke_7,\frke_8)&=-\frke_5,&\quad \Omega^{T}(\frke_6,\frke_7,\frke_4)&=2\frke_5-\frke_1,& \quad
 \Omega^{T}(\frke_3,\frke_6,\frke_8)&=\frke_1,&\quad \Omega^{T}(\frke_2,\frke_7,\frke_8)&=-\frke_1.
\end{alignat*}
}
\end{ex}

 \end{document}